\newtheorem{theorem}{Theorem}[section]
\newtheorem{prop}[theorem]{Proposition}
\newtheorem{lemma}[theorem]{Lemma}
\newtheorem{remark}[theorem]{Remark}
\newtheorem{question}[theorem]{Question}
\newtheorem{definition}[theorem]{Definition}
\newtheorem{cor}[theorem]{Corollary}
\newtheorem{example}[theorem]{Example}
\begin{document}

\title{$J-$holomorphic  curves in a nef class}
\author{Tian-Jun Li }
\address{School  of Mathematics\\  University of Minnesota\\ Minneapolis, MN
55455}
\email{tjli@math.umn.edu}
\author{Weiyi Zhang}
\address{Mathematics Institute\\  University of Warwick\\ Coventry, CV4 7AL, England}
\email{weiyi.zhang@warwick.ac.uk}

\begin{abstract} Taubes established fundamental properties of  $J-$holomorphic subvarieties in dimension $4$ in \cite{T1}. 
In this paper, we further investigate  properties  of reducible $J-$holomorphic  subvarieties. We offer an upper  bound of the  total genus of  a subvariety when the class of the subvariety  is $J-$nef. 
For a spherical class, it has particularly strong consequences. 
It is shown that, for any tamed $J$, each irreducible component  is a smooth rational curve. It might be even new when $J$ is integrable.
We also  completely classify  configurations of maximal dimension. 
 To prove these results we treat subvarieties as weighted graphs and introduce  
several combinatorial moves.
\end{abstract}
 \maketitle

\tableofcontents
\section{Introduction}
Let $(M, J)$ be a closed, almost complex $4-$manifold. 
In this paper we study properties  of reducible $J-$holomorphic subvarieties in  $M$. 
Here $J$ is not always assumed to be  tamed.

\begin{definition} \label{subvariety} A closed set $C\subset M$ with finite, nonzero 2-dimensional
Hausdorff measure is said to be an irreducible  $J-$holomorphic subvariety if it has
no isolated points, and if the complement of a finite set of points
in $C$, called the singular points,  is a connected smooth submanifold with $J-$invariant tangent space.

A $J-$holomorphic subvariety $\Theta$ is a finite set of pairs $\{(C_i, m_i), 1\leq i\leq n\}$, where each  $C_i$ is irreducible 
$J-$holomorphic subvariety and each $m_i$ is a non-negative integer. 
The set of pairs is further constrained so that $C_i\ne C_j$ if $i\ne j$. 
\end{definition}

Pseudo-holomorphic subvarieties are closely related to, but  clearly different from pseudo-holomorphic maps.
They are the real analogues of one dimensional subvarieties in algebraic geometry. 
When $J$ is understood, we
will simply call a  $J-$holomorphic subvariety a subvariety. 
An irreducible  subvariety  is said to be {\it smooth} if it has no singular points. 
A subvariety $\Theta=\{(C_i, m_i)\}$ is said to be 
connected if $\cup C_i$ is connected. 

Taubes provides a  systematic analysis of pseudo-holomorphic subvarieties in \cite{T1}.  
The knowledge of the structure of reducible $J-$holomorphic subvarieties is very  important, in both the integrable case and the tamed case.  Among others, two aspects are especially significant for applications. Firstly,  under natural conditions, we need to know that the irreducible components are not too complicated. This point is used for example in the argument in \cite{GH} on the structure of rational curves. Secondly,  we need to know the moduli space of the reducible subvarieties is not too large to ensure the existence of irreducible subvarieties. This is used in \cite{LZ12} for the study of Donaldons's tamed-to-compatible question and almost K\"ahler Nakai-Moishezon criterion. These aspects are the main focus of this paper.

Suppose  $C$ is an irreducible subvariety.
Then it is the image of a $J-$holomorphic map
$\phi:\Sigma\to  M$ from a complex connected curve $\Sigma$, where $\phi$
is an embedding off a finite set. $\Sigma$ is called the model curve and $\phi$ is called the tautological map. 
The map $\phi$ is uniquely determined up to automorphisms of $\Sigma$. 
This understood, the associated homology class $e_C$ is defined to be the push forward of the fundamental class of $\Sigma$ via $\phi$. 
And for a subvariety $\Theta$, the associated class $e_{\Theta}$ is defined to be $\sum m_ie_{C_i}$.

A special feature in dimension 4 is that,  by the adjunction formula,   the genus of a smooth subvariety $C$ is given  by $g_J(e_C)$ defined as follows. 
Given a class $e$ in $H_2(M;\mathbb Z)$,
introduce  the $J-$genus of $e$, 
\begin{equation} \label{J-genus}
\begin{array}{lll}
 g_J(e)&=&\frac{1}{2}(e\cdot e+K_J\cdot e)+1,
 \end{array}
 \end{equation}
  where $K_J$ is the canonical class of $J$.

Moreover, when $C$ is irreducible,  $g_J(e_C)$ is non-negative.  In fact, if $\Sigma$
is the model curve of $C$, by the adjunction inequality in \cite{McD},  
\begin{equation}\label{adj inequality}
g_J(e_C)\geq g(\Sigma),
\end{equation}
with equality if and only if $C$ is smooth.

We investigate, under what conditions on the class $e$, 
$g_J(e)$ still bounds the total genus of 
any connected, reducible subvariety in $e$. 

\begin{definition}
The total genus $t(\Theta)$ of $\Theta$ is defined to be $\sum_i g_J(e_{C_i})$.
\end{definition}

\begin{question}
Suppose $e$ is a class with $g_J(e)\geq 0$ and $\Theta=\{(C_i,m_i)\}$ is a connected subvariety in the class $e$.
Find general conditions such that  
\begin{equation}\label{bound} g_J(e)\ge t(\Theta). 
 \end{equation}
 \end{question}

The study of Donaldson's ``tamed to compatible" question and almost K\"ahler Nakai-Moishezon duality by Taubes' subvariety-current-form strategy \cite{T1, LZ12} led us to this problem
in the case $g_J(e)=0$ and $J$ is tamed.
This  problem is very subtle when there are irreducible components 
with negative self-intersection and high multiplicity;  incorrect assertions are 
easily made from geometric intuition (see e.g. Example \ref{-2K}).

In this paper, we settle it for $J-$nef classes. A class $e$ is said to be {\it $J-$nef} if it pairs  non-negatively with any $J-$holomorphic subvariety. Whenever there is a $J-$holomorphic subvariety representative in a $J-$nef class, we have $e\cdot e\ge 0$.

\begin{theorem}\label{genus bound}
Suppose   $e$ is a $J-$nef class with $g_J(e)\geq 0$. Then  \eqref{bound} holds for any connected subvariety in the class $e$.  
\end{theorem}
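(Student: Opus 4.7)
The plan is to reduce the claim to a combinatorial inequality on the weighted dual graph of $\Theta$ and then exploit $J$-nefness, positivity of intersections, and connectedness. First I would expand $\delta := g_J(e) - t(\Theta)$ by applying the bilinear identity $g_J(a+b) = g_J(a) + g_J(b) + a\cdot b - 1$ inductively to $e = \sum_i m_i e_{C_i}$. After collecting terms this gives
\begin{equation*}
\delta = \tfrac{1}{2}\sum_i m_i(m_i-1)\, e_{C_i}^2 + \sum_{i<j} m_i m_j\, e_{C_i}\cdot e_{C_j} + \sum_i (m_i-1)\bigl(g_J(e_{C_i})-1\bigr) - (n-1),
\end{equation*}
where $n$ is the number of irreducible components of $\Theta$.

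Next, setting $\alpha_i := e\cdot e_{C_i}$ and using $m_i e_{C_i}^2 = \alpha_i - \sum_{j\neq i} m_j\, e_{C_i}\cdot e_{C_j}$, I would rewrite the first two terms as $\tfrac{1}{2}\sum_i (m_i-1)\alpha_i + \sum_{i<j}\tfrac{m_i+m_j}{2}\, e_{C_i}\cdot e_{C_j}$. By $J$-nefness each $\alpha_i\geq 0$, so the first sum is non-negative. Positivity of intersections between distinct irreducible $J$-holomorphic subvarieties (so each nonzero $e_{C_i}\cdot e_{C_j}$ is a positive integer), combined with connectedness of $\cup_i C_i$, gives $\sum_{i<j} e_{C_i}\cdot e_{C_j}\geq n-1$ via a spanning-tree argument; since $\tfrac{m_i+m_j}{2}\geq 1$, the second sum is at least $n-1$ and cancels the constant $-(n-1)$ from the first step. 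Hence $\delta\geq \sum_i (m_i-1)(g_J(e_{C_i})-1)$.

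The residual lower bound is non-negative unless some $C_i$ is a rational component ($g_J(e_{C_i})=0$) with $m_i\geq 2$. This is where the hypothesis $g_J(e)\geq 0$ must enter, and where the combinatorial-move machinery advertised in the abstract is presumably indispensable. I would set up an induction on a complexity invariant of the weighted graph, for instance the sum $\sum_{i\in S}(m_i-1)$ taken over $S=\{i:g_J(e_{C_i})=0,\ m_i\geq 2\}$, using moves such as decreasing a multiplicity by one, absorbing an exceptional rational component into an adjacent one, or blowing down a $(-1)$-sphere. For each move I would verify that the residual class remains $J$-nef with non-negative $J$-genus, that connectedness is preserved, and that the bound on $\delta$ is transferred back to $\Theta$.

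The main obstacle will be the case of a rational component $C_i$ with $m_i\geq 2$ and $\alpha_i\in\{0,1\}$ joined to the rest of $\Theta$ only through other rational components: the bounds from the second step are then tight in every direction, and the missing positivity must be squeezed out of the global constraint $g_J(e)\geq 0$ together with the tree structure of the rational-component subgraph. Identifying a move that simultaneously preserves $J$-nefness, preserves connectedness, and closes the induction in this configuration is the delicate point.
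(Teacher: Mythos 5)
Your opening computation is correct and clean: the identity
\begin{equation*}
g_J(e)-t(\Theta)=\tfrac{1}{2}\sum_i (m_i-1)\,e\cdot e_{C_i}+\sum_{i<j}\tfrac{m_i+m_j}{2}\,e_{C_i}\cdot e_{C_j}+\sum_i (m_i-1)\bigl(g_J(e_{C_i})-1\bigr)-(n-1)
\end{equation*}
checks out, and with $J$-nefness, positivity of intersections and a spanning-tree count it yields $g_J(e)-t(\Theta)\ge \sum_i (m_i-1)(g_J(e_{C_i})-1)$. This recovers, in one stroke, the two easy cases the paper treats separately (Lemma \ref{simple} and the relevant part of Lemma \ref{1vertex}), and it even gives the theorem whenever every component of multiplicity $\ge 2$ has $g_J\ge 1$. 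But the theorem's actual content lies precisely in the case you defer: components with $g_J(e_{C_i})=0$ and $m_i\ge 2$, above all negative self-intersection spheres of high multiplicity. Your proposal stops at announcing an induction on $\sum_{i\in S}(m_i-1)$ with unspecified moves and explicitly flags the tight configurations as ``the delicate point''; that delicate point is the whole proof. Moreover, some of the moves you float would not work as stated: decreasing a multiplicity by one changes the total class $e$, so both the nefness pairings $e\cdot e_{C_i}\ge 0$ and the target quantity $g_J(e)$ change, and there is no evident way to transfer the bound back; and a naive blow-down of a $-1$ component need not preserve the curve-configuration structure unless the precise numerical hypotheses of the paper's moves (i)--(iii) are met (equality $\sum n_iD_i\cdot E=n_0$, neighbours of square $\le -2$, etc.).

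A second, conceptual misdirection: you expect the hypothesis $g_J(e)\ge 0$ to supply the missing positivity. It cannot, because it carries no information here --- the conclusion $g_J(e)\ge t(\Theta)\ge 0$ forces it, and indeed the paper's Proposition \ref{emb-comp-graph} proves the bound for any connected nef configuration with no such hypothesis. The positivity you are missing is instead produced structurally: the paper first rearranges the weighted graph by class-preserving curve expansion and combination moves (Lemma \ref{combination}), each of which keeps the configuration connected and nef and does not decrease $\sum_i g_J(e_i)$ (Lemmas \ref{simple'} and \ref{adj=-2}); after the rearrangement every remaining $-1$ vertex satisfies a \emph{strict} inequality $\sum n_iD_i\cdot E>n_0$ (because move (iii) has been exhausted), and it is exactly this strictness, combined with nefness at the $\le -2$ vertices, that closes the estimate in Lemma \ref{multi 1}. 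Without an analogue of that rearrangement-plus-strictness mechanism, your induction has no source of the extra $+1$ per problematic component, so as it stands the proposal has a genuine gap at the heart of the theorem.
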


To prove Theorem \ref{genus bound}, we treat subvarieties  as weighted graphs, and use  {\it curve expansion} and {\it curve combination}  to rearrange 
 the multiply covered part. In fact, these techniques are also effective analyzing  when the stronger bound 
 \begin{equation}\label{strong bound} g_J(e)\ge \sum_i m_ig_J(e_{C_i})
 \end{equation}
 holds.

Notice that when $g_J(e)=0$, 
we actually have equality in Theorem \ref{genus bound}.  This is because  $g_J(e_{C_i})\ge 0$ for all $i$ since each $C_i$ is  irreducible.  In turn this implies that $g_J(e_{C_i})=0$.
Moreover, we have the following more precise result.
\begin{theorem} \label{emb-comp}  Suppose  $e$ is a  $J-$nef class with $g_J(e)=0$. 
Let   $\Theta$ be a $J-$holomorphic subvariety in the class $e$. 
\begin{itemize}
\item 
If $\Theta$ is connected, then   each  irreducible component  of $\Theta$  is a smooth rational curve, and $\Theta$ is a tree configuration.
\item
If $J$ is tamed, then $\Theta$ is connected. 
\end{itemize}
\end{theorem}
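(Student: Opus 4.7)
The plan is to derive the first bullet from Theorem \ref{genus bound} together with the adjunction inequality, and then to exploit the tameness of $J$ via a positivity argument to rule out disconnected representatives.

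For the smoothness-and-rationality assertion of the first bullet, apply Theorem \ref{genus bound} to the connected subvariety $\Theta$: since $e$ is $J$-nef with $g_J(e) = 0$, one gets
\begin{equation*}
0 \;=\; g_J(e) \;\geq\; t(\Theta) \;=\; \sum_i g_J(e_{C_i}),
\end{equation*}
while the adjunction inequality of \cite{McD} recalled in the introduction forces every $g_J(e_{C_i})\geq 0$, with equality only when the model curve is rational and $C_i$ is smooth. Hence every $g_J(e_{C_i}) = 0$, and every irreducible component of $\Theta$ is a smooth rational curve. For the tree assertion, write $a_{ij} := e_{C_i}\cdot e_{C_j}$ and expand $g_J(e) = 0$ using $e = \sum_i m_i e_{C_i}$ and $g_J(e_{C_i}) = 0$; this produces the identity
\begin{equation*}
\tfrac12 \sum_i m_i(m_i-1)\,a_{ii} \;+\; \sum_{i<j} m_i m_j\, a_{ij} \;=\; \sum_i m_i - 1.
\end{equation*}
The $J$-nef hypothesis supplies $e\cdot e_{C_i} = m_i a_{ii} + \sum_{j\neq i} m_j a_{ij}\geq 0$ for each $i$, while positivity of intersections gives $a_{ij}\geq 0$ for $i\neq j$. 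Combining the component-wise nef inequalities with the above identity (via the curve expansion and curve combination moves introduced for Theorem \ref{genus bound}) and with connectedness of the dual graph on the $n$ vertices $\{C_i\}$, one forces $a_{ij}\in\{0,1\}$ and $\sum_{i<j}a_{ij} = n-1$; the dual graph is thus a tree and all intersections are transverse single points.

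For the second bullet, suppose for contradiction that $\Theta = \Theta^1\sqcup\cdots\sqcup\Theta^\ell$ decomposes into $\ell\geq 2$ disjoint connected components with classes $e^k$. Disjointness yields $e^i\cdot e^j = 0$ for $i\neq j$, and combining with the $J$-nef hypothesis on $e$ and positivity of intersections shows that each class $e^k$ is itself $J$-nef: any irreducible test curve $D$ is either a component of some $\Theta^k$ (so $e^k\cdot D = e\cdot D \geq 0$), a component of $\Theta^j$ for $j\neq k$ (so $e^k\cdot D = 0$), or distinct from every component of $\Theta$ (so $e^k\cdot D\geq 0$ by positivity). Expanding $g_J(e) = 0$ via $e = \sum_k e^k$ with the vanishing cross terms yields
\begin{equation*}
\sum_k g_J(e^k) \;=\; \ell - 1,
\end{equation*}
while Theorem \ref{genus bound} applied to each $\Theta^k$ forces $g_J(e^k)\geq 0$ and $e\cdot e^k = (e^k)^2 \geq 0$. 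Since $J$ is tamed, $\omega\cdot e^k > 0$ for every $k$. A light-cone or Hodge-index type argument in the tamed setting, combined with the first bullet applied to those pieces with $g_J(e^k) = 0$ to extract concrete tree-of-rationals representatives, then contradicts the existence of $\ell\geq 2$ pairwise orthogonal, $\omega$-positive classes satisfying $\sum_k g_J(e^k) = \ell - 1$.

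The main obstacle is the second bullet: while the first reduces to an arithmetic unpacking of Theorem \ref{genus bound} in conjunction with the combinatorial framework of the paper, ruling out disconnected representatives genuinely needs the tameness hypothesis. The delicate point is converting the orthogonality $e^i\cdot e^j = 0$, the $\omega$-positivity, and the constraint $\sum_k g_J(e^k) = \ell - 1$ into a definitive contradiction — this is straightforward when $b^+(M) = 1$ via the usual light cone, but requires a more careful index-theoretic argument for higher $b^+$, and is where the real work lies.
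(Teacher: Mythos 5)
Your first bullet is half right: the smoothness/rationality claim is exactly the paper's route (Theorem \ref{genus bound} forces $g_J(e_{C_i})=0$, then the adjunction inequality), and your identity $\tfrac12\sum_i m_i(m_i-1)a_{ii}+\sum_{i<j}m_im_ja_{ij}=\sum_i m_i-1$ is correct. But the deduction of the tree property from it is a genuine gap when some $m_i>1$: for components with $a_{ii}<0$ the first term is negative, so the identity alone does not force $\sum_{i<j}a_{ij}=n-1$, and ``combining with the curve expansion and combination moves'' does not finish the job by itself, because the moves run in the wrong direction. The moves turn the given configuration $G$ into a simpler one $G'$ whose vertices all have multiplicity one (this uses Lemma \ref{multi 1}, since $g_J(e)=0$), and $G'$ is then a tree by the multiplicity-one identity; but you still must transport tree-ness \emph{backwards} from $G'$ to $G$. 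That reverse step is where the paper works: one first shows every vertex appearing at every stage has $adj=-2$ (Lemma \ref{tree2}) and then checks move by move that the edge-label sum changes by at least the change in vertex count (Lemma \ref{tree3}), e.g.\ the estimate \eqref{iii12} for move (iii). None of this is in your sketch.

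The more serious gap is the second bullet. Your skeleton (disjoint pieces give orthogonal classes, each piece is $J$-nef with $(e^k)^2\ge 0$, tameness gives $\omega\cdot e^k>0$, then a light-cone argument) is the paper's argument in Proposition \ref{connected} --- but only when $b^+(M)=1$, and you explicitly leave the case $b^+>1$ open, calling it ``where the real work lies.'' The missing idea is that no separate index-theoretic argument is needed: by Proposition \ref{existence} (Taubes's symplectic Seiberg--Witten theory plus McDuff's classification), a tamed $J$ together with such a spherical class of non-negative square forces $M$ to be rational or ruled, hence $b^+(M)=1$; this is precisely how the paper reduces to the light-cone case. Two further steps of yours are also shaky: (i) the claim that Theorem \ref{genus bound} ``forces $g_J(e^k)\ge 0$'' is backwards, since $g_J\ge 0$ is a \emph{hypothesis} of that theorem, and indeed $g_J$ of a nef effective class can be negative (a single component $(C,m)$ with $C$ a smooth rational curve, $C\cdot C=0$, $m\ge 2$ is connected and nef with $g_J(mC)=1-m<0$), so your bookkeeping $\sum_k g_J(e^k)=\ell-1$ with all terms non-negative is unjustified; (ii) even granting $b^+=1$, the light cone only yields that all pieces are proportional classes of square zero, and the contradiction still requires the parity argument with $K_J$ characteristic (the paper's $p=2$, $K_J\cdot e'=-1$, $e'\cdot e'=0$ step), which you do not supply. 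The paper's Proposition \ref{connected} gets the contradiction directly from the decomposition, with no genus bookkeeping at all.
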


Here, for a tree configuration, we refer to Definition \ref{deftree}. In particular, distinct components in a tree configuration intersect at most once.

Recall that $J$ is said to be tamed if there is
a symplectic form $\omega$ such that the bilinear form $\omega(\cdot, J(\cdot))$ is positive definite. 
The tameness is  necessary  for the second bullet since otherwise there could be a null homologous $J-$holomorphic torus in $\Theta$.

Thus, connected configurations in  a $J-$nef spherical class match our geometric intuition: each component is a smooth rational curve. 
A particularly nice consequence is 

\begin{cor} \label{embsphere} Suppose $J$ is a tamed almost complex structure  
and   $e$ is a class   represented by a smooth $J$-holomorphic rational curve. 
Then for  any $J-$holomorphic subvariety $\Theta$ in the class $e$,
each irreducible component of $\Theta$ is a smooth  rational curve. 
\end{cor}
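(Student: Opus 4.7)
The plan is to reduce to Theorem \ref{emb-comp}. Let $C$ be the smooth rational $J$-holomorphic curve representing $e$, so $e_C = e$. Since $C$ is smooth, the adjunction equality for irreducible subvarieties stated in the introduction gives $g_J(e) = g(C) = 0$, so the genus hypothesis of Theorem \ref{emb-comp} is automatic. The $J$-nefness hypothesis is not automatic and must be examined: testing $e \cdot e_D$ against an irreducible $J$-holomorphic subvariety $D$, the case $D \neq C$ is controlled by positivity of intersection for tamed $J$, giving $e_C \cdot e_D \geq 0$, while $D = C$ contributes exactly $e \cdot e$. Hence $e$ is $J$-nef if and only if $e \cdot e \geq 0$, and the argument splits naturally into two cases.

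In the case $e \cdot e \geq 0$, the class $e$ is $J$-nef with $g_J(e) = 0$, so Theorem \ref{emb-comp} applies directly: the second bullet forces any $J$-holomorphic subvariety $\Theta$ in class $e$ to be connected, after which the first bullet yields that each irreducible component of $\Theta$ is a smooth rational curve.

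The case $e \cdot e < 0$ (modeled on exceptional-sphere classes) is not covered by Theorem \ref{emb-comp} and needs a separate argument showing that $C$, with multiplicity one, is the unique $J$-holomorphic subvariety in class $e$. Given any $\Theta = \{(C_i, m_i)\}$ in class $e$, I would first rule out the possibility that no $C_i$ equals $C$: then positivity of intersection gives $e \cdot e = \sum m_i \, e_C \cdot e_{C_i} \geq 0$, contradicting $e \cdot e < 0$. If instead some $C_1 = C$ appears with multiplicity $m_1 \geq 1$, I would rewrite the class equation as $(1 - m_1)\, e_C = \sum_{i \geq 2} m_i \, e_{C_i}$ and pair with the taming form $\omega$; since $\omega \cdot e_{C_i} > 0$ for every $J$-holomorphic subvariety, the case $m_1 \geq 2$ yields a sign contradiction, while $m_1 = 1$ forces $\sum_{i \geq 2} m_i \, \omega \cdot e_{C_i} = 0$ and hence $\Theta = \{(C, 1)\}$.

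The main (and essentially only) obstacle is remembering to peel off this negative self-intersection case by hand, since Theorem \ref{emb-comp} does not apply there; the work required is a short sign analysis combining positivity of intersection with $\omega$-positivity of $J$-holomorphic classes. All the substantive content is packaged inside Theorem \ref{emb-comp}.
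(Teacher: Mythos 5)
Your proposal is correct, and in the main case it runs on the same engine as the paper: for $e\cdot e\ge 0$ you quote Theorem \ref{emb-comp}, whose proof is precisely the paper's proof of the corollary (Proposition \ref{connected} for connectedness, then Corollaries \ref{smooth component} and \ref{tree}). Where you genuinely diverge is the case $e\cdot e<0$, which is indeed outside the $J$-nef hypothesis. The paper folds that case into Proposition \ref{connected}: if $\Theta$ were disconnected there would be a connected component $D$ with $e\cdot e_D=e_D\cdot e_D<0$ while every irreducible component of $D$ (necessarily distinct from $C$) pairs non-negatively with $e$, and smoothness of the components then comes from the nef-configuration genus bound, Proposition \ref{emb-comp-graph}. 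You instead prove the stronger statement that $\{(C,1)\}$ is the only subvariety in the class, combining positivity of intersections (to rule out configurations avoiding $C$) with $\omega$-positivity of $J$-holomorphic classes (to rule out $m_1\ge 2$ and extra components once $C$ appears). This is more elementary and also explicitly covers the one configuration the literal hypothesis of Proposition \ref{connected} (that $e\cdot e_{C_i}\ge 0$ for every component) cannot accommodate, namely a component equal to $C$ itself when $e\cdot e<0$; the paper's one-line check of that hypothesis passes over this point silently. In short: same machinery where the class is nef, and a shorter, self-contained uniqueness argument where it is not, at the small cost of not reusing Proposition \ref{connected} verbatim.
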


We will comment on various versions of this result  in the literature (\cite{GH}, \cite{McD}, \cite{McD1}) in \ref{rk-smooth}. 

For a $J-$nef spherical class,   the irreducible part of the moduli space, when non-empty,  is a smooth manifold of expected dimension. This is   due to the  ``automatic regularity" of any smooth rational curve  with non-negative self intersection.
In Corollary 4.10 we further show that the reducible part always has smaller dimension.  And if we assume that $J$ is tame,   by Proposition 4.5 in \cite{LZ12}, this assumption would actually guarantee the irreducible part of the moduli space to  be non-empty and hence the existence of a smooth rational curve in the given class. This is used as a crucial step in our study of Donaldson's tamed-to-compatible question and Nakai-Moishezon duality between the almost K\"ahler cone and the curve cone in \cite{LZ12}. Moreover, along with the techniques of \cite{LZ12}, the second author is able to apply our main results to study when a symplectic surface is symplectically isotopic to an algebraic curve in a general ambient $4-$manifold.

We also investigate which   stratum of the reducible part has codimension one. It is  interesting that, in this case, the curve combination moves we applied to prove Theorem \ref{genus bound} have a nice interpretation as combinatorial blow-downs. This viewpoint makes it possible 
 to classify the corresponding connected configurations in Theorem \ref{cd1conf} when $b^+=1$. Precisely, these configurations are shown to be either successive blow-ups of  a single smooth curve,
  or successive blow-ups of a comb configuration along the spike curve.
  
Finally, we would like to remark that, as pointed out by Gompf, the same arguments apply to closed holomorphic curves in a Stein manifold and all the results in this paper hold true as well. 

We appreciate useful discussions with D. McDuff,   W. Wu, and we  thank R. Gompf, V. Tosatti and M. Usher for their interest.  We are deeply grateful to the referee for careful reading and extremely helpful suggestions. 
The authors benefited from NSF grant 1065927 (of the first author). The second author is partially supported by AMS-Simons travel grant.


\section{Pseudo-holomorphic subvarieties}

We always assume $M$ is a $4-$dimensional manifold with a fixed almost complex structure $J$.

\subsection{Properties of irreducible subvarieties}

\subsubsection{Genus and adjunction number}
Let $C$ be an irreducible subvariety. 
The geometric genus of  $C$ is defined to be the genus of its model curve $C_0$, and 
the arithmetic genus of $C$ is $g_J(e_C)$. The adjunction inequality 
\eqref{adj inequality} says that the arithmetic genus is no less than the geometric genus.

The next  result follows directly from the adjunction inequality. 
\begin{lemma}\label{0}
 If $g_J(e_C)=0$, then $C$ is a smooth rational curve. 
\end{lemma}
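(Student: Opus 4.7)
The plan is to apply the adjunction inequality recalled just before the lemma statement. Recall that for an irreducible subvariety $C$ with model curve $\Sigma$, McDuff's adjunction inequality asserts
\[
g_J(e_C) \geq g(\Sigma),
\]
with equality if and only if $C$ is smooth. Thus the entire content of the lemma is already packaged in this inequality plus the observation that geometric genus is nonnegative.

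First I would unpack the hypothesis $g_J(e_C) = 0$ and combine it with the adjunction inequality to obtain $g(\Sigma) \leq 0$. Since $\Sigma$ is a complex curve its genus is a nonnegative integer, so we conclude $g(\Sigma) = 0$, i.e.\ $\Sigma \cong \mathbb{P}^1$. In particular $C$ is rational, being the image of $\mathbb{P}^1$ under the tautological map $\phi$.

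Second, I would observe that we have in fact achieved equality $g_J(e_C) = g(\Sigma) = 0$ in the adjunction inequality, so the equality clause applies and forces $C$ to have no singular points; that is, $C$ is smooth. Combining the two conclusions, $C$ is a smooth rational curve, which is what the lemma claims.

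The proof is essentially one line once the adjunction inequality is invoked, so there is no real obstacle; the only subtlety worth making explicit is that we are using both the inequality and its equality clause, which is why the bound $g_J(e_C)=0$ yields both the topological type (rationality) and the smoothness simultaneously.
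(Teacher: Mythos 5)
Your proof is correct and follows exactly the same route as the paper, which simply notes the lemma ``follows directly from the adjunction inequality'' in the form $g_J(e_C)\geq g(\Sigma)$ with equality if and only if $C$ is smooth. You have merely spelled out the two-line deduction (nonnegativity of $g(\Sigma)$ forces $g(\Sigma)=0$ and equality, hence rationality and smoothness), which is precisely the intended argument.
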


It is  convenient to introduce  the adjunction number.

\begin{definition}
The adjunction number of $e$ is given by 
$$adj(e)=e\cdot e+K_J\cdot e.$$ 
\end{definition}

Notice that $$2g_J(e)=adj(e)+2.$$
By the adjunction inequality \eqref{adj inequality}, $adj(e_C)\geq -2$.

\subsection{The moduli space}
In this subsection we fix a class $e$. 

As in \cite{T1}, we define the moduli space $\mathcal M_e$ of subvarieties in the class $e$: Any element $\Theta$ in  $\mathcal M_e$  is a $J-$holomorphic 
subvariety with  $ e_{\Theta}=e$.

\begin{definition}\label{eff}
A  homology class $e\in H_2(M; \mathbb Z)$ is said to be  
$J-$effective if $\mathcal M_e$ is nonempty. 
\end{definition}

We use $\mathcal M_{irr, e}$  to denote the moduli space of irreducible subvarieties in class $e$. Let $\mathcal M_{red,e}$ denote $\mathcal M_e \setminus \mathcal M_{irr,e}$.

\subsubsection{Topology}
$\mathcal M_e$ has a natural topology. Let $|\Theta|=\cup_{(C, m)\in \Theta}C$ denote the support of $\Theta$. Consider  the symmetric, non-negative function, $\varrho$, on $\mathcal M_e\times \mathcal M_e$ that is defined by the following rule:
\begin{equation} \varrho(\Theta, \Theta')=\sup _{z\in |\Theta|} \hbox{dist}(z, |\Theta'|)
+\sup _{z'\in |\Theta'|} \hbox{dist}(z', |\Theta|).
\end{equation}
The function $\varrho$ is used to measure distances on $\mathcal M_e$, where the distance function dist is defined by an almost Hermitian metric on $(M, J)$.

Given a smooth $2$-form $\nu$ we introduce the pairing
$$(\nu, {\Theta} )=\sum_{(C, m)\in \Theta} m\int_{C}\nu.$$

The topology on $\mathcal M_e$ is defined in terms of convergent sequences:

A sequence $\{\Theta_k\}$ in $\mathcal M_e$ converges to a given element $\Theta$ if the following two conditions are met:

\begin{itemize}
\item  $\lim_{k\to \infty} \varrho (\Theta, \Theta_k)=0$.

\item  $\lim_{k\to \infty} (\nu, \Theta_k)=(\nu, \Theta)$ for any given smooth 2-form $\nu$.
\end{itemize}

\begin{definition} Given a class $e$, introduce  its $J-$dimension, 
\begin{equation} \label{l}  \iota_e=\frac{1}{2}(e\cdot e-K_{J}\cdot e). 
\end{equation}
\end{definition}

$\iota_e$ is the expected dimension of the moduli space $\mathcal M_e$.

\subsubsection{Smooth rational curves}

When $e$ is a class  represented by a smooth  rational curve,
we introduce 
$$l_e=\max\{\iota_e, 0\}.$$

The following is an immediate consequence of the adjunction formula and the adjunction inequality \eqref{adj inequality}. 

\begin{lemma} \label {irr=smooth} If $g_J(e)=0$, then  

$\bullet$ $\iota_e=e\cdot e+1$, where $\iota_e$ is defined in  \eqref{l};

$\bullet$ every element in  $\mathcal M_{irr,e}$  is a smooth rational curve. 
\end{lemma}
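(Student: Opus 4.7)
The plan is to derive both bullets directly from the definitions together with the previously established Lemma \ref{0}, so this should be a short verification rather than a substantive argument.

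For the first bullet, I would start from the hypothesis $g_J(e)=0$, which by \eqref{J-genus} means $e\cdot e + K_J\cdot e = -2$, equivalently $K_J\cdot e = -2 - e\cdot e$. Substituting into the defining formula \eqref{l} for $\iota_e$ gives
\[
\iota_e = \tfrac{1}{2}\bigl(e\cdot e - K_J\cdot e\bigr) = \tfrac{1}{2}\bigl(e\cdot e - (-2 - e\cdot e)\bigr) = e\cdot e + 1,
\]
which is the claimed identity.

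For the second bullet, I would let $C$ be any irreducible subvariety in $\mathcal{M}_{irr,e}$, so $e_C = e$ and hence $g_J(e_C) = g_J(e) = 0$. By Lemma \ref{0} (which was the immediate application of the adjunction inequality from \cite{McD} together with the equality case characterizing smoothness), $C$ is a smooth rational curve.

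There is no real obstacle here: the first statement is a one-line algebraic manipulation of the two quadratic expressions $g_J(e)$ and $\iota_e$, and the second is a direct invocation of Lemma \ref{0}. The only thing to check carefully is that the convention for $K_J$ and the sign in \eqref{l} are consistent with \eqref{J-genus}, which they are as written.
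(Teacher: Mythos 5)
Your proof is correct and follows the same route the paper intends: the paper states the lemma as an immediate consequence of the adjunction formula and adjunction inequality, which is exactly your substitution computation for the first bullet and the appeal to Lemma \ref{0} (itself the adjunction inequality applied to $g_J(e_C)=0$) for the second.
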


One special feature of the moduli space of smooth rational curves is the following automatic transversality 
(\cite{HLS97}), 
which is valid for an arbitrary almost complex structure. 

\begin{lemma} \label{pdmanifold} Let $e$ be a class  represented by a smooth  rational curve with $e\cdot e\geq -1$.
Then $\mathcal M_{irr,e}$ is a smooth manifold of dimension $2l_e$. 

\end{lemma}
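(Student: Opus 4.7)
The plan is to reduce the statement to the automatic transversality theorem \cite{HLS97} cited in the hypothesis, which is tailor-made for this setting. The argument splits naturally into three steps: identify the geometric objects, apply automatic transversality to the normal operator, and take the reparametrization quotient.

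First, I would use Lemma \ref{irr=smooth}. Since $g_J(e)=0$, every $C\in\mathcal M_{irr,e}$ is a smooth rational curve, and its tautological map $\phi\colon S^2\to M$ is an embedding. The same lemma also gives $\iota_e=e\cdot e+1$, so the hypothesis $e\cdot e\ge -1$ is equivalent to $\iota_e\ge 0$; in particular $l_e=\iota_e=e\cdot e+1$.

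The heart of the argument is the following. The normal bundle $N_C$ of the embedded sphere $C$ is a complex line bundle over $S^2$ of degree $c_1(N_C)=e\cdot e\ge -1$. The Hofer--Lizan--Sikorav theorem asserts that for a line bundle over $S^2$ of degree at least $-1$, the linearized normal Cauchy--Riemann operator on sections of $N_C$ is automatically surjective, with no tameness or genericity hypothesis on $J$. This is a genuinely $4$-dimensional phenomenon and is the essential input. Combined with the standard splitting of the full linearized operator along $TC\oplus N_C$, the surjectivity implies that near $\phi$ the space of $J$-holomorphic embeddings forms a smooth Banach manifold of real dimension equal to the Cauchy--Riemann index, $4-2K_J\cdot e = 2e\cdot e + 8$.

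Finally, I would pass to the free quotient by the $6$-dimensional reparametrization group $\mathrm{PSL}(2,\mathbb{C})$, which lowers the dimension to $2e\cdot e+2=2l_e$. The one point I would verify carefully---but which I do not view as a serious obstacle---is that this quotient, as a topological space, agrees with $\mathcal M_{irr,e}$ in its $\varrho$-topology from Section 2.2.1; for embedded rational curves with unique tautological parametrization up to $\mathrm{Aut}(S^2)$ this is routine. Thus the whole statement reduces, modulo bookkeeping, to invoking \cite{HLS97} with the correct normal Chern number.
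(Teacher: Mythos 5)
Your proposal is correct and follows essentially the same route as the paper, which proves this lemma simply by invoking the Hofer--Lizan--Sikorav automatic transversality result \cite{HLS97}: your reconstruction (normal bundle of degree $e\cdot e\ge -1$ over $S^2$, surjectivity of the normal Cauchy--Riemann operator, index $4-2K_J\cdot e=2e\cdot e+8$, free quotient by $\mathrm{PSL}(2,\mathbb C)$ giving $2l_e$) is exactly the standard argument behind that citation. The only bookkeeping worth making explicit is that $g_J(e)=0$ follows from the hypothesis via the adjunction formula for the smooth rational representative, so that Lemma \ref{irr=smooth} indeed applies to every element of $\mathcal M_{irr,e}$.
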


\subsection{$J-$nef class}

\begin{definition} \label{nef}
A  homology class $e\in H_2(M; \mathbb Z)$ is said to be $J-$nef if it pairs non-negatively with   any $J-$effective class.

\end{definition}

The following lemma immediately follows from the positivity of intersections of distinct irreducible subvarieties.

\begin{lemma}\label{irr=nef}
If $e$ is   represented by an irreducible $J-$holomorphic subvariety and $e\cdot e\ge 0$, then $e$ is  a $J-$nef class.

On the other hand, if $e$ is $J-$nef and $J-$effective, $e\cdot e\ge 0$.
\end{lemma}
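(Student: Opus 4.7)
The plan is to deduce both statements from positivity of intersections for distinct irreducible $J$-holomorphic subvarieties in dimension four (McDuff), together with the definitions of $J$-nef and $J$-effective already stated.

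For the first statement, let $C$ be the given irreducible $J$-holomorphic representative of $e$, and let $e'$ be an arbitrary $J$-effective class, represented by a $J$-holomorphic subvariety $\Theta' = \{(C_i', m_i')\}$. Expanding,
$$e\cdot e' \;=\; \sum_i m_i'\,(e_C\cdot e_{C_i'}).$$
I would split each term into two cases. If $C_i' \neq C$, positivity of intersections applied to the distinct irreducible $J$-holomorphic subvarieties $C$ and $C_i'$ yields $e_C\cdot e_{C_i'} \geq 0$. If $C_j' = C$ for some index $j$, then $e_C\cdot e_{C_j'} = e\cdot e \geq 0$ by hypothesis. Since the multiplicities $m_i'$ are non-negative, the sum is non-negative, which is exactly the nefness condition.

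For the second statement, the argument is essentially tautological: $e$ is $J$-effective by assumption, so the defining property of $J$-nefness applied to the pairing of $e$ against itself immediately yields $e\cdot e \geq 0$.

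The entire argument is bookkeeping from the definitions, with the single non-trivial ingredient being the positivity-of-intersections theorem, which is invoked throughout the paper and is standard. The only point requiring a moment of attention is organizing the case split in the first statement to handle the possibility that $C$ itself appears as a component of $\Theta'$; once that is separated from the case of distinct components (where positivity of intersections applies), there is no real obstacle.
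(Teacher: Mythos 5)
Your proposal is correct and follows essentially the same route as the paper, which simply observes that the lemma is an immediate consequence of positivity of intersections of distinct irreducible subvarieties (your case split separating components equal to $C$ from distinct ones, plus applying nefness to the $J$-effective class $e$ itself for the second statement, is exactly the intended bookkeeping).
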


For a tamed almost complex structure $J$, the notion $J-$nef is a 
natural condition which guarantees the good behavior of the $J-$holomorphic subvarieties, as can be seen  in many results in this paper and  also Examples \ref{-2K} and \ref{not connected}. Moreover, it is  an important condition for constructing non-negative currents in \cite{LZ12} as we briefly explain below.

 It is known that,
when $J$ is K\"ahler, in any big and nef cohomology class ({\it i.e.} a nef cohomology class with positive top power), there is a K\"ahler current. 
Here a current is a differential form with distribution coefficients. Hence it represents a real cohomology class when pairing with smooth closed forms in the weak sense.
The K\"ahler currents play an intermediate role in \cite{DP} to construct K\"ahler forms. For the subvariety-current-form strategy, Taubes current is such an intermediate object, which is usually constructed through integrations over certain moduli space of subvarieties. Hence, our definition of $J-$nef mimics the original algebraic one, instead of the K\"ahler notion. Notice our definition does not require the existence of an almost K\"ahler form.

\subsection{When $J$ is tamed}

Here is a well known fact that we will need in Section 4. 

\begin{lemma}\label{non-trivial}
If  $J$ is tamed then  the  homology class $e_C$ of any subvariety $C$ is nontrivial.
\end{lemma}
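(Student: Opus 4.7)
The plan is to exploit the taming symplectic form $\omega$ as a calibration-type object that is strictly positive on every $J$-invariant tangent 2-plane, and then integrate it against $C$ to produce a positive pairing with $e_C$.

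First I would unpack what tameness gives us pointwise. If $\omega$ tames $J$, then $\omega(v, Jv) > 0$ for every nonzero $v \in TM$. A standard linear-algebra check shows that on any $J$-invariant real 2-plane $P \subset T_pM$, the restriction $\omega|_P$ is a positive multiple of the area form induced by the Riemannian metric $g_J := \tfrac{1}{2}(\omega(\cdot, J\cdot) + \omega(J\cdot, \cdot))$; in particular, if $e_1, Je_1$ is an oriented basis of $P$ then $\omega(e_1, Je_1) > 0$. So on every $J$-invariant 2-plane, $\omega$ is a positive area form with respect to the canonical orientation.

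Next I would integrate. By Definition \ref{subvariety}, $C$ has finite, nonzero 2-dimensional Hausdorff measure, and the complement of a finite singular set $C^{sing}$ in $C$ is a smooth submanifold with $J$-invariant tangent space. Since $C^{sing}$ has 2-dimensional Hausdorff measure zero, the integral
\begin{equation*}
\int_{C}\omega = \int_{C \setminus C^{sing}} \omega
\end{equation*}
is well defined and, by the pointwise positivity above, is strictly positive (the integrand is a positive area form on the smooth part, and the smooth part has positive Hausdorff measure). Equivalently, working through the tautological map $\phi : \Sigma \to M$ of Taubes, $\int_{\Sigma} \phi^*\omega > 0$ because $\phi^*\omega$ is a nonnegative 2-form that is strictly positive off the finite critical set of $\phi$. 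Since $e_C = \phi_*[\Sigma]$, this gives $\langle[\omega], e_C\rangle = \int_\Sigma \phi^*\omega > 0$, and hence $e_C \neq 0$ in $H_2(M;\mathbb{Z})$.

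There is no real obstacle here; the only mildly subtle point is keeping track of the measure-theoretic setup so that one is allowed to integrate $\omega$ over $C$ despite the singular locus, but this is immediate from the finite-singular-set clause in Definition \ref{subvariety}. The same argument extends verbatim to a subvariety $\Theta = \{(C_i, m_i)\}$ with $m_i > 0$, giving $\langle[\omega], e_\Theta\rangle = \sum_i m_i \int_{C_i}\omega > 0$, which is how the lemma will be used in Section 4.
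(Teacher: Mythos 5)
Your argument is correct and is the standard one: the paper states Lemma \ref{non-trivial} as a well-known fact and offers no proof, and the expected justification is exactly what you give --- positivity of a taming form on $J$-invariant $2$-planes, integrated over the smooth part of $C$ (equivalently, pulled back by the tautological map $\phi$), yielding $\langle[\omega],e_C\rangle=\int_\Sigma\phi^*\omega>0$ and hence $e_C\neq 0$. One cosmetic slip: the formula $g_J=\tfrac12\bigl(\omega(\cdot,J\cdot)+\omega(J\cdot,\cdot)\bigr)$ is antisymmetric as written (the correct symmetrization pairs the arguments as $\tfrac12\bigl(\omega(u,Jv)+\omega(v,Ju)\bigr)$), but nothing in your proof depends on it, since the key inequality $\omega(e_1,Je_1)>0$ follows directly from tameness.
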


Here is a simple consequence of this fact and positivity of intersection. 

\begin{lemma}\label{uniqueness}
Suppose $e\cdot e<0$ and  $\mathcal M_{irr, e}$ is nonempty. Then  $\mathcal M_{e}$
consists of a unique element. 
\end{lemma}

\begin{proof}
Let  $C$ be an irreducible variety in the class $e$. Suppose $\Theta=\{(C_i, m_i)\}$ is any subvariety in 
the class $e$. If each $C_i$ is distinct from $C$, then $e\cdot e=e_C\cdot \sum m_ie_{C_i}$
is non-negative. This is impossible. 

Suppose $C=C_i$ for some $i$, say $i=1$.  Then by Lemma \ref{non-trivial}, the subvariety 
$\Theta'=(C_1, m_1-1)\cup \{(C_i, m_i), i\geq 2\}$ is empty, namely, $m_1=1$ and $\Theta=\{(C_1, 1)\}$. 
\end{proof}

Another basic fact is  that  $\mathcal M_e$ is compact by the Gromov compactness.  

\subsubsection{$K_J-$spherical classes are $J-$effective}

Let $S$ be the set of homology classes of $M$ which are represented by smoothly embedded spheres.
 
The set of  $K_J-$spherical classes is defined to be  $$S_{K_J}=\{e\in S|g_J(e)=0\}.$$
\begin{prop}\label{existence}   
Let  $e$ be a class in $S_{K_J} $.
\begin{itemize}
\item  Suppose $e\cdot e\geq -1$. Then  for any symplectic form $\omega$ taming $J$, the  Gromov-Taubes  invariant 
of $e$ is nonzero. In particular, $\mathcal M_e$ is nonempty, i.e. $e$ is $J-$effective. 

\item If  $e\cdot e\ge 0$, then $M$  has to be rational or ruled, which has $b^+=1$.
\end{itemize}
\end{prop}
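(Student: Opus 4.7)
The arithmetic is immediate: $g_J(e)=0$ gives $K_J\cdot e = -e\cdot e-2$, so $e\cdot e\geq -1$ yields $\iota_e = e\cdot e + 1\geq 0$ and $K_J\cdot e\leq -1$.

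For the first bullet, the plan is to apply Taubes' SW$=$Gr theorem to reduce the nonvanishing of the Gromov-Taubes invariant to that of the Seiberg-Witten invariant of the Spin$^c$ structure $\mathfrak{s}_e$ with $c_1(\mathfrak{s}_e) = 2\,\mathrm{PD}(e) - K_J$. If $e\cdot e=-1$, then $e$ is a $(-1)$-class, and Taubes' foundational nonvanishing $SW(\mathfrak{s}_{\mathrm{can}})=\pm 1$ combined with the blow-up formula forces $SW(\mathfrak{s}_e)=\pm 1$. If $e\cdot e\geq 0$, one applies the wall-crossing formula — valid once $b^+(M)=1$ is in hand — to obtain $SW(\mathfrak{s}_e)\neq 0$ in the chamber determined by $\omega$.

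For the second bullet, take a $J$-holomorphic representative $\Theta = \{(C_i, m_i)\}$ of $e$, which exists by the first bullet. From $K_J\cdot e = \sum_i m_i (K_J\cdot e_{C_i})< 0$, some irreducible component $C_i$ has $K_J\cdot e_{C_i} < 0$, and the adjunction inequality $e_{C_i}^2 + K_J\cdot e_{C_i}\geq -2$ then forces $e_{C_i}^2\geq -1$. If $e_{C_i}^2 \geq 0$, Lemma~\ref{0} makes $C_i$ a smooth rational curve, hence a symplectically embedded sphere of non-negative self-intersection (since $J$ is tamed), and McDuff's classical structural theorem concludes that $(M,\omega)$ is a symplectic blow-up of $\mathbb{CP}^2$ or of an $S^2$-bundle over a Riemann surface. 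The case $e_{C_i}^2 = -1$ is handled by blowing down the exceptional sphere $C_i$ and iterating the argument; the hypothesis $e\cdot e\geq 0$ prevents the process from continuing indefinitely among $(-1)$-components, so eventually a component of non-negative self-intersection emerges. Since both $\mathbb{CP}^2$ and $S^2$-bundles have $b^+ = 1$ and $b^+$ is a blow-up invariant, $b^+(M)=1$.

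The main obstacle is the apparent circularity in the case $e\cdot e\geq 0$: the proposed proof of the first bullet uses $b^+(M)=1$, which is the conclusion of the second bullet, while the proof of the second bullet uses the $J$-effectiveness supplied by the first. The way to untangle this is to first produce a $J$-holomorphic representative of $e$ geometrically — via McDuff's isotopy theorem, which isotopes the smooth embedded sphere to a symplectically embedded one and, for $\omega$-tame $J$, to a $J$-holomorphic sphere — thereby establishing $J$-effectiveness independently of SW theory, then invoking McDuff's structural theorem to deduce $b^+(M)=1$, and only afterwards closing the loop with the wall-crossing computation of $SW(\mathfrak{s}_e)$ to obtain the full Gromov-Taubes nonvanishing.
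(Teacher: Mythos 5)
The paper's own proof is a two-line citation: the first bullet is quoted as a consequence of Taubes's symplectic Seiberg--Witten theory (via \cite{LL1}), and the second bullet follows from the first together with McDuff's structure theorem \cite{McD}; your overall architecture (SW $=$ Gr plus McDuff) is therefore the right one, but the step on which your whole plan hinges is not a theorem. There is no ``McDuff isotopy theorem'' asserting that a smoothly embedded sphere can be isotoped to a symplectically embedded one, let alone to a $J$-holomorphic sphere for the given tame $J$; if such a statement were true, the adjunction inequality for smooth sphere classes (itself a deep SW-theoretic fact) would be trivial. McDuff's isotopy/structure results all start from a \emph{symplectically} embedded sphere. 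Consequently the circularity you correctly identified (wall-crossing needs $b^+=1$, $b^+=1$ was to come from effectiveness) is not actually resolved by your last paragraph. The standard way to break the circle --- and what the citation to Taubes's theory encapsulates --- is that the smooth sphere enters through a \emph{vanishing} theorem, not through isotopy: if $e\cdot e\geq 0$ and $e\neq 0$ (automatic here since $K_J\cdot e\leq -2$) is represented by a smoothly embedded sphere, then all SW invariants of $M$ vanish when $b^+>1$, contradicting Taubes's nonvanishing $SW(K_J)=\pm1$ for symplectic manifolds with $b^+>1$; hence $b^+=1$ \emph{before} any effectiveness statement, and only then does one run wall-crossing in the Taubes chamber and SW $=$ Gr to get the Gromov--Taubes invariant, producing the symplectic sphere needed for \cite{McD}. (For $e\cdot e=-1$ with $b^+>1$ the clean argument is the reflection diffeomorphism in the smooth $(-1)$-sphere, which carries $K_J$ to $K_J-2e$ and so transports Taubes's nonvanishing to the Spin$^c$ structure with $c_1=2e-K_J$; your blow-up-formula phrasing needs this, and in the $b^+=1$ case it also needs chamber bookkeeping.)

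There are also gaps in your second bullet as written. From $K_J\cdot e<0$ you extract a component with $K_J\cdot e_{C_i}<0$ and $e_{C_i}\cdot e_{C_i}\geq -1$, but the jump ``if $e_{C_i}\cdot e_{C_i}\geq 0$ then Lemma \ref{0} makes $C_i$ a smooth rational curve'' is a non sequitur: Lemma \ref{0} needs $g_J(e_{C_i})=0$, and $K_J\cdot e_{C_i}<0$ with $e_{C_i}\cdot e_{C_i}\geq 0$ is compatible with arbitrarily large $g_J(e_{C_i})$ (e.g.\ $e_{C_i}\cdot e_{C_i}=3$, $K_J\cdot e_{C_i}=-1$), so you do not yet have the embedded symplectic sphere of non-negative square that \cite{McD} requires. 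Likewise the ``blow down the $(-1)$-component and iterate'' step changes the manifold, the class, and its genus, and the claim that $e\cdot e\geq 0$ terminates the process is not justified. The cleaner route, once $b^+=1$ and $\mathrm{Gr}(e)\neq 0$ are in hand, is to take a generic $\omega$-tame $J'$, for which nontriviality of the Gromov--Taubes invariant produces an embedded $J'$-holomorphic representative whose components one can analyze (or to invoke the statements of \cite{LL1} directly), and then apply \cite{McD} to conclude $M$ is rational or ruled.
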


\begin{proof}
The first statement  is a consequence of Taubes' symplectic Seiberg-Witten theory, see e.g.  \cite{LL1}. 

The second statement  follows the first statement and   \cite{McD}. 
\end{proof}


\section{Bounding the total genus}

In this section, we prove Theorem \ref{genus bound}. 
\subsection{Two simple cases}

Suppose $\Theta=\{(C_1, m_1),\cdots, (C_n, m_n)\}$. Let $e_i=e_{C_i}$.

\subsubsection{Multiplicity one}
We first deal with the  case where each $m_i$ is equal to one.

\begin{lemma}\label{simple}
Suppose $\Theta=\{(C_1, 1), \cdots, (C_n, 1)\}$, then both \eqref{bound} and \eqref{strong bound} hold. 
\end{lemma}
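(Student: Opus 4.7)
The plan is straightforward, since when all multiplicities equal $1$ the two inequalities \eqref{bound} and \eqref{strong bound} coincide: both reduce to $g_J(e) \geq \sum_i g_J(e_i)$, with $e = \sum_i e_i$. First I would expand $g_J(e)$ using bilinearity of the intersection form and of $K_J \cdot (\,\cdot\,)$. A direct computation from the definition \eqref{J-genus}, together with
\begin{equation*}
e \cdot e \;=\; \sum_i e_i \cdot e_i \;+\; 2\sum_{i<j} e_i \cdot e_j, \qquad K_J \cdot e \;=\; \sum_i K_J \cdot e_i,
\end{equation*}
yields
\begin{equation*}
g_J(e) - \sum_i g_J(e_i) \;=\; \sum_{i<j} e_i \cdot e_j \;+\; 1 \;-\; n,
\end{equation*}
so the problem reduces to showing $\sum_{i<j} e_i \cdot e_j \geq n-1$.

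Next I would invoke the connectedness of $\Theta$ together with positivity of intersections. Form the intersection graph $G$ with vertex set $\{1,\dots,n\}$ and an edge $\{i,j\}$ whenever $C_i \cap C_j \neq \emptyset$. Each irreducible component $C_i$ is connected (its model curve $\Sigma$ is connected), so if $G$ were disconnected, a partition of its vertices would yield a partition of $|\Theta|$ into two disjoint nonempty closed subsets, contradicting connectedness of $\Theta$. Hence $G$ is connected and contains at least $n-1$ edges. For every such edge, positivity of intersections of distinct irreducible $J$-holomorphic subvarieties (the same fact underlying Lemma \ref{irr=nef}) forces $e_i \cdot e_j \geq 1$. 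Summing the contributions over a spanning tree of $G$ gives $\sum_{i<j} e_i \cdot e_j \geq n-1$, which completes the proof.

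No serious obstacle is anticipated at this stage: the $J$-nef hypothesis of Theorem \ref{genus bound} is not even needed, and one only uses positivity of intersections plus a one-line graph-theoretic fact. The real work of the theorem lies in reducing the multiply-covered case to this baseline via the combinatorial curve expansion and curve combination moves, which are developed in the following lemmas; this lemma is designed to serve as the ``leaf'' of that reduction.
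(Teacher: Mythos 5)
Your proof is correct and follows essentially the same route as the paper's: you expand $g_J(e)-\sum_i g_J(e_i)$ (the paper phrases the identical computation via adjunction numbers, $adj=2g_J-2$) and bound the cross terms by $n-1$ using connectedness and positivity of intersections. The only cosmetic difference is your explicit graph-theoretic justification that connectedness of $|\Theta|$ forces at least $n-1$ intersecting pairs, which the paper states without elaboration.
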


\begin{proof}
We compare   the adjunction numbers:
$$
adj(e)=\sum_i adj(e_i) +  \sum_{i\ne j}e_i\cdot e_j.
$$
By the adjunction inequality \eqref{adj inequality}, $adj(e_i)\geq -2$. By the positivity of intersections, $e_i\cdot e_j\geq 0$ for any $i\ne j$.

If there are $l$ components, then there are at least $l-1$ transversal intersection points.
Thus 
\begin{equation}\label{simple estimate} 
\begin{array}{ll}
2g_J(e)=adj(e)+2&=\sum_i adj(e_i) +  \sum_{i\ne j}e_i\cdot e_j+2\cr

&\geq \sum_{i=1}^l adj(e_i)+2(l-1)+2 \cr

&=\sum_{i=1}^l(adj(e_i)+2)=2\sum g_J(e_i).
\end{array}
\end{equation}

\end{proof}

\subsubsection{One component}
Next we deal with the case that there is only one component.

\begin{lemma}\label{1vertex}
Suppose  $\Theta=\{(C_1, n)\}$ with  $n>1$. 
Both  \eqref{strong bound} and \eqref{bound} hold if $e_1\cdot e_1>0$. 
When $e_1\cdot e_1=0$, \eqref{bound} holds if $g_J(e_1)\geq 1$.

\end{lemma}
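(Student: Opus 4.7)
The plan is to reduce everything to one clean identity expressing $g_J(ne_1)$ in terms of $n$, $e_1\cdot e_1$, and $g_J(e_1)$, and then verify both claimed inequalities by elementary arithmetic. Since there is only one component, no combinatorial input beyond the adjunction formula is needed.

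First I would compute $adj(e)$ directly from $e = ne_1$:
\[ adj(e) = e\cdot e + K_J\cdot e = n^{2}(e_{1}\cdot e_{1}) + n(K_{J}\cdot e_{1}). \]
Rewriting $K_J\cdot e_1 = adj(e_1) - e_1\cdot e_1 = 2g_J(e_1) - 2 - e_1\cdot e_1$ and plugging in, one obtains the identity
\[ g_J(ne_1) \;=\; \binom{n}{2}(e_{1}\cdot e_{1}) \;+\; n\,g_J(e_1) \;-\; (n-1). \]

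Next, for the strong bound \eqref{strong bound}, I would rearrange this to
\[ g_J(e) - n\,g_J(e_1) \;=\; (n-1)\!\left(\tfrac{n}{2}(e_{1}\cdot e_{1}) - 1\right). \]
Since $e_1\cdot e_1$ is an integer and $e_1\cdot e_1 > 0$ means $e_1\cdot e_1 \geq 1$, together with $n\geq 2$ this expression is non-negative, giving \eqref{strong bound}. Because $n\,g_J(e_1)\geq g_J(e_1)$, the weak bound \eqref{bound} follows automatically in this case.

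For the remaining case $e_1\cdot e_1 = 0$, the identity collapses to
\[ g_J(e) - g_J(e_1) \;=\; (n-1)\bigl(g_J(e_1) - 1\bigr), \]
which is non-negative precisely under the hypothesis $g_J(e_1)\geq 1$, yielding \eqref{bound}. The only real work is bookkeeping; there is no geometric obstacle here since a single irreducible component leaves no room for intersection or configuration issues.
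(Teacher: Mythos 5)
Your proof is correct and is essentially the paper's own argument: both reduce to the identity $2[g_J(ne_1)-n\,g_J(e_1)]=(n^2-n)\,e_1\cdot e_1+(2-2n)$ and check the two cases by elementary arithmetic, your deduction of \eqref{bound} from \eqref{strong bound} using $g_J(e_1)\geq 0$ (adjunction inequality for the irreducible $C_1$) being the same implicit step the paper makes. No further comment is needed.
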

\begin{proof} 
$$2[g_J(e)- n\, g_J(e_1)] =
 (n^2-n)e_1\cdot e_1+(2-2n)$$
 If $e_1\cdot e_1>0$, then
$g_J(e)- n\, g_J(e_1) \ge (n-1)(n-2)\ge 0.$

When $e_1\cdot e_1=0$,
$$2[g_J(e)- g_J(e_1) ]=
 (n-1)K_J\cdot e_1=2(n-1)(g_J(e_1)-1).$$
\end{proof}

On the other hand, if $e_1\cdot e_1<0$, then  \eqref{strong bound} always fails and 
\eqref{bound} could fail.

\begin{example}\label{-2K}
Suppose $M=\mathbb C \mathbb P^2\#10\overline{\mathbb C \mathbb P^2}$ and there is a smooth $J-$holomorphic genus one curve $C$ in $-K_J$. Then the
subvariety $\Theta=\{(C, 2)\}$ fails \eqref{bound} since $g_J(-2K_J)=0$ and $t(\Theta)=1$. 
\end{example}

The multiplicity one  case  and the one component case are settled,  even without the $J-$nef assumption.

We next introduce moves to reduce the general case to these two simple cases.
To better describe these moves and their properties  we view  reducible curves as  graph like objects, and introduce  curve configurations.

\subsection{Nef, connected weighted graphs}
\begin{definition} Here  a weighted graph refers to a graph whose vertices are weighted 
by a pair of a $J-$effective  class $\in H_2(M;\mathbb Z)$ and a positive integer  multiplicity. 

The  edges are determined by the weighted vertices: there is an edge connecting two vertices whenever 
the intersection number of their classes is nonzero. 
Further, label each edge by the intersection number   of the classes of its vertices.

The adjunction number and the self-intersection number of each vertex are those of its homology class. 
\end{definition}

\begin{definition} A curve configuration  is  a weighted graph
satisfying the following two properties:
\begin{itemize}
 \item the adjunction number of each vertex  is at least $-2$.
 
 \item  the  label of each edge  is positive. 
\end{itemize}
\end{definition}

Specifically, to each reducible curve, we 
assign a weighted graph as follows:
to  each component $C_i$, assign the vertex, still denoted by $C_i$,  weighted by the pair
$(e_i, m_i)$.

Notice that for  each pair of intersecting components $C_i, C_j$, there is an edge connecting the corresponding vertices  labeled
by their intersecting number, and all edges arise this way.
Clearly, the resulting weighted graph is a curve configuration due to the adjunction inequality \eqref{adj inequality} and the  positivity of intersection. Moreover, 
the  curve configuration is connected as a graph if and only if the reducible curve is connected. 

Introduce the total class of a weighted graph in the obvious way. 
The adjunction number (resp. $J-$genus) of
a weighted graph is then defined to be the adjunction number (resp. $J-$genus) of its
total class. 

\begin{definition}\label{nef graph}
A weighted graph is said to be nef if its total class pairs non-negatively with the class of
each vertex.

\end{definition}

Here is an example of a nef curve configuration with total class $e$ which is not the graph 
corresponding to a $J-$holomorphic reducible curve in class $e$. 

\begin{example} \label{referee}
Suppose $M=S^2\times S^2$ with spherical classes $a=[S^2\times pt]$ and $b=[pt \times S^2]$. Let $J$ be such that $a-2b$ has a $J-$holomorphic representative. Then $e=a+2b$ is a $J-$nef class. The graph with two vertices, one labeled by $(a-b, 1)$ and the other by $(b, 3)$ is a nef curve configuration, but there is no corresponding $J-$holomorphic reducible curve. 
\end{example}

Lemmas \ref{simple} and \ref{1vertex} now takes the following form,
\begin{lemma}\label{simple-graph}
Given  a connected curve configuration,  
if the  multiplicity of each vertex is $1$,  then the sum of $J-$genus of vertices is  bounded from above by the $J-$genus of its total class.

Given any nef curve configuration  with only one vertex weighted by $(e_1, n)$, let  $e=ne_1$. Then $g_J(e) \geq  g_J(e_1)$  if $g_J(e) \geq 0$, and $g_J(e) \geq  n  g_J(e_1)$ if $e_1\cdot e_1> 0$.

\end{lemma}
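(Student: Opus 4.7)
The plan is to translate the arguments of Lemmas \ref{simple} and \ref{1vertex} into the weighted-graph setting: the two axioms of a curve configuration are precisely the ingredients (adjunction inequality; positivity of intersection) used in those proofs, and graph connectedness encodes the minimum number of intersection contributions. The one step beyond direct repetition is the ``especially'' clause in the one-vertex case, which I expect to require an extra observation.

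For the first assertion, I would bilinearly expand the adjunction number of the total class,
$$ adj(e) = \sum_i adj(e_i) + \sum_{i \ne j} e_i \cdot e_j, $$
and estimate each piece. The first axiom gives $adj(e_i) \ge -2$. For the off-diagonal sum, each edge of the graph is positively labelled and counted twice, so contributes at least $2$; connectedness on $l$ vertices guarantees at least $l-1$ edges. Adding $2$ and dividing by $2$ reproduces the chain of inequalities in \eqref{simple estimate} and yields $g_J(e) \ge \sum_i g_J(e_i)$.

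For the second assertion, I would start from the identity
$$ 2\bigl( g_J(n e_1) - n\, g_J(e_1) \bigr) = (n-1)\bigl(n\, e_1 \cdot e_1 - 2\bigr). $$
When $e_1 \cdot e_1 \ge 1$ and $n \ge 2$, the right-hand side is at least $(n-1)(n-2) \ge 0$, giving the strong bound. When $e_1 \cdot e_1 = 0$, the same identity rearranges to $g_J(n e_1) - g_J(e_1) = (n-1)\bigl(g_J(e_1) - 1\bigr)$, so the weak bound holds whenever $g_J(e_1) \ge 1$.

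For the ``especially'' clause, suppose $g_J(e) = 0$ and $e_1 \cdot e_1 = 0$. If $g_J(e_1) \ge 1$, the weak bound just proved would give $g_J(e) \ge 1$, a contradiction. Hence the adjunction axiom forces $g_J(e_1) = 0$, whereupon $K_J \cdot e_1 = -2$ and $g_J(n e_1) = 1 - n$; setting this to $0$ yields $n = 1$.
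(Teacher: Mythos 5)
Your argument is essentially the paper's: the first assertion is the computation of Lemma \ref{simple} transcribed into graph language, and the one-vertex bullets are exactly the identities in the proof of Lemma \ref{1vertex}, which is all the paper's proof does (it simply cites those two lemmas, adding only the remark that nefness plus $J$-effectiveness of $e_1$ forces $e_1\cdot e_1\ge 0$, so the two bullets exhaust the one-vertex case). Your explicit derivation of the ``especially'' clause from $g_J(ne_1)=n(g_J(e_1)-1)+1$ together with the adjunction axiom $g_J(e_1)\ge 0$ is correct and makes explicit a step the paper leaves implicit; and your restriction of the second bullet to $g_J(e_1)\ge 1$ mirrors precisely the hypothesis of Lemma \ref{1vertex} on which the paper itself relies, so it is not a gap you have introduced.
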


\begin{proof}  The first statement is exactly a  rephrase of Lemma \ref{simple} in the weighted graph language. 

For the second statement, it follows from Lemma \ref{1vertex} and the following observation: 
By Lemma \ref{irr=nef}, $e_1\cdot e_1\ge 0$ 
since $e_1$ is a $J-$effective class and the weighted graph is  nef.    
\end{proof}
 
And Theorem \ref{genus bound} follows from
 
  \begin{prop}\label{emb-comp-graph}
Given a connected, nef curve configuration whose total class has non-negative $J-$genus,  then the sum of $J-$genus of vertices is  bounded
from above by the $J-$genus of its total class. 
 
\end{prop}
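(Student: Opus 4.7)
The plan is to prove Proposition \ref{emb-comp-graph} by induction, using the two moves advertised in the introduction---curve expansion and curve combination---to reduce to the two simple situations already handled by Lemma \ref{simple-graph}: either every multiplicity is one, or the graph has a single vertex. Each move has to preserve the total class $e$ (so that $g_J(e)$ on the right is unchanged), preserve connectedness, and preserve nefness; the only quantity allowed to move is the sum $\sum_i g_J(e_i)$, and we need it to stay bounded by $g_J(e)$.

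The first move is curve expansion: given a vertex $v_i$ with $m_i \geq 2$, replace $(e_i, m_i)$ by two vertices $(e_i, m_i - 1)$ and $(e_i, 1)$, both inheriting all the edges of $v_i$. The total class is unchanged, nefness is unchanged because it depends only on which classes appear at vertices, and connectedness is preserved since the two new vertices share a common neighbor (any old neighbor of $v_i$). The effect on the sum is exactly $+g_J(e_i)$, so expansion is free whenever $g_J(e_i) = 0$. Iterating at every vertex with $m_i \geq 2$ and $g_J(e_i) = 0$ reduces to a configuration in which every multi-covered vertex has $g_J(e_i) \geq 1$. The second move, curve combination, handles such a leftover vertex by merging it with an adjacent $v_j$ into a single multiplicity-one vertex of class $m_i e_i + m_j e_j$ (or inductively into a larger combined class when several multi-covered vertices are contiguous). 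This strictly reduces the total multiplicity; what must be checked is that $g_J$ of the combined class is at least the sum of the $g_J$'s being merged, and this is where the residual assumption $g_J(e_i) \geq 1$ from Step 1 combines with the nef inequality $m_i e_i^2 + \sum_{j \neq i} m_j e_i \cdot e_j \geq 0$ to do the work.

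The main obstacle is the combination step. Expanding $2(g_J(e) - \sum g_J(e_i))$ in terms of the $m_i$, $e_i^2$, $K_J \cdot e_i$ and pairwise intersections shows that the only genuine obstruction to the bound is the term $\sum m_i(m_i - 1) e_i^2$, which is unboundedly negative when some $e_i^2 < 0$; the nef hypothesis controls it against the positive cross-term $\sum_{i \neq j} m_i m_j e_i \cdot e_j$, but turning this analytic inequality into a concrete merge that simultaneously preserves connectedness, nefness, positivity of every edge label, and the adjunction lower bound $\geq -2$ at every vertex is the real technical content. Once both moves have been validated against these invariants, iterating them terminates in a configuration covered by Lemma \ref{simple-graph}, completing the induction.
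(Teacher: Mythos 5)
Your overall scaffolding (non-decreasing moves on nef connected configurations, then quote Lemma \ref{simple-graph} at the end) matches the paper's, but the proposal has two concrete gaps that stop it from being a proof. First, your expansion move is gated by the wrong condition: you split off copies of any multi-covered vertex with $g_J(e_i)=0$, but expansion is only legitimate when $e_i\cdot e_i\ge 0$ (this is exactly the restriction in the paper's Lemma \ref{simple'}). If $e_i\cdot e_i<0$ (e.g.\ a $-2$-sphere class of genus $0$, as in the configuration $\{(H-E_1-E_2-E_3,2),(H,1),(H,1),(H,1),(E_1,1)\}$ given in the paper), the two copies you create pair negatively with each other, so the result is not a curve configuration (negative edge label), and the final multiplicity-one estimate of Lemma \ref{simple} collapses, since its proof needs every cross term $e_i\cdot e_j$ to be non-negative and at least $l-1$ of them positive. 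Second, your combination step, which you yourself flag as ``the real technical content,'' is not carried out, and the specific merge you propose (replace $(e_i,m_i),(e_j,m_j)$ by a single multiplicity-one vertex of class $m_ie_i+m_je_j$) does not work: nefness constrains only $e\cdot e_i$, not the local class, so $g_J(m_ie_i+m_je_j)$ can drop below $g_J(e_i)+g_J(e_j)$ and the new vertex can violate $adj\ge-2$ (merging $(H-E_1-E_2-E_3,2)$ with $(E_1,1)$ gives a class with $adj=-6$), destroying both the non-decreasing property of $\sum_i g_J(e_i)$ and the hypotheses needed to apply Lemma \ref{simple-graph} at the end.

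More fundamentally, the strategy of reducing entirely to ``all multiplicities one'' or ``one vertex'' cannot succeed. The paper's moves (i), (ii), (iii) are deliberately restricted (move (ii) needs $D_1\cdot D_2\ge -D_1\cdot D_1$, move (iii) needs the exact equality $\sum n_iD_i\cdot E=n_0$ at a $-1$ vertex), and after all applicable moves there can remain multiply covered vertices of negative self-intersection: the example above satisfies all three bullets of Lemma \ref{combination} and still carries a multiplicity-$2$ vertex, with no genus-non-decreasing local move available. The paper resolves this terminal case not by further reduction but by the direct adjunction-number estimate of Lemma \ref{multi 1}, which splits $adj(e)$ into adjunction, cross, and self terms and uses nefness of the total class (for the $\le -2$ vertices) together with the strict inequality in the third bullet of Lemma \ref{combination} (for the $-1$ vertices). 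That estimate is the actual heart of the proposition and has no counterpart in your proposal; without it, or a substitute argument for configurations where multi-covered negative vertices persist, the induction does not close.
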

 
 \subsection{Curve expansion and curve combination}
\subsubsection{Curve expansion} We start with moves on  vertices with non-negative self-intersection. 

Given a weighted graph, for each  vertex  $C$ with weight $(e_C, m)$ such that $e_C\cdot e_C\geq 0$
and $m>1$,   replace it by  $m$ vertices, $C(k), 1\leq k\leq m$, 
weighted by $(e_C, 1)$. This operation is called curve expansion.

\begin{lemma}\label{simple'} Given a connected curve configuration with at least two vertices, the expanded weighted graph is still a connected curve configuration. If the original configuration is nef, so is the new one. 

The sum $\sum_i g_J(e_i)$ is always non-decreasing. The sum $\sum_i m_ig_J(e_i)$ 
is non-decreasing if curve expansion is not applied to vertex  $C$ with weight $(e_C, m)$ such that $e_C\cdot e_C=0$, $m>1$ and $g_J(e_C)>0$.

Consequently, 
Proposition \ref{emb-comp-graph} is true if the multiplicity of each  vertex
with negative self-intersection is $1$.
\end{lemma}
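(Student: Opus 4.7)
The plan is to dispatch the four assertions of the lemma in the order stated, each reducing to the definitions together with an already-established fact. Curve expansion replaces a vertex $C$ of weight $(e_C,m)$ with $e_C\cdot e_C\ge 0$ by $m$ copies each of weight $(e_C,1)$. I would first verify that the result is a connected curve configuration: the adjunction numbers of all new vertices equal $adj(e_C)\ge -2$, and every edge label remains positive. Edges from the copies to their old neighbors inherit their old labels, while edges among the copies exist precisely when $e_C\cdot e_C>0$, in which case they carry that positive label. For connectedness, the hypothesis of at least two vertices is crucial: $C$ had some neighbor $C'$ in the original graph, and each copy retains its edge to $C'$, gluing the copies to each other and to the rest of the graph through $C'$. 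This is the only delicate point in the $e_C\cdot e_C=0$ subcase, where the copies share no direct edge.

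Next, nef-ness is immediate: the total class $e=\sum_i m_i e_i$ is unchanged by expansion, so its pairing with any vertex class is unchanged. For the genus sums, the only modification concerns $C$ itself. Its contribution to $\sum_i g_J(e_i)$ passes from $g_J(e_C)$ to $m\,g_J(e_C)$, and $g_J(e_C)\ge 0$ by the adjunction constraint $adj(e_C)\ge -2$, so that sum is non-decreasing. Its contribution to $\sum_i m_i g_J(e_i)$ is $m\,g_J(e_C)$ both before and after, so under a single expansion that sum is in fact preserved; the exceptional case flagged in the statement is relevant only in combination with later moves.

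Finally, for the consequence about Proposition \ref{emb-comp-graph}: starting with a connected, nef curve configuration whose vertices of negative self-intersection already have multiplicity $1$, I would iteratively apply curve expansion to every vertex of non-negative self-intersection with multiplicity $>1$. The process terminates in finitely many steps, since at each step the number of high-multiplicity vertices strictly decreases. The outcome is a connected, nef curve configuration with the same total class $e$ in which every vertex has multiplicity $1$. Applying the first statement of Lemma \ref{simple-graph} then gives $g_J(e)\ge \sum_j g_J(e_j)$ for the terminal configuration, and since $\sum_i g_J(e_i)$ was only increased along the way, the same bound holds for the original configuration. The main obstacle in the argument is really the connectedness step when $e_C\cdot e_C=0$, since that is the one place where the ``at least two vertices'' hypothesis is genuinely used; the remaining assertions are essentially bookkeeping on top of Lemmas \ref{simple} and \ref{1vertex}.
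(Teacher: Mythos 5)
Your proof is correct and follows the paper's overall route (direct verification that expansion preserves the configuration axioms, connectedness via an old neighbor of $C$ using the two-vertex hypothesis, then reduction to the first statement of Lemma \ref{simple-graph}), but the genus bookkeeping is handled differently, and in a way worth noting. You compute the effect of the move as actually defined: the unweighted contribution goes from $g_J(e_C)$ to $m\,g_J(e_C)$ (non-decreasing since $adj(e_C)\ge -2$ forces $g_J(e_C)\ge 0$), and the weighted contribution is $m\,g_J(e_C)$ before and after, so $\sum_i m_i g_J(e_i)$ is in fact \emph{preserved} and the exceptional hypothesis is not needed for a single expansion. The paper instead invokes the inequalities $g_J(me_C)\ge g_J(e_C)$ and $g_J(me_C)\ge m\,g_J(e_C)$ from Lemma \ref{1vertex}; those comparisons concern a vertex carrying the total class $me_C$ (the situation relevant to the one-vertex case and to the later ``stronger bound'' discussion, where the square-zero, positive-genus case genuinely obstructs $g_J(me_C)\ge mg_J(e_C)$), not the literal replacement of $(e_C,m)$ by $m$ copies of $(e_C,1)$; your accounting is the one that matches the stated move. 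One caveat on your ``Consequently'' step: your iteration is complete only when the configuration has at least two vertices (which is when the expansion statement applies); if the configuration is a single vertex $(e_1,n)$ with $e_1\cdot e_1\ge 0$ and $n>1$, expanding it can disconnect the graph (when $e_1\cdot e_1=0$), so the first statement of Lemma \ref{simple-graph} is unavailable, and that case must be dispatched directly by the nef one-vertex statement (second half of Lemma \ref{simple-graph}, i.e.\ Lemma \ref{1vertex}), exactly as the paper does at the start of the proof of Proposition \ref{emb-comp-graph}. With that case noted separately, your argument gives everything the lemma claims.
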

\begin{proof} Consider  the expanded curve configuration.

Notice that the new vertices $C(k)$ have the same first weight and then the same adjunction number as that of $C$. 

There are two kinds of new edges. 
If there is an edge connecting $C$ with another vertex $D$ in the original curve configuration, then there is an edge joining $D$ with each $C(k)$ by an edge with the same positive label. 
Therefore the resulting weighted graph is connected. 
If  the self-intersection number of $C$ is positive, then there is an edge joining each pair of $C(k)$. 
Since the labels of these edges are also positive,   
 the resulting weighted graph is  a connected curve configuration with the same total class and the same total multiplicity. 

The genus estimates essentially follow from Lemma \ref{1vertex}. The inequality $g_J(me_C)\ge  g_J(e_C)$ always holds when $e_C\cdot e_C\ge 0$. Hence $\sum_i g_J(e_i)$ is non-decreasing. If we are not applying expansion for $(e_C, m)$ with $e_C\cdot e_C=0$, $m>1$ and $g_J(e_C)>0$, the strong inequality $g_J(me_C)\ge  mg_J(e_C)$ holds, which implies $\sum_i m_ig_J(e_i)$ 
is non-decreasing.
\end{proof}

Thus  we may assume all the vertices with non-negative self-intersection have multiplicity $1$.   

Next we deal with  vertices with negative self-intersections, especially $-1$ vertices.  
Here a vertex is called a $-1$ vertex if its class has self-intersection $-1$.

\subsubsection{Curve combination} \label{notation}
Given a connected curve configuration with the property that any vertex with multiplicity greater than $1$ has negative self-intersection.  

\begin{enumerate}[(i)]
\item Suppose there are two adjoined vertices $V_1, V_2$ weighted by $(D_i, n_i)$ with   $n_1=n_2=n$. 
Collapse  them to a  vertex $V$  weighted by
$(D_1+D_2, n)$. We call this move (i)$_n$.

\item Suppose there are two adjoined  vertices $V_1, V_2$ weighted by $(D_i, n_i)$  with $n_1>n_2$, and 
  $D_1\cdot D_2\ge -D_1\cdot D_1$. 
  Replace them 
  by two vertices $V, V'$ weighted by $(D_1+D_2, n_2)$ and $(D_1, n_1-n_2)$ respectively.

\item Suppose there  is a $-1$ vertex  $E$ with multiplicity $n_0$, and  there are neighboring vertices  weighted by $(D_i, n_i), 1\leq i\leq t,$ with  $D_i\cdot D_i\leq -2, 1\leq i\leq t$
 and $$n_1D_1\cdot E+\cdots+n_tD_t\cdot E= n_0.$$ 
 Replace  them by  $t$ vertices weighted by  $(D_i+(D_i\cdot E) E,   n_i), 1\leq i\leq t$. 
 Notice that here we allow  $n_i= 1$.
 
 To record the  value of $t$, we sometimes call this move (iii)$_t$.
\end{enumerate}

The following simple observation is crucial for us:
\begin{lemma}\label{adj=-2}
If we apply any of the three moves above to  a connected, nef curve configuration, the new  weighted graph is a connected, nef  curve configuration
with the same total class. Moreover, it has  the following properties:

\begin{itemize}

\item  The sum of the multiplicities  of vertices   gets smaller. 
 
\item   The sum  $\sum_i g_J(e_i)$ is non-decreasing for any curve combination 
move.

\item  $\sum_i m_ig_J(e_i)$ is also non-decreasing for any curve combination 
move.
\end{itemize}
 \end{lemma}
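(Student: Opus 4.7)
The plan is a direct, case-by-case verification of the conclusions for each of the three moves, relying on the bilinear identity
\[
g_J(a+b)=g_J(a)+g_J(b)+a\cdot b-1,
\]
together with the equations $E\cdot E=-1$ and $\sum_i n_i(D_i\cdot E)=n_0$ in move (iii). Throughout, write $e$ for the total class and abbreviate $k_i=D_i\cdot E$.

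First I would handle the bookkeeping items. Total class preservation is immediate: in (i)$_n$, $nD_1+nD_2=n(D_1+D_2)$; in (ii), $n_1D_1+n_2D_2=n_2(D_1+D_2)+(n_1-n_2)D_1$; and in (iii)$_t$, $\sum n_i(D_i+k_iE)=\sum n_i D_i+n_0 E$. Since every new vertex class is a non-negative integer combination of the old vertex classes and $e$ is unchanged, pairing $e$ with each new vertex class remains non-negative, so the nef property is inherited. The total multiplicity strictly drops, by $n$, $n_2$, and $n_0$ respectively. For the curve configuration property, direct computation gives $adj(D_1+D_2)=adj(D_1)+adj(D_2)+2D_1\cdot D_2\ge -2$ (each summand $\ge -2$ and $D_1\cdot D_2\ge 1$) and $adj(D_i+k_iE)=adj(D_i)+k_i(k_i+1+adj(E))\ge adj(D_i)\ge -2$ (since $k_i\ge 1$, $adj(E)\ge -2$). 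Edges of the new graph have positive labels because the relevant intersections are non-negative sums with a strictly positive contribution; in particular $(D_i+k_iE)\cdot(D_j+k_jE)=D_i\cdot D_j+k_ik_j\ge 1$.

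Next I would verify the two genus inequalities. For (i)$_n$, $\Delta\sum g_J(e_i)=D_1\cdot D_2-1\ge 0$ and $\Delta\sum m_ig_J(e_i)=n(D_1\cdot D_2-1)\ge 0$. For (ii), $\Delta\sum g_J(e_i)=g_J(D_1)+D_1\cdot D_2-1\ge 0$ and $\Delta\sum m_ig_J(e_i)=n_2(D_1\cdot D_2-1)\ge 0$. For (iii)$_t$, using $K_J\cdot E=2g_J(E)-1$ one derives
\[
g_J(D_i+k_iE)-g_J(D_i)=\tfrac{k_i(k_i-1)}{2}+k_ig_J(E),
\]
so
\[
\Delta\sum g_J(e_i)=\sum_i\tfrac{k_i(k_i-1)}{2}+\Big(\sum_i k_i-1\Big)g_J(E)\ge 0,
\]
and, using $\sum n_ik_i=n_0$,
\[
\Delta\sum m_ig_J(e_i)=\tfrac{1}{2}\sum_i n_ik_i(k_i-1)\ge 0.
\]

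The main obstacle is preserving connectedness under moves (ii) and (iii). For (ii), the potentially bad case is that the new vertex $V'$ with class $D_1$ gets disconnected; this can only happen if $V_1$'s only original neighbor was $V_2$ and the $V\leftrightarrow V'$ edge fails, i.e.\ $D_1^2+D_1\cdot D_2=0$. But then $e\cdot D_1=n_1D_1^2+n_2D_1\cdot D_2=(n_2-n_1)D_1\cdot D_2<0$ since $n_1>n_2$ and $D_1\cdot D_2>0$, contradicting nef-ness; so either $V_1$ has another original neighbor (which remains adjacent to $V'$) or the $V\leftrightarrow V'$ edge exists. For (iii), the clique identity $(D_i+k_iE)\cdot(D_j+k_jE)>0$ binds the modified neighbors of $E$ into one connected block, and since the original graph is connected, each connected component of the graph with $E$ removed contains some neighbor of $E$ and therefore attaches to this block after the move. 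This gives connectedness.
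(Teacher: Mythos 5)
Your proposal is correct and follows essentially the same route as the paper: a case-by-case verification of the adjunction and label conditions via the bilinearity of $g_J$ (equivalent to the paper's estimates \eqref{move12} and \eqref{move3adj}), the nefness of the original graph to rule out disconnection in move (ii), and the clique property of the new vertices to handle move (iii). Your exact difference formulas for $\sum g_J(e_i)$ and $\sum m_i g_J(e_i)$, and your attachment argument via components of the graph with $E$ removed, are slightly more explicit than the paper's but amount to the same argument.
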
 
\begin{proof} Firstly, we notice that the first weight of each vertex is still a $J-$effective class since it is a linear combination of that of old vertices with non-negative coefficients.

To show that  the new configuration is a curve configuration,
we first verify  the adj condition: 
\begin{equation}\label{move12}
adj(D_1+D_2)\ge 2+ adj(D_1) + adj(D_2) \ge -2
\end{equation}
 for moves (i) and (ii), and 
\begin{equation}\label{move3adj}
adj(D_i+(D_i\cdot E)E)=adj(D_i)+(D_i\cdot E)^2+(2g_J(E)-1)D_i\cdot E\ge -2
\end{equation}
 for move (iii). 

Next we verify the label condition. Clear for moves (i) and (ii). For move (iii),  the label of each new edge 
is 
 $$(D_i+(D_i\cdot E) E)\cdot (D_j+(D_j\cdot E) E)=D_i\cdot D_j+(D_i\cdot E)\cdot (D_j\cdot E)>0.$$

Let us prove the curve configuration is connected. It is clear for move (i). For move (iii), consider
the collection of new vertices.  The sum of their classes is 
the sum of the classes of the replaced vertices, so at least one of new vertices 
  is  connected to the rest of the configuration. Moreover, any  two new 
vertices are adjoined to each other since we have shown that $(D_i+(D_i\cdot E) E)\cdot (D_j+(D_j\cdot E) E)>0.$

For move (ii), we need the nefness condition. If $V_1$ is connected to another vertex
 in the original configuration other than $V_2$, then the new vertex 
 $V'$ is adjoined to  the same vertex. Hence, the new configuration is connected. 
Otherwise, only $V_2$ is connected to other vertices. 
The graph is assumed to be nef, thus 
$$e\cdot D_1=(n_1-n_2)D_1\cdot D_1+n_2(D_1\cdot D_1+D_1\cdot D_2)\ge 0,$$ 
which implies $$D_1\cdot (D_1+D_2)\ge \frac{n_1-n_2}{n_2}(-D_1\cdot D_1)>0.$$
 This shows the new configuration is still connected since 
$V$ is connected to the vertices that $D_2$ was connected to.

For the first bullet of the properties, the sum of multiplicities are reduced by 
$n_2$ for the first two moves, and $n_0$ for the third. 

For the second and the third bullets, the conclusion for 
first two moves follows from \eqref{move12}. For the third move, 
\eqref{move3adj} implies 
$$adj(D_i+(D_i\cdot E)E)\ge adj(D_i)+2(D_i\cdot E)g_J(E).$$
This shows 

$$\begin{array}
  {ll}
&\sum_{i=1}^tn_i g_J(D_i+(D_i\cdot E)E)\\
\\
&\ge \sum_{i=1}^t(n_ig_J(D_i)+n_iD_i\cdot Eg_J(E)) \\
\\
&=\sum_{i=1}^tn_ig_J(D_i)+n_0g_J(E).
\end{array}
$$
Similarly, $\sum_{i=1}^tg_J(D_i+(D_i\cdot E)E)\ge \sum_{i=1}^tg_J(D_i)+g_J(E)$.
\end{proof}

Here is an example how to apply these   moves. 

\begin{example}
Consider the curve configuration in $\mathbb C \mathbb P^2\# 5\overline{\mathbb C \mathbb P^2}$ with $4$ vertices weighted by 
 $$(H-E_1-E_2-E_3, n), (H-E_1-E_4-E_5, n), (E_1, 2n), (2H-E_2-E_3-E_4, 1).$$ 
 The total class is  $$(2n+2)H-(n+1)E_2-(n+1)E_3-(n+1)E_4-nE_5,$$ which has $J-$genus $0$ and is Cremona equivalent to $(n+1)H-nE_1$. Here Cremona equivalence refers to the equivalence 
 under the group of  diffeomorphisms preserving the canonical class $K_J$. 
 
 First apply move  (iii) to the $-1$ vertex $(E_1, 2n)$
  to obtain the curve configuration   with $3$ vertices weighted by 
  $$(H-E_2-E_3, n), (H-E_4-E_5, n), (2H-E_2-E_3-E_4, 1).$$ Then apply move (i) to the first two vertices to obtain 
the curve configuration   with $2$ vertices weighted by   
$$(2H-E_2-E_3-E_4-E_5, n), (2H-E_2-E_3-E_4, 1).$$

\end{example}

\subsection{Nef, connected curve configuration with at least two vertices}
\subsubsection{Rearrangement}
\begin{lemma}\label{combination} Suppose a connected, nef curve configuration has at least two vertices. 
After applying curve expansion and  appropriate curve combination moves (i), (ii), (iii) to  $-1$ vertices,  we would end up with a connected, nef curve configuration such that 
\begin{itemize}

\item All vertices 
with non-negative self-intersection have multiplicity $1$;

\item The $-1$ vertices  are not adjoined  to each other. Moreover,  any $-1$ vertex is not adjoined to a vertex  with non-negative 
self-intersection;
\item If vertices weighted by $(D_i, n_i), 1\leq i\leq t$  with  $D_i\cdot D_i\leq -2$ are all adjoined to a $-1$ vertex  $(E, n_0)$, then 
$$n_1D_1\cdot E+\cdots+n_tD_t\cdot E>n_0.$$
\end{itemize}
\end{lemma}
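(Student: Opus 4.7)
I will argue by induction on the total multiplicity $N=\sum_i m_i$. The base step is to apply curve expansion to every vertex whose class has non-negative self-intersection and whose multiplicity exceeds $1$; by Lemma \ref{simple'} this preserves nefness, connectedness, and the curve-configuration property, and it establishes condition (1) without changing $N$. If conditions (2) and (3) then hold we are finished; otherwise I will show that some move (i), (ii), or (iii) applies to a $-1$ vertex, which by Lemma \ref{adj=-2} yields a new connected, nef configuration with strictly smaller $N$, after which I re-expand as needed and invoke the inductive hypothesis.

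\textbf{Handling a failure of (2).} Suppose two $-1$ vertices $V_1=(E_1,n_1)$ and $V_2=(E_2,n_2)$ are adjoined; relabel so that $n_1\ge n_2$. Positivity of the edge label gives $E_1\cdot E_2\ge 1=-E_1\cdot E_1$. When $n_1=n_2$ apply move (i)$_{n_1}$; when $n_1>n_2$ apply move (ii) with $D_1=E_1$, whose hypothesis $D_1\cdot D_2\ge -D_1\cdot D_1$ is exactly the inequality just recorded. Suppose instead that a $-1$ vertex $(E,n_0)$ is adjoined to a vertex $(D,1)$ with $D\cdot D\ge 0$ (the multiplicity being $1$ thanks to condition (1) already secured). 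Then $E\cdot D\ge 1=-E\cdot E$: if $n_0=1$ apply move (i)$_1$, and if $n_0>1$ apply move (ii) with $D_1=E$, $D_2=D$.

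\textbf{Handling a failure of (3).} Assume conditions (1) and (2) both hold, and fix a $-1$ vertex $E$ with multiplicity $n_0$. By (2) every neighbor of $E$ has self-intersection $\le -2$; list these neighbors as $(D_1,n_1),\ldots,(D_t,n_t)$. Non-neighbors pair to zero with $E$, so the nefness of the total class $e$ yields
\[ 0\;\le\;e\cdot E\;=\;n_0\,E\cdot E\,+\,\sum_{i=1}^{t} n_i\,D_i\cdot E\;=\;-n_0\,+\,\sum_{i=1}^{t} n_i\,D_i\cdot E, \]
hence $\sum_i n_i D_i\cdot E\ge n_0$. If condition (3) fails at $E$ the inequality must be an equality, which is precisely the hypothesis of move (iii); apply it.

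\textbf{Termination and the main obstacle.} Every application of (i), (ii), (iii) strictly decreases $N$ by Lemma \ref{adj=-2}, whereas an expansion sweep keeps $N$ fixed and can be carried out once between consecutive combination steps to restore (1). Since $N$ is a positive integer, the alternating procedure terminates, and the terminal configuration must satisfy all three conditions. The main obstacle is bookkeeping rather than insight: several combination moves can create new vertices of non-negative self-intersection with multiplicity exceeding $1$ --- for instance move (i)$_n$ on two adjoined $-1$ vertices produces $(E_1+E_2,n)$ of non-negative self-intersection, and move (iii) does so when $(D_i\cdot E)^2>-D_i\cdot D_i$ --- so one must re-expand after every combination step and verify that nefness and connectedness continue to hold (both guaranteed by Lemma \ref{adj=-2}). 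The only genuinely non-mechanical step is the computation in the (3) case, which converts the global nef hypothesis into the precise local identity $\sum_i n_iD_i\cdot E=n_0$ needed to license move (iii).
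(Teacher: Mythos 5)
Your argument is essentially the paper's own proof: the same expansion sweep to force multiplicity one on vertices of non-negative self-intersection, moves (i)/(ii) to separate $-1$ vertices from each other and from non-negative vertices, the identical nefness computation $e\cdot E=\sum_i n_iD_i\cdot E-n_0\ge 0$ whose equality case triggers move (iii), and termination via the strict decrease of total multiplicity under combination moves (first bullet of Lemma \ref{adj=-2}) together with the fact that expansions preserve it. The only differences are cosmetic: the paper begins by applying (i) around $-1$ vertices and phrases termination as a finite-step count rather than induction on $N$, but the content coincides.
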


\begin{proof}
We apply move (i)  first to each $-1$ vertex. 
After this is done we could assume that, for any $-1$ vertex, its multiplicity $m$ is different from the multiplicity of any adjoined vertex. 

We now apply move (ii) to each $-1$ vertex  whenever it   is adjoined to a vertex  with   self-intersection at least $-1$.

After  applying moves (i) and (ii) repeatedly,  we could assume  that the second bullet is valid. 

Given a $-1$ vertex weighted by  $(E, n_0)$, suppose the  vertices that are adjoined to it are weighted by  $(D_i, n_i), 1\leq i\leq t$. Observe that, by the second bullet, each $D_i$ has
self-intersection $\le -2$ and 
$$e\cdot E=n_1D_1\cdot E+\cdots+n_tD_t\cdot E-n_0.$$ 
Since the total class $e$ is $J-$nef, we have a priori that $$n_0\leq n_1D_1\cdot E+\cdots+n_tD_t\cdot E.$$

If $n_0=n_1D_1\cdot E+\cdots+n_tD_t\cdot E$, we are then in the situation to apply move  (iii).
 This move may actually produce new $-1$ vertices and even vertices with non-negative self-intersection. 
If so, we apply curve expansion and curve combination moves (ii) and (i) again to rearrange so that the 
 first and the second bullets are valid.

We notice that  such rearrangement would stop in finite steps. This is because:  (a)    the total multiplicity is 
  preserved after curve expansion, and it is reduced after each curve combination by the  first bullet of  Lemma \ref{adj=-2}, so we could only apply finitely many curve combination moves, 
  (b) between two curve combination moves, the number of  curve expansions is bounded by  the total multiplicity.

\end{proof}

\subsubsection{After rearrangement}

\begin{lemma}\label{multi 1} For a connected, nef curve configuration satisfying all the three bullets in Lemma \ref{combination}, 
if there is a vertex with  multiplicity greater than $1$, then  
 the $J-$genus satisfies
$$g_J(e)\ge 1+ \sum_i m_ig_J(e_i).$$

\end{lemma}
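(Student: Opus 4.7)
Plan. A routine manipulation of the adjunction formula gives
\[
2\bigl(g_J(e) - \textstyle\sum_i m_i g_J(e_i)\bigr) = \sum_i m_i f_i + 2,\qquad f_i := e\cdot e_i - e_i^2 - 2,
\]
so the claim is equivalent to $\sum_i m_i f_i \ge 0$. Writing $V^+,V^-,V^{--}$ for the vertices with $e_i^2\ge 0$, $e_i^2=-1$, $e_i^2\le -2$ respectively, the three bullets of Lemma~\ref{combination} give sign control of each $f_i$: for $D\in V^{--}$, $J$-nefness yields $e\cdot e_D\ge 0$, so $f_D\ge -e_D^2-2\ge 0$; for $E\in V^-$, the third bullet yields $e\cdot e_E\ge 1$, so $f_E\ge 0$; and for $C\in V^+$, the first bullet forces $m_C=1$ and $f_C = w_C-2$, where $w_C := \sum_{j\sim C} m_j(e_j\cdot e_C)$ is the weighted degree and, by the second bullet, each neighbour of $C$ lies in $V^+\cup V^{--}$. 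Only $V^+$ leaves whose unique neighbour has multiplicity one and edge label one can contribute negatively, each by exactly $-1$.

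Aggregate next. Using $\sum_i m_i w_i = 2\sum_{\text{edges }\{i,j\}} m_i m_j(e_i\cdot e_j)$ together with the non-negative bullet-3 slack $\mu_E := n_E\bigl(\sum_{D\sim E} m_D(D\cdot E) - n_E - 1\bigr)$ and the non-negative nefness slack $\nu_D := m_D(w_D - m_D k_D)$ (with $k_D := -e_D^2$), a direct computation yields the identity
\[
\sum_i m_i f_i = \Bigl(\sum_{V^+} w_C + \sum_{V^{--}} m_D(k_D - 2) - 2|V^+|\Bigr) + \sum_{V^-}\mu_E + \sum_{V^{--}}\nu_D.
\]
Decomposing the induced subgraph on $V^+$ into its connected components $G^+_1,\dots,G^+_c$, each component must attach to the rest of the configuration through at least one external edge---otherwise the whole graph equals $G^+_k$, which contains no vertex of multiplicity greater than one, contradicting the hypothesis. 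Standard graph-theoretic considerations then show that the bracketed quantity is at least $-b$, where $b$ is the number of \emph{bad} $V^+$-components: trees joined to the rest by a single external edge of label one whose far endpoint has multiplicity one.

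The main obstacle is to absorb these $b$ units of deficit using the slacks $\sum\mu_E + \sum\nu_D$. I would argue this by propagation: from each bad component, trace the chain of multiplicity-one $V^{--}$ vertices leading to some vertex $V_*$ of multiplicity greater than one, which must exist by the hypothesis and connectedness. If $V_*\in V^{--}$, nefness at $V_*$ forces $\sum_{D\sim V_*} m_D(D\cdot V_*) \ge m_{V_*} k_{V_*} \ge 4$, so at least one chain-neighbour $D$ of $V_*$ receives a contribution to $w_D$ large enough to yield $\nu_D\ge 1$. If $V_*\in V^-$, the third bullet analogously forces a large $\mu_{V_*}$ or $\nu_D$ at the chain-neighbour. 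In either case the slack comfortably exceeds the single unit of deficit associated with the chain, establishing $\sum m_i f_i\ge 0$.
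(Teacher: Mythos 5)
Your first half is sound: the reduction to $\sum_i m_i f_i\ge 0$, the per-vertex signs (nefness giving $f_D\ge 0$ on $V^{--}$, bullets two and three giving $f_E\ge 0$ on $V^-$, and $f_C=w_C-2$ with $m_C=1$ on $V^+$), the displayed identity, and the lower bound $-b$ for the bracketed term are all correct, and up to this point you are essentially running the same computation as the paper. The genuine gap is the absorption step, which is precisely the heart of the lemma and is only sketched, with a mechanism that does not work as stated. First, nefness at $V_*$ controls $w_{V_*}$, not $w_D$: what gives the chain-neighbour $D$ a contribution of at least $2$ is simply $m_{V_*}\ge 2$, and even granting that, the conclusion $\nu_D\ge 1$ fails when $k_D\ge 3$ (e.g.\ $w_D=3=k_D$ gives $\nu_D=0$; the missing unit then sits in $m_D(k_D-2)$, which you never say you are re-allocating). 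Second, the path from a bad component to a vertex of multiplicity greater than one need not consist of multiplicity-one $V^{--}$ vertices: it can pass through other $V^+$ vertices and through multiplicity-one $-1$ vertices, cases your propagation does not treat. Third, and most seriously, several bad components can feed the same chain or terminate at the same vertex $D$, and you assign each of them ``the'' slack there without showing these units are distinct; no injective assignment of slack to deficits is constructed, so nothing rules out $b$ bad components competing for one unit of $\nu_D$.

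The clean repair is to abandon the component-by-component bookkeeping rather than to patch the propagation. Note that every multiplicity-one vertex, of whatever self-intersection, contributes exactly $w_i-2$ to $\sum_i m_i f_i$ (for $D\in V^{--}$ with $m_D=1$ one has $\nu_D+m_D(k_D-2)=w_D-2$, and $\mu_E=w_E-2$ for a multiplicity-one $-1$ vertex), while each multiplicity-$\ge 2$ vertex contributes $m_i\bigl[w_i+(m_i-1)e_i^2-2\bigr]\ge 0$, by nefness when $e_i^2\le -2$ and by the third bullet of Lemma \ref{combination} when $e_i^2=-1$. Now contract all multiplicity-$\ge 2$ vertices to a single node: the contracted graph is connected, a spanning tree has as many edges as there are multiplicity-one vertices, and every tree edge contributes at least $2$ to $\sum_{m_i=1}w_i$, namely $1+1$ if it joins two multiplicity-one vertices and $m_j\lambda\ge 2$ if it joins a multiplicity-one vertex to a multiplicity-$\ge 2$ one. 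Hence $\sum_{m_i=1}(w_i-2)\ge 0$ and no deficit ever needs to be propagated. This global count is exactly the paper's argument (its cross-term estimate by $2(l-s)$ together with its Cases I and II), so you should replace the propagation paragraph by an argument of this kind.
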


This technical lemma will be proved in the next subsection.

\begin{example}

 Here is one  example of a connected, nef curve configuration satisfying all the three bullets in Lemma \ref{combination}, 
and having a vertex with  multiplicity greater than $1$: 
   $\Theta=\{(C_1,2), (C_2, 1), (C_3, 1), (C_4, 1), (C_5, 1)\}$ with 
 $$ e_{C_1}=H-E_1-E_2-E_3, \quad e_{C_2}=e_{C_3}=e_{C_4}=H, \quad e_{C_5}=E_1.$$
Here $e=5H-E_1-2E_2-2E_3$, and $g_J(e)=4$.  

\end{example}

\subsection{Proof of Proposition \ref{emb-comp-graph} and Lemma \ref{multi 1}} 
We first prove Proposition \ref{emb-comp-graph}, which assumes Lemma \ref{multi 1}.
\subsubsection{Proposition \ref{emb-comp-graph}}
 \begin {proof}  [Proof of Proposition \ref{emb-comp-graph}] 
 One vertex curve configuration case follows from the second half of Lemma \ref{simple-graph}. Hence we assume there are at least two vertices.

 Denote the curve configuration by $G=\{(e_i, m_i)\}$. We apply the moves to get a curve configuration $G'=\{(e_j', m_j')\}$ as in Lemma \ref{combination}. 
By the second bullet  of Lemma \ref{adj=-2} and Lemma \ref{simple'}, the sum $\sum_i g_J(e_i)$ is non-decreasing for any curve expansion and curve combination 
move.

   By Lemma \ref{multi 1}, if there is a vertex with multiplicity greater than $1$, 
then 
$$\begin{array} {ll}g_J(e)&\geq \sum_j m_j'g_J(e_j')+1\\
\\
&\geq \sum_j g_J(e_j')+1\\
\\
&\ge \sum_i g_J(e_i)+1.
\end{array}$$
If the multiplicity of each vertex is $1$, apply  the first statement of Lemma \ref{simple-graph} instead of Lemma \ref{multi 1}, we obtain similarly $g_J(e)\geq \sum_i g_J(e_i)$.

\end{proof}

\subsubsection{A stronger bound}
In fact, we can establish the stronger estimate
  $$ g_J(e)\ge \sum_i m_ig_J(e_{C_i}),$$
if there is no vertex having class $me'$ with $e'\cdot e'=0$, $m\geq 2$ and $g_J(e')\geq 1$ in any intermediate step of the rearrangement.

First of all,  during the arrangement, if we never need to apply curve expansion for vertex  $C$ with weight $(e_C, m)$ such that $e_C\cdot e_C=0$, $m>1$ and $g_J(e_C)>0$, 
the sum $\sum_i m_ig_J(e_i)$ is non-decreasing by the third bullet of Lemma \ref{adj=-2} and Lemma \ref{simple'}. 

After the arrangement, if there is a vertex with multiplicity greater than $1$, 
then by Lemma \ref{multi 1}, 
$$\begin{array} {ll}g_J(e)&\geq \sum_j m_j'g_J(e_j')+1\\
\\
&\geq \sum_i m_ig_J(e_i)+1.\\
\end{array}$$
If the multiplicity of each vertex is $1$, apply the first statement of  Lemma \ref{simple-graph} instead of Lemma \ref{multi 1}, we obtain $g_J(e)\geq \sum_i m_ig_J(e_i)$.

\subsubsection{Lemma \ref{multi 1}}
It remains to prove Lemma \ref{multi 1}.

\begin{proof} [Proof of Lemma \ref{multi 1}]
For a configuration as in Lemma \ref{combination}, we suppose there are $l$ vertices weighted by  $(e_i, m_i)$. Moreover, we define $s_1, s_2, s_3$ by requiring that
\begin{itemize}
\item If $1\le i\le s_1$, then $m_i\ge 2$ and $e_i\cdot e_i\le -2$;
\item If $s_1+1\le i\le s_1+ s_2$, then $m_i\ge 2$ and $e_i\cdot e_i=-1$;
\item if $s_1+s_2+1\le i\le s_1+s_2+  s_3$, then $m_i=1$ and $e_i\cdot e_i\le -1$;
\item If $s_1+s_2+ s_3+1\le i\le l$, then $m_i=1$ and $e_i\cdot e_i\ge 0$.
\end{itemize}

We further let 
$$s=s_1+s_2.$$
With this understood we set up to show that if $s>0$ then 
$$adj(e)\ge \sum_i m_i(adj(e_i)+2).$$

If $1\leq i\leq s$, write $m_ie_i=\sum_{1\le k\le m_i}e_i(k)$, with each $e_i(k)=e_i$.
$$\begin{array}{ll}
adj(e) &=\sum_{1\leq i\leq s} \sum_{1\leq k\leq m_i} adj(e_i(k))\\
\\
&+ \sum_{1\leq i\leq s} \sum_{1\leq k\ne k'\leq m_i} e_i(k)\cdot e_i(k')+  \sum_{1\leq i\leq s} (m_ie_i) (e-m_ie_i)\\
\\
&+\sum_{j>s} adj(e_j)+ \sum_{j>s} e_j\cdot \sum_{i\leq s} m_ie_i+ \sum_{ j, k>s, j\ne k} e_j\cdot e_k.\\ 

\end{array}
$$

The adjunction terms satisfy
$$
\sum_{1\leq i\leq s} \sum_{1\leq k\leq m_i} adj(e_i(k))+ \sum_{j>s} adj(e_j)=  \sum_{i=1}^l m_i\cdot adj(e_i).
$$

We claim that  the cross terms satisfy $$\sum_{j>s} e_j\cdot \sum_{i\leq s} m_ie_i+ \sum_{ j, k >s,  j\ne k} e_j\cdot e_k\geq  2(l-s).$$
To justify the claim, introduce $\alpha=e_1+\cdots+e_s$, and  rewrite  
as  \begin{equation}\label{cross}
       \sum_{j, k>s,  j\ne k  } e_j\cdot e_k+2\sum_{j>s} e_j\cdot  \alpha    +\sum_{j>s} e_j\cdot \sum_{i\leq s} (m_i-2)e_i.
\end{equation}
Since $m_i\geq 2$ for $i\leq s$, the last term of (\ref{cross}) is non-negative. 

To estimate the first two terms of (\ref{cross}),  view the portion of the configuration  involving vertices with 
 $i\le s=s_1+s_2$ as one single vertex $(\alpha, 1)$. Notice here we use the assumption $s>0$. Along with the remaining $l-s$ vertices, 
we obtain a graph with $l-s+1$ vertices. 
Notice that this graph is still connected. 

Twice of the total labeling of this new graph  is exactly the sum of the first two terms. For this graph of $l-s+1$ vertices  to be connected, we need at least $l-s$ edges.
Since each label is positive, we obtain the desired estimate.

The remaining terms
\begin{equation} \begin{array}
  {ll}
&\sum_{1\leq i\leq s} \sum_{1\leq k\ne k'\leq m_i} e_i(k)\cdot e_i(k')+  \sum_{1\leq i\leq s} (m_ie_i) (e-m_ie_i)\\
\\
&=\sum_{1\leq i\leq s} m_i(m_i-1)e_i\cdot e_i +\sum_{1\leq i\leq s} (m_ie_i)\cdot (e-m_ie_i) \\
\\
&=\sum_{1\le i\le s} m_ie_i(e-e_i).

\end{array}
\end{equation} 

Sum them up, and notice that $m_i=1$ if $i\ge s$, we have 
$$\begin{array}{ll}
adj(e) &\ge \sum_{i=1}^l m_i\cdot adj(e_i)\\
\\
&+ \sum_{1\le i\le s} m_ie_i(e-e_i)\\
\\
&+2(l-s)\\
\\
&=\sum_{i=1}^l m_i\cdot (adj(e_i)+2) \\
\\
&+\sum_{1\le i\le s} m_ie_i(e-e_i)-\sum_{1\le i\le s}2m_i.
\end{array}
$$
 So it suffices to show that 
$$\sum_{1\le i\le s} m_ie_i(e-e_i)-\sum_{1\le i\le s}2m_i\ge 0.$$

We separate the discussion into the two cases $i\le s_1$ and $s_1+1\le i\le s$.

{\bf Case I:} When $i\le s_1$, since the curve configuration  is nef,

$$\sum_{1\le i\le s_1} m_ie_i(e-e_i)-\sum_{1\le i\le s_1}2m_i\ge \sum_{1\le i\le s_1} -m_ie_i\cdot e_i-\sum_{1\le i\le s_1}2m_i\ge0.$$
 The last inequality holds because $m_i\ge 2$ and $C_i^2\le -2$  if $1\le i\le s_1$.

{\bf Case II:}

For $s_1+1\le i\le s$, we need to be more careful.

$$e\cdot e_i=(\sum_{j\le s_1} m_je_j+\sum_{s+1\le j \le s+s_3}e_j+m_ie_i)\cdot e_i=\sum_{\{j\neq i:e_j\cdot e_i\neq 0\}}m_je_j\cdot e_i-m_i.$$ 

The equalities hold by the second bullet of Lemma \ref{combination}. 

Furthermore, by the third bullet of Lemma \ref{combination}, 
$$\sum_{\{j\neq i:e_j\cdot e_i\neq 0\}}m_je_j\cdot e_i-m_i\ge 1$$ for any $s_1+1\le i\le s$. Now, for $s_1+1\le i \le s$, $$m_ie_i(e-e_i)=m_ie_i\cdot e-m_ie_i^2\ge m_i+m_i=2m_i.$$

Combining these two cases, we have  proved the Lemma. 
\end{proof}

\section{Rational curves}

When $g_J(e)=0$, we get more precise information. 

\subsection{Tree of smooth components}

\begin{cor}\label{smooth component}
Suppose $e$ is a $J-$nef class with $g_J(e)=0$.
If $\Theta=\{(C_i,m_i), 1\leq i\leq l\}\in \mathcal M_{red, e}$ is connected, then
each irreducible component is a smooth rational curve.\end{cor}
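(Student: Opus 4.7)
The plan is to deduce this as an almost immediate consequence of Theorem \ref{genus bound} combined with Lemma \ref{0}, since the hypotheses ($J$-nef class and $g_J(e)=0$) are precisely the equality case of the genus bound applied to a nonnegative-term sum.

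First, I would apply Theorem \ref{genus bound} to the connected subvariety $\Theta$ in the $J$-nef class $e$ with $g_J(e)=0$. This yields
\[
0 \;=\; g_J(e) \;\ge\; t(\Theta) \;=\; \sum_{i=1}^{l} g_J(e_{C_i}).
\]
Next I would invoke the adjunction inequality for each irreducible component $C_i$: as recalled after the definition of $g_J$, the arithmetic genus $g_J(e_{C_i})$ is bounded below by the geometric genus of the model curve of $C_i$, hence in particular $g_J(e_{C_i}) \ge 0$.

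Combining these two observations, the sum $\sum_i g_J(e_{C_i})$ is a sum of nonnegative integers bounded above by $0$, so $g_J(e_{C_i}) = 0$ for every $i$. Finally, Lemma \ref{0} says that any irreducible subvariety with vanishing $J$-genus is a smooth rational curve, which gives the conclusion.

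There is no real obstacle: the content has been packaged into Theorem \ref{genus bound}, and the corollary is simply extracting the equality case via the nonnegativity of each $g_J(e_{C_i})$. Note that multiplicities $m_i$ play no role here, since $t(\Theta)$ is defined without weighting by multiplicity; the stronger statement that $\Theta$ is a tree configuration (part of Theorem \ref{emb-comp}) would require additional input, but for this corollary only the smoothness and rationality of each component is claimed.
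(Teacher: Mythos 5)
Your proof is correct and follows essentially the same route as the paper: apply Theorem \ref{genus bound} (inequality \eqref{bound}) to force $g_J(e_{C_i})=0$ from the nonnegativity of each term, then conclude smoothness and rationality via the adjunction inequality (your citation of Lemma \ref{0} is just the packaged form of that step). Nothing is missing.
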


\begin{proof}
Recall we have proved Theorem \ref{genus bound} in the last section, which says \eqref{bound} holds. 
Since $g_J(e)=0$ and $g_J(e_i)\geq 0$, it follows from \eqref{bound} that we must have $g_J(e_i)=0$ for each $i$. 
By the adjunction inequality \eqref{adj inequality},  each $C_i$ is a smooth rational curve.
\end{proof}

Now we show that $\Theta$ is a tree configuration. 

\begin{definition}\label{deftree}
A connected weighted graph  with $l$ vertices is called a tree if the sum of the labels of the edges is 
 $l-1$, which is the minimal number ensuring the graph to be connected.

A tree  graph is called a simple tree graph if further, each vertex has multiplicity $1$. 
\end{definition}

\begin{lemma} \label{tree1}
Suppose $g_J(e)=0$ and  $\Theta=\{(C_i, 1), 1\leq i\leq l\}$ is connected curve configuration with total class $e$, then $g_J(e_i)=0$ and the underlying graph is a simple tree.
\end{lemma}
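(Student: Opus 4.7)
The plan is to extract Lemma \ref{tree1} by running the inequality chain of Lemma \ref{simple} as an equality, since we are now in the boundary case $g_J(e)=0$. The starting identity is
\begin{equation*}
2g_J(e) = adj(e) + 2 = \sum_{i=1}^l adj(e_i) + \sum_{i\neq j} e_i\cdot e_j + 2,
\end{equation*}
which is purely formal from the definition of $adj$ expanded on $e=\sum e_i$.

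Next I would invoke two lower bounds. First, the adjunction inequality gives $adj(e_i)\ge -2$ for each $i$, so $\sum_i adj(e_i)\ge -2l$. Second, since $\Theta$ is connected and each pair $(C_i,C_j)$ contributes $e_i\cdot e_j\ge 0$ by positivity of intersections, the underlying graph must contain at least $l-1$ edges (each with label $\ge 1$), hence $\sum_{i<j} e_i\cdot e_j\ge l-1$, i.e.\ $\sum_{i\ne j}e_i\cdot e_j\ge 2(l-1)$. Combining these yields $2g_J(e)\ge -2l + 2(l-1) + 2 = 0$, which recovers Lemma \ref{simple-graph} in this case.

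Since the hypothesis $g_J(e)=0$ forces equality throughout, I can read off both conclusions simultaneously. Equality in $\sum_i adj(e_i)\ge -2l$ gives $adj(e_i)=-2$, i.e.\ $g_J(e_i)=0$ for every $i$. Equality in $\sum_{i<j} e_i\cdot e_j\ge l-1$ forces the graph to have exactly $l-1$ edges, each with label $1$; by Definition \ref{deftree}, this is a tree, and since each multiplicity is $1$ it is a simple tree. There is no real obstacle here --- the argument is a routine sharpness analysis --- so the only thing to be careful about is writing the edge-count correctly (each edge contributes twice to $\sum_{i\ne j}e_i\cdot e_j$, so one compares $2(l-1)$ with the symmetric sum, or $l-1$ with the sum over $i<j$).
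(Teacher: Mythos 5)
Your proposal is correct and follows essentially the same route as the paper: the paper's proof of Lemma \ref{tree1} simply observes that with $g_J(e)=0$ the inequality chain \eqref{simple estimate} from Lemma \ref{simple} must be an equality, which is exactly your sharpness analysis forcing $adj(e_i)=-2$ (so $g_J(e_i)=0$) and $\sum_{i\ne j}e_i\cdot e_j=2(l-1)$, i.e.\ a simple tree. Your handling of the factor of two in the symmetric sum is also consistent with the paper's count.
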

\begin{proof}
This follows from the argument in Lemma \ref{simple}. More precisely, since 
$g_J(e)=0$,  the estimate \eqref{simple estimate} has to be an equality. 
\end{proof}

\begin{lemma}\label{tree2}
Suppose we apply any of the three curve combination moves  to  a connected, nef curve configuration. 
If  each new vertex of the resulting curve configuration has   $adj=-2$, then so does each replaced vertex of the initial  curve configuration. 

\end{lemma}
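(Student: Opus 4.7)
The plan is to revisit the three bounds recorded in the proof of Lemma \ref{adj=-2} and analyze their equality cases. Across all three moves, the adjunction number of a new vertex is bounded below by the sum of the adjunction numbers of the relevant replaced vertices plus a non-negative correction built from intersection numbers (and, in move (iii), the genus of the $-1$ vertex $E$). Saturating every such bound at $-2$, together with the universal inequality $adj(\cdot)\ge -2$, will then force equality at every replaced vertex.

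For moves (i) and (ii), the relevant identity is
\[
adj(D_1+D_2) \;=\; adj(D_1) + adj(D_2) + 2\, D_1\cdot D_2,
\]
and adjoinedness of $V_1, V_2$ gives $D_1\cdot D_2\ge 1$. In move (i) the hypothesis $adj(D_1+D_2)=-2$ yields $adj(D_1)+adj(D_2)\le -4$; combined with $adj(D_1), adj(D_2)\ge -2$ this forces $adj(D_1)=adj(D_2)=-2$. In move (ii) the two new vertices carry classes $D_1+D_2$ and $D_1$, so $adj(D_1)=-2$ is immediate; substituting into the identity then gives $adj(D_2) = -2 - 2\, D_1\cdot D_2\le -2$, hence $adj(D_2)=-2$ (and incidentally $D_1\cdot D_2=1$).

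For move (iii), I would rewrite \eqref{move3adj} using $E\cdot E=-1$ and $K_J\cdot E = 2g_J(E)-1$ in the algebraically convenient form
\[
adj\bigl(D_i + (D_i\cdot E)E\bigr) \;=\; adj(D_i) + (D_i\cdot E)\bigl[(D_i\cdot E - 1) + 2 g_J(E)\bigr].
\]
Because $D_i$ neighbors $E$ we have $D_i\cdot E\ge 1$, and by the adjunction inequality applied to $E$ we have $g_J(E)\ge 0$, so the bracketed correction is non-negative. Setting the left-hand side equal to $-2$ thus forces $adj(D_i)=-2$ and the bracket to vanish, hence $D_i\cdot E=1$ and $g_J(E)=0$. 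The last equality is precisely $adj(E)=-2$. Running this for every $i$ establishes the conclusion for all replaced vertices, including $E$ itself.

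There is no serious obstacle; the argument is a careful read-off of the equality cases of estimates already in hand. The only mildly subtle point is in move (iii), where $E$ does not appear as one of the classes $D_i+(D_i\cdot E)E$, so $adj(E)=-2$ must be extracted from the forced vanishing $g_J(E)=0$ rather than from a direct equation involving $adj(E)$.
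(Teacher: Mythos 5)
Your proposal is correct and follows essentially the same route as the paper: it reads off the equality cases of the bounds \eqref{move12} and \eqref{move3adj}, using $adj\ge -2$, $D_i\cdot E\ge 1$ and $g_J(E)\ge 0$ to force $adj(D_1)=adj(D_2)=-2$, $D_1\cdot D_2=1$ for moves (i), (ii) and $adj(D_i)=-2$, $D_i\cdot E=1$, $g_J(E)=0$ (hence $adj(E)=-2$) for move (iii). The only cosmetic difference is that in move (ii) you invoke the new vertex carrying the class $D_1$ to get $adj(D_1)=-2$ directly, while the paper extracts everything from the saturation of \eqref{move12}; both are equivalent.
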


\begin{proof}
We use the notation $D_i$ as in \ref{notation}.

For the first two moves, since  
$adj(D_i)\geq -2$, 
 it follows from \eqref{move12} that 
$-2=adj(D_1+D_2)$ if and only if 
$$adj(D_1)=adj(D_2)=-2, \quad D_1\cdot D_2=1.$$

For the third move, 
since $adj(D_i)\geq -2$, $g_J(E)\geq 0$ and $D_i\cdot E>0$, 
it follows from \eqref{move3adj} that 
$-2=adj(D_i+(D_i\cdot E)E)$ 
if and only if 
$$adj(D_i)=-2,  \quad g_J(E)=0, \quad D_i\cdot E=1.$$
\end{proof}

\begin{lemma}\label{tree3}

Suppose $G=\{(e_i, m_i), 1\leq i\leq l\}$ is a connected, nef curve configuration  with  each vertex having $adj=-2$. 
Let $G'$ be  the curve configuration obtained from  $G$ by  a curve expansion or a curve combination. 
If $G'$ is a 
tree, so is $G$. 
\end{lemma}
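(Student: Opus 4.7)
I will do a case analysis over the four types of moves---curve expansion and curve combinations (i), (ii), (iii)---that can take $G$ to $G'$. For each move I compute the change in vertex count $l-l'$ and in the total edge-label sum
\[
L(G)\;=\;\sum_{i<j,\,e_i\cdot e_j>0}e_i\cdot e_j
\]
as an explicit function of the intersection numbers parametrizing the move. The tree hypothesis $L(G')=l'-1$, in conjunction with Lemma \ref{tree2}, should then force $L(G)=l-1$ with every individual edge-label of $G$ equal to $1$, giving the conclusion.

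The key preliminary step is to upgrade the tree hypothesis on $G'$ to the statement that every vertex of $G'$ again has $adj=-2$. Here one combines the non-decreasing bullets of Lemmas \ref{simple'} and \ref{adj=-2} (giving $\sum_j g_J(e_j')\ge \sum_i g_J(e_i)=0$), the bound $g_J(e)\ge\sum_j g_J(e_j')$ from Proposition \ref{emb-comp-graph}, and for the multiplicity-one case the identity $adj(e)=\sum_j adj(e_j')+2(l'-1)$, together pinching $adj(e_j')=-2$ for each $j$. Once this is in hand, Lemma \ref{tree2} yields the sharp values $D_1\cdot D_2=1$ in moves (i) and (ii), and $D_i\cdot E=1$ together with $g_J(E)=0$ in move (iii).

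With these constants pinned down, the remaining bookkeeping is short. For move (i), the identity $L(G)=L(G')+D_1\cdot D_2=(l'-1)+1=l-1$ settles that case, and the tree condition $(D_1+D_2)\cdot e_W=1$ at each neighbor $W$ of $V$ in $G'$ forces each individual edge-label at $V_1$ or $V_2$ in $G$ to be $0$ or $1$ by positivity of intersection. Move (ii) splits into two subcases according to whether the edge $VV'$ is present in $G'$; each is handled by a brief label-count (yielding $D_1^2=0$ in one subcase and $D_1^2=-1$ in the other). For move (iii), one obtains $L(G')-L(G)=\binom{t}{2}-t$ when all $D_i\cdot E=1$; the connectedness of $G$ rules out $t\ge 3$, leaving $t\in\{1,2\}$ with $L(G)=l-1$. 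The curve-expansion case is handled by the observation that $e_C^2\ge 0$ together with $adj(e_C)=-2$ and the tree hypothesis on $G'$ forces $e_C^2=0$, after which the count closes immediately. The principal obstacle I anticipate is the preliminary reduction to $adj(e_j')=-2$ at every vertex of $G'$; without it, the numerical identities above control only $|E(G)|$ rather than the individual labels, which is not enough to recover the paper's stronger notion of tree (sum of labels equal to $l-1$, not merely $|E(G)|=l-1$).
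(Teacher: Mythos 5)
Your per-move bookkeeping is, in substance, the paper's own proof: the paper reduces the lemma to checking that, for each move, the change of the total label sum is no smaller than the change of the number of vertices (which, together with the fact that any connected configuration has label sum at least the vertex count minus one, pins down $L(G)=l-1$), and its computations for expansion and for moves (i), (ii), (iii) are the ones you sketch. Two small points there: proving that every individual label of $G$ equals $1$ is not needed beyond $L(G)=l-1$, and in move (ii), in the subcase $D_1\cdot D_1=-1$, the paper must (and does) invoke nefness of the total class to see that $V_1$ is adjoined to some vertex other than $V_2$; your ``brief label-count'' should make that input explicit.

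The genuine gap is exactly the preliminary step you flag, and your proposed derivation of it cannot be repaired within the stated hypotheses. Proposition \ref{emb-comp-graph} gives $g_J(e)\ge\sum_j g_J(e_j')$ and monotonicity gives $\sum_j g_J(e_j')\ge\sum_i g_J(e_i)=0$, but nothing in the hypotheses of Lemma \ref{tree3} bounds $g_J(e)$ from above, so there is nothing to pinch against; likewise the identity $adj(e)=\sum_j adj(e_j')+2(l'-1)$ (valid only when $G'$ is a multiplicity-one tree) merely restates $g_J(e)=\sum_j g_J(e_j')$ and gives no control on the individual $g_J(e_j')$ unless $g_J(e)=0$ is already known. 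Indeed the desired conclusion is not a consequence of the stated hypotheses: in $\mathbb C\mathbb P^2\#5\overline{\mathbb C\mathbb P^2}$ take $G$ with two vertices weighted by $(2H-E_1-\cdots-E_5,1)$ and $(H,1)$; both classes are $J$-effective with adjunction number $-2$, the single edge has label $2$, and the total class $e=3H-E_1-\cdots-E_5$ pairs non-negatively with both vertices, so $G$ satisfies all hypotheses, yet move (i)$_1$ produces the one-vertex configuration $\{(3H-E_1-\cdots-E_5,1)\}$, a tree in the sense of Definition \ref{deftree} whose vertex has adjunction number $0$, while $G$ itself is not a tree. So the fact that the new vertices of $G'$ again have $adj=-2$ must be imported from the context in which the lemma is applied: in Proposition \ref{graph tree} and Lemma \ref{minimal graph} the total class has $g_J(e)=0$, and then Proposition \ref{emb-comp-graph} (or the backwards induction combining Lemma \ref{tree2} with Lemma \ref{tree3}) forces every vertex of every intermediate configuration to have genus $0$. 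The paper's proof does not attempt to derive this internally; it simply uses it when it cites ``the $adj=-2$ assumption and Lemma \ref{tree2}'' to get $D_1\cdot D_2=1$ and $D_i\cdot E=1$. Your attempt to prove it from the lemma's hypotheses alone is the step that fails; the fix is to add the corresponding hypothesis (or the hypothesis $g_J(e)=0$) rather than to prove it.
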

\begin{proof}
 We only need to verify the change of the sum of  labels is no smaller than the change of number of vertices.
Let $e$ be the total class of $G$.

For a curve expansion, a vertex weighted by $(D, m)$ becomes $m$ vertices weighted by $(D, 1)$. 
The number of vertices  increases by $m-1$, and since $G$ is connected, 
the sum of the labels increases by $(m-1)D\cdot (e-D)\ge m-1$.

For curve combination move (i), the number of vertices  decreases by $1$. 
Suppose the  two replaced vertices are   $C_1$ and $C_2$. The sum of labels  decreases
by 
$$(e_{C_1}\cdot  \sum_{i\ge 3}e_{C_i}+e_{C_2}\cdot  \sum_{i\ge 3}e_{C_i}+e_{C_1}\cdot e_{C_2})-((e_{C_1}+e_{C_2})\cdot  \sum_{i\ge 3}e_{C_i})= e_{C_1}\cdot e_{C_2}= 1.$$
The last step is due to the  $adj=-2$ assumption  and Lemma \ref{tree2}.

For move (ii), the number of vertices is unchanged. Let the two replaced vertices be $C_1$ weighted by $(D_1, n_1)$, and $C_2$ weighted by $(D_2, n_2)$. 
Due to the  $adj=-2$ assumption and Lemma \ref{tree2}, $D_1\cdot D_2=1$. 
Hence $D_1\cdot D_1=-1$ because $D_1\cdot D_2\ge -D_1\cdot D_1>0$. 
By nefness, $e\cdot D_1\ge 0$. So $C_1$ should connect to vertices other than $C_2$. And the sum of labels would increase by $$2(n_1-n_2)-((n_1-n_2)+1)\ge 0.$$

For move (iii)$_t$, the number of vertices would decrease by $1$. The number of labels would increase at least by  
\begin{equation}\label{iii12}
\sum_{1\le i<j\le t}(D_i\cdot D_j+1)- (t+\sum_{1\le i<j\le t}D_i\cdot D_j)\ge \frac{t(t-3)}{2}\ge -1.\end{equation}
\end{proof}

\begin{prop}\label{graph tree}
Suppose $e$ is a $J-$nef class with $g_J(e)=0$.
If  $G$ is connected  curve configuration with class $e$ and at least 2 vertices, then 
  $G$ is a tree graph.
\end{prop}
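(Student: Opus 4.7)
The plan is to rearrange $G$ into a simpler configuration using the machinery built up in Section~3, verify that this simpler configuration is automatically a simple tree, and then transport the tree property backward along the rearrangement moves via Lemmas~\ref{tree2} and~\ref{tree3}. As a preliminary observation, since $e$ is $J$-nef with $g_J(e)=0$ and $G$ is connected, Corollary~\ref{smooth component} (equivalently, the equality case of Theorem~\ref{genus bound}) forces $g_J(e_i)=0$, and hence $adj(e_i)=-2$, for every vertex of $G$.

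Next, apply the rearrangement procedure of Lemma~\ref{combination} to $G$ to produce a finite sequence of curve configurations
$$ G = G_0 \longrightarrow G_1 \longrightarrow \cdots \longrightarrow G_N = G^*, $$
where each arrow is either a curve expansion or one of the curve combination moves (i), (ii), (iii), and $G^*$ satisfies the three bullets of Lemma~\ref{combination}. Each $G_k$ is connected, nef, and has total class $e$. Because $g_J(e)=0$, Lemma~\ref{multi 1} rules out any vertex of $G^*$ having multiplicity greater than $1$; hence every vertex of $G^*$ has multiplicity $1$, and by Lemma~\ref{tree1}, $G^*$ is a simple tree with each vertex having $adj=-2$.

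We now run a backward induction on $k$, from $N$ down to $0$, with the claim that $G_k$ is a tree and every vertex of $G_k$ has $adj=-2$. The base case $k=N$ was handled above. For the inductive step, assume the claim for $G_{k+1}$. The vertices of $G_k$ that are not touched by the move $G_k \to G_{k+1}$ reappear in $G_{k+1}$, and so have $adj=-2$ by the inductive hypothesis; for the touched vertices, if the move is a curve expansion the class (and hence $adj$) is preserved, while if the move is one of (i), (ii), (iii), then Lemma~\ref{tree2} applied to the new vertices of $G_{k+1}$ (all of which have $adj=-2$) forces the replaced vertices of $G_k$ to have $adj=-2$ as well. Hence $G_k$ has $adj=-2$ at every vertex, and Lemma~\ref{tree3} upgrades the tree property of $G_{k+1}$ to $G_k$. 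Taking $k=0$ yields that $G$ is a tree. The main obstacle in this argument is essentially absorbed into Lemmas~\ref{tree2} and~\ref{tree3}: keeping track of the tree property through move (iii) is tight because of the inequality~\eqref{iii12}, which is precisely where the hypothesis $g_J(e)=0$, funneled through the uniform $adj=-2$ condition, is genuinely used.
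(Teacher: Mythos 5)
Your proposal is correct and follows essentially the same route as the paper: rearrange via Lemma~\ref{combination}, use Lemma~\ref{multi 1} to force all multiplicities of the final configuration to be $1$ so that Lemma~\ref{tree1} makes it a simple tree, then pull the tree property back through the moves with Lemmas~\ref{tree2} and~\ref{tree3}. Your explicit backward induction (maintaining $adj=-2$ at every vertex of each intermediate graph so that Lemma~\ref{tree3} applies) just spells out what the paper's one-line appeal to Lemma~\ref{tree3} leaves implicit.
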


\begin{proof}
If $m_i=1$ for each $i$, the assertion follows from Lemma \ref{tree1}. 

Otherwise, we apply the curve expansion and combination moves to get a connected, nef curve configuration $G'$ with class $e$ and satisfying all the three bullets in Lemma \ref{combination}.
Notice that since $g_J(e)=0$, Lemma \ref{multi 1} implies that each vertex of $G'$ has  multiplicity  one. Therefore $G'$ is a tree.

Then by Lemma \ref{tree3}, $G$ is a tree  as well. 
\end{proof}

\begin{cor}\label{tree}
Suppose $e$ is a $J-$nef class with $g_J(e)=0$.
If $\Theta=\{(C_i,m_i), 1\leq i\leq l\}\in \mathcal M_{red, e}$ is connected, then
the underlying weighted  graph is a tree.
\end{cor}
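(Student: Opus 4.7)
The plan is to deduce Corollary~\ref{tree} as a nearly immediate consequence of Proposition~\ref{graph tree}, after a case split on the number $l$ of distinct irreducible components of $\Theta$. Since $\Theta$ is connected and represents $e$, its associated weighted graph is a connected curve configuration; and since $e$ is $J$-nef while each class $e_i=e_{C_i}$ is $J$-effective, we automatically have $e\cdot e_i\ge 0$, so the graph satisfies the nefness condition of Definition~\ref{nef graph}.

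In the case $l\ge 2$, I would simply invoke Proposition~\ref{graph tree} verbatim: the hypotheses ($g_J(e)=0$, $e$ is $J$-nef, the graph is connected with at least two vertices) are exactly those of the proposition, so the underlying graph is a tree and we are done.

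In the case $l=1$, the reducibility assumption $\Theta\in\mathcal M_{red,e}$ forces $\Theta=\{(C_1,m_1)\}$ with $m_1\ge 2$. The corresponding weighted graph has a single vertex and no edges (edges are defined only between distinct vertices), so $l=1$ and the sum of the edge labels is $0=l-1$; by Definition~\ref{deftree} the graph is trivially a tree. (Note that Lemma~\ref{simple-graph} further restricts this subcase by ruling out $e_1\cdot e_1=0$, but this constraint is not needed for the tree conclusion.)

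The main obstacle in proving that the graph is a tree has already been absorbed into Proposition~\ref{graph tree}, whose proof rested on the rearrangement moves, Lemma~\ref{multi 1}, and Lemma~\ref{tree3}. Consequently there is no substantive new difficulty here: the only step requiring any thought is recognizing the two-case split dictated by the definition $\mathcal M_{red,e}=\mathcal M_e\setminus\mathcal M_{irr,e}$ and checking that the degenerate single-vertex configuration is vacuously a tree under the paper's convention.
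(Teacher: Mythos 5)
Your proposal is correct and follows essentially the same route as the paper, which states Corollary~\ref{tree} as an immediate consequence of Proposition~\ref{graph tree} applied to the connected, nef curve configuration underlying $\Theta$. Your explicit treatment of the single-vertex case $\Theta=\{(C_1,m_1)\}$ with $m_1\ge 2$ (vacuously a tree under Definition~\ref{deftree}) is a small but legitimate addition that the paper leaves implicit, since Proposition~\ref{graph tree} as stated requires at least two vertices.
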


\subsection{Dimension bound}
Suppose $e$ is a $J-$nef class with $g_J(e)=0$.
If $\Theta=\{(C_i,m_i), 1\leq i\leq n\}\in \mathcal M_{red, e}$ is connected,
by Corollary \ref{smooth component} and Proposition \ref{graph tree}, the underlying curve configuration of $\Theta$
is a tree with each vertex having genus $0$.

\subsubsection{$l_G$}
In light of this, we introduce the following definition.

\begin{definition} \label{lG}
The dimension of a tree graph $G$ with  vertices weighted by $\{(e_i, m_i)\}$ and having genus 0 is 
defined to be 
$$l_G=\sum_{i=1}^n l_{e_i}.$$
\end{definition}

Recall that $l_{e_i}=\max\{\iota_{e_i}, 0\}$, and 
$\iota(e_i)$ is the $J-$dimension defined by \eqref{l}.  Since $g_J(e_i)=0$, $\iota_{e_i}$ is equal to  $e_i\cdot e_i+1$ by Lemma \ref{irr=smooth}.

We stratify $\mathcal M_{red, e}$ according to the underlying curve configuration. 
By Lemma  \ref{pdmanifold},  $l_G$ is the complex dimension of the stratum corresponding to the curve configuration $G$. 

Let $L=l_e$. By Lemma  \ref{pdmanifold}, $L$ is the  complex dimension of $\mathcal M_{irr, e}$, as long as
$\mathcal M_{irr, e}$ is nonempty. 

\begin{example} \label{centered graph}  We illustrate Definition \ref{lG} with a  graph having $5$ vertices and dimension $L-1$. We will use notations as in Example \ref{referee}. For the centered graph $G$ with center vertex $(a-2b, 1)$ and four vertices $(b, 1)$, its dimension $l_G=4$. In this case, $e=a+2b$ and $L=l_e=5$. 
So $G$ is a codimension $1$ graph.
\end{example}

For a $J-$nef class, we have the following fact. 

\begin{lemma}\label{reducible-dim}
Suppose $e$ is a $J-$nef class with $g_J(e)=0$ and $L=l_e$.
If $G$ is a connected curve configuration with class $e$ and $n\geq 2$ vertices, and vertices weighted by 
$\{(e_i, m_i), 1\leq i\leq n\}$,  then 
\begin{equation}\label{red-dim'}
\sum_{i=1}^n m_i l_{e_i}\leq L-1.
\end{equation}
\end{lemma}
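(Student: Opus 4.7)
I would prove the lemma by reducing to the simple-tree case via the rearrangement of Lemma \ref{combination}, verifying the bound there directly, and checking monotonicity of $\sum m_i l_{e_i}$ under each move. The starting observations are that $e\cdot e\geq 0$ (by Lemma \ref{irr=nef}, since $e$ is $J$-nef and effective), so $L-1=e\cdot e$, and that $g_J(e_i)=0$ for each $i$ (by Corollary \ref{smooth component}), so $l_{e_i}=\max(e_i\cdot e_i+1,0)$.

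First, I would apply Lemma \ref{combination} to $G$ to obtain a rearranged configuration $G^*$. By Lemma \ref{adj=-2} together with Proposition \ref{emb-comp-graph} and the hypothesis $g_J(e)=0$, every intermediate vertex class retains $g_J=0$ throughout the rearrangement. Lemma \ref{multi 1} then forces every vertex of $G^*$ to have multiplicity one, and Lemma \ref{tree1} identifies $G^*$ as a simple tree in which every edge has label one.

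Second, I split the vertices of $G^*$ into \emph{good} ($e_i^{*2}\geq -1$, so $l_{e_i^*}=e_i^{*2}+1$) and \emph{bad} ($e_i^{*2}\leq -2$, so $l_{e_i^*}=0$), writing $k_i=-e_i^{*2}\geq 2$ on the bad ones. Using $e\cdot e=\sum_i e_i^{*2}+2(n^*-1)$, a direct computation reduces the desired inequality to $\sum_{\mathrm{bad}}(k_i-2)\leq n_g^*-2$. Nefness at a bad vertex (with multiplicity one) yields $d_v\geq k_v$, and since each good vertex contributes $d\geq 1$ to the total $\sum_v d_v=2(n^*-1)$, one concludes $\sum_{\mathrm{bad}}k_i\leq n_g^*+2n_b^*-2$, which is equivalent to the required bound.

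The hard part will be verifying monotonicity of $\sum m_i l_{e_i}$ under the rearrangement. Curve expansion on $(D,m)$ with $D^2\geq 0$ preserves the sum exactly (both values equal $m(D^2+1)$), and move (iii) at a $-1$ vertex $E$ with neighbors of self-intersection $\leq -2$ cannot decrease the sum because the replaced vertices all contribute zero while each new class $D_i+(D_i\cdot E)E$ has $l\geq 0$. For moves (i) and (ii) applied to a $-1$ vertex $D_1$ adjoined to $D_2$, the condition $g_J(D_1+D_2)=0$ forces $D_1\cdot D_2=1$ and $(D_1+D_2)^2=D_2^2+1$; a two-case split on whether $D_2^2\geq -1$ or $D_2^2\leq -2$ then gives $l_{D_1+D_2}\geq l_{D_1}+l_{D_2}$. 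Combining the three steps yields $\sum_G m_i l_{e_i}\leq\sum_{G^*}l_{e_i^*}\leq L-1$. Critically, every curve-combination move in the rearrangement involves a $-1$ vertex, which forces $l_{D_1}=0$ and is what makes the move-by-move comparison close.
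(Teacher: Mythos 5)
There is a genuine gap at the end of your chain of inequalities. The rearrangement of Lemma \ref{combination} can terminate in a configuration $G^*$ consisting of a \emph{single} vertex of multiplicity one and class $e$: for instance $G=\{(H-E_1,1),(E_1,1)\}$ with $e=H$ in $\mathbb{C}\mathbb{P}^2\#\overline{\mathbb{C}\mathbb{P}^2}$ is a connected, nef curve configuration with $g_J(e)=0$ and two vertices, the $-1$ vertex $E_1$ is adjoined to the vertex $H-E_1$ of square $0$ with equal multiplicity, so the only applicable move is (i)$_1$, which collapses $G$ to $\{(H,1)\}$. For such a terminal graph one has $\sum l_{e_i^*}=l_e=L$, not $L-1$; your tree-counting argument does not apply there (the reduction to $\sum_{\mathrm{bad}}(k_i-2)\leq n_g^*-2$ is false when $n_g^*=1$ and there are no bad vertices), and since all of your move-by-move estimates go in the direction of \emph{increasing} $\sum m_i l_{e_i}$, the chain only yields $\sum_G m_i l_{e_i}\leq L$, one short of the claim. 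To close this you would have to treat the one-vertex terminal case separately, e.g.\ stop at the penultimate configuration $G_1$ and bound it directly: for the collapsing move $(u_1,1),(u_2,1)\mapsto (e,1)$ with $u_2\cdot u_2=-1$ and $u_1\cdot u_2=1$ one checks $l_{u_1}+l_{u_2}\leq e\cdot e=L-1$, and then monotonicity up to $G_1$ suffices. This is essentially the bookkeeping the paper carries out later (Lemmas \ref{dimnd} and \ref{minimal graph}), where it is acknowledged that the rearranged graph $\tilde G$ can have $l_{\tilde G}=L$.

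Beyond this gap, your route is much heavier than the paper's. The paper proves Lemma \ref{reducible-dim} by a direct computation with no moves at all: writing $L-1=e\cdot e=\sum_i m_ie_i\cdot e$, connectivity gives $m_ie_i\cdot(e-m_ie_i)\geq m_i$ and $m_i^2\,e_i\cdot e_i\geq m_i\,e_i\cdot e_i$ for the vertices with $e_i\cdot e_i\geq 0$, so those contribute at least $m_il_{e_i}$ to $e\cdot e$, while the vertices of negative self-intersection have $l_{e_i}=0$ and their total contribution $\sum m_ie_i\cdot e$ is discarded using only the $J$-nefness of $e$. Your monotonicity analysis of $\sum m_il_{e_i}$ under the moves is correct as far as it goes and is close in spirit to Lemma \ref{dimnd}, but it is not needed for this statement, and as written it does not by itself deliver the bound $L-1$.
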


Lemma \ref{reducible-dim} is to be proved below. 
Before proving it, we first state an important  corollary.
Since $l_{e_i}\geq 0$ we have 

\begin{cor}\label{reducible-dim'}
Suppose $e$ is a $J-$nef class with $g_J(e)=0$ and $L=l_e$.
If $G$ is a connected curve configuration with class $e$ and at least 2 vertices, then the dimension $l_G$  of 
the stratum indexed by $G$ satisfies the following bound, 
\begin{equation}\label{red-dim}
l_G\leq L-1.
\end{equation} 

\end{cor}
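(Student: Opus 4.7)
The plan is to observe that the corollary reduces in one step to the preceding Lemma \ref{reducible-dim}: since each $m_i\geq 1$ and each $l_{e_i}\geq 0$, one has $l_G=\sum_i l_{e_i}\leq \sum_i m_i l_{e_i}\leq L-1$. So the real content is the lemma itself, and I would prove it directly from nefness and the tree structure rather than by running further combinatorial moves.

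First I would gather the structural data furnished by the preceding results. By Corollary \ref{smooth component} each component class satisfies $g_J(e_i)=0$, so Lemma \ref{irr=smooth} gives $l_{e_i}=\max\{e_i\cdot e_i+1,\,0\}$. By Proposition \ref{graph tree} the underlying graph of $G$ is a tree in the sense of Definition \ref{deftree}, meaning that the sum of its edge labels is $n-1$; since connectedness requires at least $n-1$ edges and every label is a positive integer, this forces exactly $n-1$ edges each of label $1$, so $e_i\cdot e_j\in\{0,1\}$ for $i\ne j$. Finally, Lemma \ref{irr=nef} gives $e\cdot e\geq 0$, which together with $g_J(e)=0$ yields $L-1=e\cdot e$.

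The key step will be the pointwise inequality $l_{e_i}\leq e\cdot e_i$ for every $i$. When $e_i\cdot e_i\leq -1$ this reduces to nefness of $e$, since $l_{e_i}=0$. When $e_i\cdot e_i\geq 0$, the unit-label tree structure gives
\[
e\cdot e_i=m_i e_i\cdot e_i+\sum_{j\sim i} m_j,
\]
and after subtracting $l_{e_i}=e_i\cdot e_i+1$ the claim reduces to $(m_i-1)(e_i\cdot e_i)+\sum_{j\sim i}m_j\geq 1$; the first summand is nonnegative and the second is at least $1$ because every vertex of a connected graph on $n\geq 2$ vertices has a neighbor. Multiplying by $m_i$, summing over $i$ and using bilinearity then gives $\sum_i m_i l_{e_i}\leq \sum_i m_i(e\cdot e_i)=e\cdot\sum_i m_i e_i=e\cdot e=L-1$, which proves the lemma. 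The one place that needs a little care is the $e_i\cdot e_i\geq 0$ case of the pointwise estimate, where both the unit-label property and the existence of a neighbor must be invoked; the rest, including the reduction from the lemma to the corollary, is essentially formal.
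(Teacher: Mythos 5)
Your proof is correct and is essentially the paper's own argument: the paper proves Lemma \ref{reducible-dim} by expanding $e\cdot e=\sum_i m_i\,e\cdot e_i$ and bounding each term from below by $m_i l_{e_i}$ --- nefness for vertices of negative self-intersection, connectivity for the rest --- which is exactly your pointwise estimate $l_{e_i}\le e\cdot e_i$ summed against the multiplicities, followed by the same formal reduction $l_G\le\sum_i m_il_{e_i}$. The only differences are cosmetic: the paper never needs the unit-label tree structure from Proposition \ref{graph tree} (positivity of intersections plus connectedness already give $e_i\cdot(e-m_ie_i)\ge 1$), and its proof of the lemma also treats the single-vertex, multiplicity $\ge 2$ case, which your argument omits but which lies outside the corollary's ``at least 2 vertices'' hypothesis as literally stated.
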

This  is an analogue of Proposition 3.4 in \cite{T1},
but valid for an arbitrary almost complex structure.

\subsubsection{Lemma \ref{reducible-dim}}
\begin{proof} [Proof of Lemma \ref{reducible-dim}] Notice that  by the assumption,
 $e$ is  $J-$effective and $J-$nef, so  $e\cdot e\geq 0$ by Lemma \ref{irr=nef}. Hence 
\begin{equation}
L=l_e=\iota_e=e\cdot e+1\geq 1.
\end{equation}

Let us first assume that $n=1$. In this case, since $G$ has least 2 vertices,  $m_1\geq 2$.   
By the adjunction formula, this is impossible if $e_1\cdot e_1 =0$. 
But if $e_1\cdot e_1>0$, then  $e\cdot e\geq m_1^2$, and $l_{e_1}=1+e_1\cdot e_1$. Therefore 
$$m_1l_{e_1}=m_1+m_1e_1\cdot e_1 =m_1+ \frac{1}{m_1}  e\cdot e <1+ e\cdot e=L.$$

Now we assume that $n\geq 2$. Then
\begin{equation}\label{l-e}
L=\iota_e=\sum_{i=1}^n m_ie_i\cdot m_ie_i + \sum_{i=1}^n m_i e_i\cdot(e- m_i e_i) +1.
\end{equation}
Since $G$ is connected and $n\geq 2$,  
\begin{equation} \label {m-i} m_i e_i\cdot(e- m_i e_i)\geq m_i
\end{equation}  for each $i$.

I. Let us start with the simple case where each $e_i$ has $e_i\cdot e_i\geq 0$.
 Then $l_{e_i}=\iota_{e_i}=1+e_i\cdot e_i$ for each $i$,  and $m_i^2e_i\cdot e_i\geq  m_ie_i\cdot e_i$. 
 By \eqref{l-e}  
and \eqref{m-i},
 $$L\geq 1+\sum_{i=1}^n m_il_{e_i}.$$

II. General case.   Use $1, \cdots, k$ to label the vertices  whose class has self-intersection at most $-1$. 
Notice that  $l_{e_i}=0$ for $i=1, \cdots, k$.

Since $G$ is connected, $e_j\cdot (e-m_je_j)\geq 1$ for each $j\geq k+1$.
Therefore $L$ can be estimated as follows:
\begin{equation} \label{3terms}
\begin{array}{lll}
 L&=&1+e\cdot e\\
&&\\
 &=&1+\sum_{j=k+1}^n(m_j^2e_j\cdot e_j+ m_je_j\cdot (e-m_je_j))  + (\sum_{i=1}^k m_ie_i\cdot e)\\
&&\\
 &\geq&1+\sum_{j=k+1}^n m_jl_{e_j}+ (\sum_{i=1}^k m_ie_i\cdot e)\\
&& \\
 &=& 1+\sum_{j=1}^n m_jl_{e_j} + (\sum_{i=1}^k m_ie_i\cdot e).\\
 \end{array}
 \end{equation}
Finally, observe that, by the $J-$nefness of $e$,
the last   term $(\sum_{i=1}^k m_ie_i\cdot e)$  is non-negative.
\end{proof}

\subsection{Maximal dimension configurations} 
We assume $M$ has $b^+=1$. 
Let  $e$ continue to be a $J-$nef class with $g_J(e)=0$. 

If $G$ is a connected curve configuration with class $e$ and at least 2 vertices, we have  shown 
in the two previous subsections that  
$G$ is  a tree graph (Proposition \ref{graph tree}),  and $l_G$  is bounded above by 
$L-1$ (Corollary \ref{reducible-dim'}).

In this subsection we classify all possible maximal dimension
configurations with at least 2 vertices, i.e. configuration $G$ with 
$L= 1+l_G$. 

Let $G_-$ be the weighted subgraph containing each vertex whose class has self-intersection at most $-1$. Use $V_1, \cdots, V_k$ to label these vertices. 
Let $G_+$ be the weighted subgraph containing remaining vertices, use $V_j$ with $j\geq k+1$ to label these vertices. 

\begin{lemma}\label{G+}
If $l_G=L-1$ then  $m_j=1$ for $j\geq k+1$. Namely, the $G_+$ part is simple. 
\end{lemma}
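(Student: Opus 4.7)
The proof should be a quick distillation of the estimates already established in Lemma \ref{reducible-dim} and Corollary \ref{reducible-dim'}. The plan is to observe that the hypothesis $l_G = L-1$ forces the two chained inequalities underlying Corollary \ref{reducible-dim'} to both be equalities, and then read off what equality in $l_G = \sum_i l_{e_i} \le \sum_i m_i l_{e_i}$ means vertex by vertex.

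Concretely, first I would recall that $l_G = \sum_{i=1}^n l_{e_i}$ while Lemma \ref{reducible-dim} gives $\sum_{i=1}^n m_i l_{e_i} \le L - 1$. Since each $m_i \ge 1$ and each $l_{e_i} \ge 0$, one has $l_G \le \sum_i m_i l_{e_i} \le L-1$, which is exactly the chain used to deduce Corollary \ref{reducible-dim'}. Under the assumption $l_G = L-1$, both inequalities collapse to equalities, and in particular
\[
\sum_{i=1}^n l_{e_i} = \sum_{i=1}^n m_i l_{e_i}.
\]
Termwise, $(m_i - 1) l_{e_i} = 0$ for every $i$, so for each vertex either $m_i = 1$ or $l_{e_i} = 0$.

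Next I would identify precisely when $l_{e_i} = 0$. By hypothesis each $g_J(e_i) = 0$, so Lemma \ref{irr=smooth} yields $\iota_{e_i} = e_i \cdot e_i + 1$, and therefore $l_{e_i} = \max\{e_i \cdot e_i + 1,\, 0\}$. This vanishes exactly when $e_i \cdot e_i \le -1$, i.e.\ when $V_i$ belongs to $G_-$. For any vertex $V_j$ with $j \ge k+1$, by definition $e_j \cdot e_j \ge 0$ and so $l_{e_j} \ge 1 > 0$, which together with the termwise conclusion above forces $m_j = 1$. This is exactly the assertion that $G_+$ is simple.

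No step here looks hard; the only thing to verify carefully is that the inequality $\sum_i l_{e_i} \le \sum_i m_i l_{e_i}$ is strict in each coordinate where $m_i \ge 2$ and $l_{e_i} > 0$, which is immediate. The argument uses nothing beyond the already-established Lemma \ref{reducible-dim} and the explicit formula for $l_{e_i}$, and does not even use the $b^+ = 1$ hypothesis imposed at the start of the subsection.
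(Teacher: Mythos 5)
Your proof is correct and takes essentially the same route as the paper: the paper deduces the lemma directly from the inequality \eqref{3terms} together with $J$-nefness, which is exactly the content of Lemma \ref{reducible-dim} that you invoke, and in both cases the hypothesis $l_G=L-1$ forces equality in $l_G=\sum_i l_{e_i}\le \sum_i m_i l_{e_i}\le L-1$, hence $(m_i-1)l_{e_i}=0$ for every $i$ and $m_j=1$ whenever $l_{e_j}\ge 1$, i.e.\ for $j\ge k+1$. Your side remark that $b^+=1$ is not needed here is also consistent with the paper's one-line proof.
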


\begin{proof}
Since $e$ is $J-$nef and  $l_{e_j}=0$ for $j\leq k$,    it follows from  (\ref{3terms}) that $L-1\geq \sum_{j=k+1}^n m_j l_{e_j}$.
Since $l_{e_j}\geq 1$ for each $j\geq k+1$ and $l_G=\sum_{j=k+1}^n l_{e_j}$, we have the desired claim.
\end{proof}

From the proof above we  actually have  two more consequences of \eqref{3terms} under the assumption that  $l_G=L-1$. 
The first one is 
\begin{equation} \label{positiveint}
e_i\cdot (e-e_i)=1, \quad \hbox{if $k+1\leq i\leq n$},
\end{equation}
and the second one is 
\begin{equation} \label{negativeint}
e\cdot e_i=0, \quad \hbox{if  $i\leq k$.}
\end{equation}

\subsubsection{When $G_-$ is empty}

\begin{lemma}\label{caseI}
If $l_G=L-1$ and  $G_-$  is empty, then $n=2$, and $m_1=m_2=1$.
\end{lemma}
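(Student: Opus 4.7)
The plan is to combine the structural results already established (tree structure, simple $G_+$, and the explicit formula for $l_{e_i}$ when $e_i\cdot e_i\ge 0$) with a direct computation of $L-1-l_G$.

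First I would record that since $G_-$ is empty, every vertex has $e_i\cdot e_i\ge 0$. Lemma \ref{G+} then gives $m_i=1$ for every $i$, so the statement reduces to showing that $n=2$. I can then use Lemma \ref{irr=smooth} to write $l_{e_i}=\iota_{e_i}=e_i\cdot e_i+1$ for each $i$, whence
\[
l_G \;=\; \sum_{i=1}^n l_{e_i} \;=\; n+\sum_{i=1}^n e_i\cdot e_i.
\]
On the other hand, $e$ is $J$-nef and $J$-effective, so by Lemma \ref{irr=nef} we have $e\cdot e\ge 0$, which together with $g_J(e)=0$ and Lemma \ref{irr=smooth} yields $L=e\cdot e+1$.

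The second ingredient is Proposition \ref{graph tree}: since $G$ is connected with at least two vertices, it is a tree, so the sum of the edge labels equals $n-1$. Expanding $e\cdot e=\sum_i e_i\cdot e_i + 2\sum_{i<j}e_i\cdot e_j$ and substituting the tree identity gives
\[
L-1 \;=\; \sum_{i=1}^n e_i\cdot e_i + 2(n-1).
\]
Subtracting the expression for $l_G$ above, I get
\[
L-1-l_G \;=\; 2(n-1)-n \;=\; n-2.
\]
The hypothesis $l_G=L-1$ therefore forces $n=2$, and together with $m_1=m_2=1$ this finishes the proof.

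There is no real obstacle here, because the heavy lifting has already been carried out: Lemma \ref{G+} disposes of multiplicities, Proposition \ref{graph tree} gives the combinatorial tree identity, and Lemma \ref{irr=smooth} supplies the clean formula $l_{e_i}=e_i\cdot e_i+1$ in the regime $e_i\cdot e_i\ge 0$. The only thing I would double-check is that the assumption $b^+=1$ (standing in the subsection) is not silently used; inspection of the computation shows it is not needed for this particular case, which is consistent with the simplicity of the conclusion.
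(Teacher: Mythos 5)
Your proof is correct and takes essentially the same approach as the paper: Lemma \ref{G+} disposes of the multiplicities, and the rest is the same dimension-versus-intersection count. The only cosmetic difference is that you route the count through the exact tree identity of Proposition \ref{graph tree} (sum of edge labels $=n-1$), whereas the paper only needs connectedness, extracting $e_i\cdot(e-e_i)=1$ for each $i$ from the equality case of \eqref{3terms}; your remark that $b^+=1$ is not used in this case is also consistent with the paper's argument.
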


\begin{proof}
In this case,  by Lemma \ref{G+}, $m_i=1$ for each $i$.  Moreover,   $e_i\cdot (e-e_i)=1$ for each $i$.  Since $G$ is connected, this is possible only if  $n=2$. 

\end{proof}

In the following we assume that $G_-$ is not empty. We first show that $G$ contains a centered subgraph. 
\subsubsection{A centered subgraph}

A simple tree graph is called centered if there is a vertex, called the center vertex,  which is adjoined to all  the other vertices.   Note that the graph in Example \ref{centered graph} is centered. 

\begin{lemma}\label{distinctivecurve}
 Assume  $G_{-}$ is non-empty and $l_G=L-1$. 
Then 
\begin{itemize}
\item The vertices in $G_+$  have the same weight with  
 $m_i=1$ and $e_i\cdot e_i=0$. 
 
\item  There is only one  vertex in 
$G_-$ which is adjoined to  the vertices in $G_{+}$.  Denote this vertex by $V_1$. $V_1$  has multiplicity one, and 
its class has self-intersection less than or equal to $k-n$.

\item The weighted subgraph consisting of the vertex $V_1$ and vertices in $G_+$ is  a centered graph with $V_1$ as the center. 

\item The weighted subgraph $G_-$ is connected. 
\end{itemize}
\end{lemma}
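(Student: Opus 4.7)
The plan is to extract the equality case of the chain of inequalities used in the proof of Lemma~\ref{reducible-dim}, then invoke $b^+ = 1$ via a Hodge-index argument in $e^\perp$ to pin down the structure around a distinguished $G_-$ vertex $V_1$. First, return to the estimate
\[
L \;\geq\; 1 + \sum_{j \geq k+1} m_j\, l_{e_j} + \sum_{i \leq k} m_i\, e_i\cdot e
\]
from the proof of Lemma~\ref{reducible-dim}; combining with $L-1 = l_G = \sum_{j \geq k+1} l_{e_j}$ (as $l_{e_i}=0$ for $i \leq k$), the $J$-nef bound $e_i\cdot e \geq 0$, and $l_{e_j} \geq 1$ for $j \geq k+1$, tightness forces three equalities: $m_j = 1$ for every $j \geq k+1$, $e_i\cdot e = 0$ for every $i \leq k$, and $e_j\cdot(e - e_j) = 1$ for every $j \geq k+1$ (the last from sharpening \eqref{m-i}). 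The identity $\sum_{i \neq j} m_i\, e_i\cdot e_j = 1$ with non-negative integer summands then forces each $V_j \in G_+$ to have exactly one neighbor $W(V_j) \in G$, with $m_{W(V_j)} = 1$ and edge label one; moreover $W(V_j) \in G_-$, since otherwise $\{V_j, W(V_j)\}$ would be an isolated component of the tree $G$, contradicting its connectedness together with $G_-$ nonempty.

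The heart of the argument uses $b^+ = 1$: since $e\cdot e = L - 1 \geq l_G \geq 1$, the intersection form restricted to $e^\perp \subset H_2(M;\mathbb{R})$ is negative definite. For any two distinct $V_{j_1}, V_{j_2} \in G_+$, consider
\[
v \;=\; (1 + e_{j_2}^2)\,e_{j_1} - (1 + e_{j_1}^2)\,e_{j_2}.
\]
From $e_{j_\ell}\cdot e = 1 + e_{j_\ell}^2$ one finds $v\cdot e = 0$, and from $e_{j_1}\cdot e_{j_2} = 0$ (neither is the other's neighbor) one computes $v\cdot v = (1+e_{j_2}^2)^2\, e_{j_1}^2 + (1+e_{j_1}^2)^2\, e_{j_2}^2$. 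Negative definiteness forces both non-negative squared terms to vanish, so $e_{j_1}^2 = e_{j_2}^2 = 0$; then $v = e_{j_1} - e_{j_2}$ has $v\cdot v = 0$ and $v\cdot e = 0$, hence $v = 0$ and the two classes coincide. Thus every $V_j \in G_+$ has the common weight $(e_+, 1)$ with $e_+^2 = 0$.

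The remaining bullets are bookkeeping. Set $V_1 := W(V_{k+1})$; for any other $V_j \in G_+$, the computation $e_j\cdot e_{V_1} = e_+\cdot e_{V_1} = 1 > 0$ produces an edge from $V_j$ to $V_1$, and uniqueness of $V_j$'s neighbor then forces $W(V_j) = V_1$. Expanding $0 = e_1\cdot e = e_1^2 + (n-k) + \sum_{i \neq 1,\, i \leq k} m_i\, e_1\cdot e_i$ and discarding the non-negative tail gives $e_1^2 \leq k - n$. The subgraph on $\{V_1, V_{k+1}, \ldots, V_n\}$ is then a star with center $V_1$ by construction, which is the centered graph claimed. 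Finally, since the $G_+$ vertices are leaves of $G$ all attached to $V_1 \in G_-$, removing them leaves a tree on the vertices of $G_-$, so $G_-$ is connected.

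The principal obstacle is the Hodge-index step: the specific coefficients $(1 + e_{j_2}^2)$ and $(1 + e_{j_1}^2)$ in $v$ are crucial, simultaneously landing $v$ in $e^\perp$ and arranging for the mixed term in $v\cdot v$ to cancel, thereby reducing matters to a sum of two non-negative terms whose vanishing yields both $e_{j_\ell}^2 = 0$ and $e_{j_1} = e_{j_2}$. The degenerate case $|G_+|=1$ needs a brief separate treatment since the pairwise comparison in the middle paragraph is vacuous there.
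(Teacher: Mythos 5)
Your argument is correct and follows the paper's proof essentially step for step: the same equality analysis of the dimension count \eqref{3terms} gives $m_j=1$, $e_j\cdot(e-e_j)=1$ for $G_+$ vertices and $e_i\cdot e=0$ for $G_-$ vertices, the same connectedness observation rules out two $G_+$ vertices being adjoined, and the remaining bullets are the same bookkeeping; the only difference is that where the paper cites the light cone lemma (via $b^+=1$) to conclude the $G_+$ classes are null and proportional and then pins the constant by adjunction, you prove that instance directly from negative definiteness of $e^{\perp}$, obtaining equality of the classes in one stroke. The degenerate case where $G_+$ has a single vertex, which you flag as needing separate treatment, is likewise untreated in the paper (its light-cone step also requires two distinct $G_+$ vertices), so your proof establishes exactly what the paper's own proof does.
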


\begin{proof}

 For each $i$ with $k+1\leq i\leq n$, $l_{e_i}\geq 1$. 
  Since $l_G=L-1$, 
it follows from Lemma \ref{G+} and \eqref{positiveint}  that, if  there are  $i\neq i'\geq k+1$ such that $e_i\cdot e_{i'}\neq 0$, 
then  $V_i$ and $V_{i'}$ are not adjoined to any other vertex.   But this is impossible
since $G_{-}$ is non-empty and $G$ is connected. 

Hence,  for $i\ge k+1$,  the vertices $V_i$ are not adjoined to each other. Since $b^+(M)=1$, by light cone lemma, $e_i\cdot e_i=0$ for each $i\geq k+1$,  and  for $i, i'\geq k+1$, $e_i= \alpha e_{i'}$.
By the adjunction formula, $\alpha=1$ for any pair.  We have established the first bullet.

By the first bullet, the vertices in $G_+$ are disjoint. It follows from Lemma \ref{G+} and  \eqref {positiveint} that,  each vertex in $G_+$ is adjoined to a unique vertex in $G_-$, and this vertex in $G_-$  has to have multiplicity one.
Since the classes of vertices in $G_+$ are the same, the vertices in $G_+$ are actually adjoined to the same vertex in $G_-$. Denote this vertex by $V_1$. 

It follows    from \eqref{negativeint}   that the class of $V_1$ has self-intersection less than or equal to  $k-n$. We have now established both the second and the third bullets. 

For the last bullet, it is a consequence of the second bullet since $G$ is connected. 
\end{proof}

Next we show that $G$ is a special kind of centered graph when $G_{-}$ is not empty and there are no $-1$ vertices.

\subsubsection{When $G_{-}$ is not empty and there are no $-1$ vertices}

\begin{lemma}\label{unique vertex} 
Suppose  $l_G=L-1$, $G_{-}$ is not empty and there are no $-1$ vertices.
Then $G_-$ contains a unique vertex $V$.  Furthermore, if $e_V\cdot e_V=  -b$,  then  $G$ is a centered graph with $b$ teeth. 

\end{lemma}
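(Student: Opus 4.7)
The plan is to combine the structural conclusions of Lemma \ref{distinctivecurve} with a canonical-class comparison to force $G_-$ to contain exactly one vertex. By Lemma \ref{distinctivecurve} I already have a distinguished vertex $V_1\in G_-$ with $m_1=1$, all vertices of $G_+$ sharing a common class $F$ with $F^2=0$ and $F\cdot e_1=1$, the count $|G_+|=L-1$, and $e\cdot e_i=0$ for every $V_i\in G_-$. Writing $c_i=-e_i\cdot e_i\ge 2$ (the no-$-1$-vertex hypothesis), the identity $e\cdot e_i=0$ becomes, for each $V_i\in G_-$ with $i\neq 1$,
$$
c_i m_i=\sum_{j\sim i,\,j\in G_-} m_j,
$$
and for $V_1$ it gives $\sum_{j\sim 1,\,j\in G_-,\,j\neq 1} m_j=b-(L-1)$, where $b=c_1$.

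The crucial step is to use $g_J(e)=0$ through the canonical class. From $adj(e)=-2$ one has $K_J\cdot e=-(L+1)$, and Corollary \ref{smooth component} tells us $g_J(e_i)=0$ for every $i$, whence $K_J\cdot e_i=c_i-2$ for $V_i\in G_-$ and $K_J\cdot e_i=-2$ for $V_i\in G_+$. Expanding $K_J\cdot e=\sum_i m_i K_J\cdot e_i$ and using $|G_+|=L-1$ yields
$$
\sum_{i\in G_-} m_i(c_i-2)=L-3,
$$
and isolating the $V_1$ contribution this reads
$$
\sum_{i\in G_-,\,i\neq 1} m_i(c_i-2)=L-1-b.
$$
By Lemma \ref{distinctivecurve} we have $b\ge L-1$, so the right side is non-positive, while every summand on the left is non-negative. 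Both sides must therefore vanish: $b=L-1$, and $c_i=2$ for every $V_i\in G_-$ with $i\neq 1$.

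Feeding $b=L-1$ back into the $V_1$ equation gives $\sum_{j\sim 1,\,j\in G_-,\,j\neq 1} m_j=0$; since each $m_j\ge 1$, this sum must be empty, so $V_1$ has no $G_-$-neighbour other than itself. Connectedness of $G_-$ (Lemma \ref{distinctivecurve}) then forces $k=1$, so $V=V_1$ is the unique vertex of $G_-$, with self-intersection $-b$. Since $|G_+|=L-1=b$ and each tooth is a class-$F$ curve meeting $V$ in one point, $G$ is precisely the centered graph with center $V$ and $b$ teeth.

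The main obstacle is recognizing that $g_J(e)=0$ still has unspent content after Lemma \ref{distinctivecurve} is in hand. The linear system $c_i m_i=\sum_{j\sim i,\,j\in G_-} m_j$ on its own admits spurious positive-integer solutions for $k\ge 2$ (for instance with a branched $G_-$ whose $c_i$'s are all $2$), and it is the extra identity coming from $K_J\cdot e=-(L+1)$ — which simultaneously controls both the multiplicities and the $c_i$ — that pins down $k=1$.
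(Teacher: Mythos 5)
Your argument is correct, but it takes a genuinely different route from the paper's. The paper first shows every vertex of $G$ has multiplicity one (no $-1$ vertices means no curve combination moves are needed to reach the configuration of Lemma \ref{combination}, so Lemma \ref{multi 1} applies), then argues combinatorially: by \eqref{negativeint} each $G_-$ vertex must have degree at least two in $G$, only $V_1$ touches $G_+$, so if $k\ge 2$ an edge count inside the connected subgraph $G_-$ produces a cycle, contradicting the tree structure from Proposition \ref{graph tree}. You instead run a canonical-class computation: pairing $K_J$ with $e$ and with each genus-zero vertex class, together with $|G_+|=L-1$, $m_1=1$, $e_1\cdot e_1\le -(L-1)$ and the no-$-1$-vertex bound $c_i\ge 2$, forces $\sum_{i\in G_-,\,i\ne 1}m_i(c_i-2)=L-1-b$ to vanish, hence $b=L-1$; feeding this into $e\cdot e_1=0$ shows $V_1$ has no neighbour in $G_-$, and connectedness of $G_-$ gives $k=1$. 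This bypasses both Lemma \ref{multi 1} and the tree/cycle argument, and it never needs to know the multiplicities of the other $G_-$ vertices — they simply sit in a non-negative sum that is forced to be zero. Two small citation points: the inputs $e\cdot e_i=0$ for $i\in G_-$, $e_j\cdot(e-e_j)=1$ for $j\in G_+$, and $|G_+|=L-1$ are not in the statement of Lemma \ref{distinctivecurve} but in its proof (\eqref{positiveint}, \eqref{negativeint}); they do follow quickly from the statement plus $l_G=L-1$, but you should say so or cite those displayed equations. Likewise $g_J(e_i)=0$ for the vertices of an abstract configuration $G$ comes from Proposition \ref{emb-comp-graph} (or the standing genus-zero convention in the definition of $l_G$) rather than Corollary \ref{smooth component}, which concerns actual subvarieties.
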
 
\begin{proof} We first show that all vertices of $G$ have multiplicity $1$. 
By the first bullet of Lemma  \ref{distinctivecurve},  this is true for any vertex in $G_+$. 
Since  there are no $-1$ vertices, no curve combination move is 
needed to achieve the configuration described in Lemma \ref{combination}. Apply Lemma \ref{multi 1} to conclude that all
vertices of self-intersection less than $-1$ also have multiplicity 1.

Now we show that   every vertex in $G_-$ is adjoined to 
 at least two vertices of $G$.
Since every vertex of $G$ has multiplicity one, and  each edge of $G$ has label  $1$ by Proposition \ref{graph tree}, we see that from \eqref{negativeint}, 
once a vertex in $G_-$ is adjoined to only one other vertex of $G$, its self-intersection should be $-1$. But this is excluded by our  assumption.

By the second bullet of Lemma \ref{distinctivecurve} there is  
  only one vertex $V_1$ in $G_-$ which is adjoined to vertices in  $G_+$. So  any other vertex in $G_-$ is adjoined to  at least two vertices in $G_-$.   
By the last bullet of Lemma \ref{distinctivecurve},    $G_-$ is connected. 
   So if $G_-$ has more than one vertex, $V_1$ is adjoined to at least one
  another vertex in $G_-$. 

 Thus, if $k\geq 2$,  twice of the number of edges in $G_-$ is at least 
 $$2(k-1)+1=2k-1>2(k-1).$$ 
This means that  there must be a cycle in the weighted subgraph $G_-$. This implies that there is a cycle in $G$ as well, which contradicts Proposition  \ref{tree}. Hence, there is only one vertex  in $G_-$ . 

Finally, we conclude that $G$ is a centered graph by the third bullet of Lemma  \ref{distinctivecurve}.
\end{proof}

The remaining  case is that $G_-$ contains $-1$ vertices. We start with the following observation. 

\subsubsection{$\tilde G$ in Lemma \ref{combination}}

\begin{lemma}\label{after rearrangement}
Suppose $\tilde G$ satisfies all the three bullets  in Lemma \ref{combination} and $l_{\tilde G}=L-1$ or $L$. Then $\tilde G$ contains no $-1$ vertices. 
\end{lemma}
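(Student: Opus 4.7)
The plan is to argue by contradiction, combining the dimension inequality \eqref{3terms} with the three structural bullets of Lemma \ref{combination}. First I would dispose of the case $l_{\tilde G}=L$: by Corollary \ref{reducible-dim'} any connected nef configuration with at least two vertices satisfies $l_G\le L-1$, so $\tilde G$ consists of a single vertex $(e_1,m_1)$. Since $g_J(e)=0$, Lemma \ref{multi 1} forces $m_1=1$, whence $e_1=e$, and Lemma \ref{irr=nef} gives $e\cdot e\ge 0$, excluding the $-1$ possibility.

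For the remaining case $l_{\tilde G}=L-1$, the previous step shows $\tilde G$ has $n\ge 2$ vertices. Lemma \ref{multi 1} again forces every multiplicity in $\tilde G$ to equal $1$, and Proposition \ref{graph tree} then shows that $\tilde G$ is a simple tree; a simple tree with $n$ vertices carries exactly $n-1$ edges, each of label $1$. I would then assume for contradiction the existence of a $-1$ vertex $E$, split the vertex set into $G_-$ (self-intersection $\le -1$) and $G_+$ (self-intersection $\ge 0$), and specialize \eqref{3terms} to the multiplicity-one situation to obtain
$$L\ge 1+l_{\tilde G}+\sum_{i\in G_-}e_i\cdot e.$$
Combining $l_{\tilde G}=L-1$ with the $J$-nefness of $e$ forces $e_i\cdot e=0$ for every $i\in G_-$.

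Since every edge has label $1$, the identity $e_i\cdot e=e_i^2+e_i\cdot(e-e_i)$ identifies $e_i\cdot(e-e_i)$ with the number of neighbors of the vertex indexed by $i$, so each $G_-$ vertex has exactly $-e_i^2$ neighbors; in particular $E$ has exactly one neighbor. On the other hand, bullet 2 of Lemma \ref{combination} guarantees that every neighbor of $E$ lies in $G_-$ with self-intersection $\le -2$, and bullet 3 (applied with multiplicities and labels all equal to $1$) then forces $E$ to have strictly more than one neighbor. This contradiction closes the argument. The delicate step is the multiplicity-one reduction via Lemma \ref{multi 1}: once that is in place, the simple-tree structure clicks in and the structural bullets of Lemma \ref{combination} collide cleanly with the intersection constraints from $J$-nefness.
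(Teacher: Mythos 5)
Your proof is correct and takes essentially the same route as the paper's: the case $l_{\tilde G}=L$ is Corollary \ref{reducible-dim'} plus nefness of $e$, and in the case $l_{\tilde G}=L-1$ your specialization of \eqref{3terms} just reproduces \eqref{negativeint} (giving $e\cdot E=0$ for a putative $-1$ vertex $E$), which the second and third bullets of Lemma \ref{combination} contradict by forcing $e\cdot E>0$. The detour through Lemma \ref{multi 1} and Proposition \ref{graph tree} to get multiplicity one and a simple tree is harmless but superfluous, since the contradiction $e\cdot E>0$ versus $e\cdot E=0$ needs neither.
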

\begin{proof} 
If $l_{\tilde G}=L$, then $\tilde G$ has only one vertex by Corollary \ref{reducible-dim'}. This vertex is not a $-1$ vertex since
its class is just $e$, which is assumed to be $J-$nef. 

Now let us assume that $l_{\tilde G}=L-1$. 
Notice that, if there is a $-1$ vertex $E$ in $ \tilde G$,  then by the second and the third bullets 
of Lemma \ref{combination}, we have 
 $e_{\tilde G}\cdot E>0$. But this contradicts to  \eqref{negativeint}. 
\end{proof}

In light of Lemma \ref{after rearrangement},   we next   analyze how $l_G$ changes under  curve  moves.
\subsubsection{$l_G$ under curve moves}

\begin{lemma}\label{lG-expan}
Let $G'$ be obtained from $G$ by a curve expansion. Then $l_G<l_{G'}$.

\end{lemma}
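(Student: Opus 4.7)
The statement is essentially a bookkeeping comparison, and I would prove it by tracking exactly what happens to each summand of $l_G = \sum_i l_{e_i}$ under a single curve expansion. The key observation to exploit is that $l_G$ is defined purely in terms of the \emph{classes} of the vertices (via $l_{e_i} = \max\{\iota_{e_i},0\}$) and ignores the multiplicities $m_i$. A curve expansion at a vertex $(e_C, m)$ with $e_C\cdot e_C \ge 0$ and $m>1$ removes that vertex and inserts $m$ new vertices, each with class $e_C$. Thus all other summands in the definition of $l_G$ are unchanged.

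First I would write down the explicit change:
\[
l_{G'} - l_G = m\, l_{e_C} - l_{e_C} = (m-1)\, l_{e_C}.
\]
Then I would show each factor is strictly positive. The factor $m-1 \ge 1$ is immediate from the definition of curve expansion, which requires $m>1$. For the other factor, recall that we are working in the setting of Section~4, where $e$ is $J$-nef with $g_J(e) = 0$, so by Corollary~\ref{smooth component} every irreducible component is a smooth rational curve; in particular $g_J(e_C) = 0$. Then Lemma~\ref{irr=smooth} gives $\iota_{e_C} = e_C\cdot e_C + 1$. Since curve expansion is only applied to vertices with $e_C\cdot e_C \ge 0$, we get $\iota_{e_C} \ge 1$, hence $l_{e_C} = \iota_{e_C} \ge 1$.

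Combining these two estimates yields $l_{G'} - l_G \ge (m-1)\cdot 1 \ge 1 > 0$, which is exactly the desired strict inequality. I do not expect any real obstacle: the lemma is a direct consequence of the fact that $l_G$ sums over vertices without weighting by multiplicity, so replicating a positive-self-intersection vertex strictly increases the total.
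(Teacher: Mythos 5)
Your proof is correct and is essentially the paper's own argument: the paper simply notes $D\cdot D+1 < n(D\cdot D+1)$ when $D\cdot D\ge 0$ and $n>1$, which is exactly your computation $l_{G'}-l_G=(m-1)l_{e_C}>0$ using $g_J(e_C)=0$ and $e_C\cdot e_C\ge 0$ (the genus-zero hypothesis is in any case built into the definition of $l_G$, so your appeal to Corollary \ref{smooth component} is fine but not even needed).
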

\begin{proof}
This is clear  since $$D\cdot D+1<n(D\cdot D+1)$$ if $D\cdot D\ge 0$ and $n>1$. 
\end{proof}

\begin{lemma}\label{dimnd} Let $G'$ be obtained from $G$ by a  curve combination, which is not of type (i)$_1$ with $D_1\cdot D_1\ne -1$. 
Then $l_G \leq l_{G'}$. Furthermore, $l_{G}=l_{G'}$ 
if and only if the class of each new  vertex of $G'$ has negative self-intersection. 
In particular, under such a move,  $l_G=l_{G'}$   if $l_G=L-1$ and $G'$ has more than one vertices.
\end{lemma}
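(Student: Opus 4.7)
The plan is to compute $l_{G'} - l_G$ directly for each of the three combination moves. First I note two preparatory facts. By Lemma \ref{adj=-2}, $G'$ is again a connected curve configuration with the same total class $e$. Since $l_G$ and $l_{G'}$ are defined only for trees of genus $0$, every vertex of $G$ has $g_J = 0$, and by Proposition \ref{graph tree} and Corollary \ref{smooth component} applied to $G'$, every new vertex of $G'$ does too. In particular, for any class $D$ appearing as a vertex, $\iota_D = D\cdot D + 1$ and $l_D = \max(D\cdot D + 1, 0)$. Moreover, by Lemma \ref{tree2}, any edge in a tree of genus $0$ whose vertices all have $adj = -2$ carries label $1$, so $D_1\cdot D_2 = 1$ in moves (i) and (ii), and $D_i \cdot E = 1$ in move (iii).

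For move (i)$_n$, the two vertices of classes $D_1, D_2$ are replaced by a single vertex of class $D_1 + D_2$, and $(D_1 + D_2)^2 = D_1^2 + D_2^2 + 2$. When $n \ge 2$, the convention that multiplicity greater than $1$ implies negative self-intersection forces $D_1^2, D_2^2 \le -1$; when $n = 1$, the excluded hypothesis gives $D_1^2 = -1$. A case check according to whether each of $D_1^2, D_2^2$ is non-negative, equal to $-1$, or at most $-2$ then shows $l_{D_1 + D_2} \ge l_{D_1} + l_{D_2}$, with equality exactly when $(D_1 + D_2)^2 < 0$. For move (ii), $D_1$ appears as a vertex of both $G$ and $G'$, so the change reduces to $l_{D_1+D_2} - l_{D_2}$. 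The move condition $D_1\cdot D_2 \ge -D_1\cdot D_1$ combined with $D_1\cdot D_2 = 1$ and the multiplicity condition $n_1 \ge 2$ (hence $D_1^2 \le -1$) forces $D_1^2 = -1$, whence $(D_1+D_2)^2 = D_2^2 + 1$, and a similar case check gives $l_{D_1+D_2} \ge l_{D_2}$, with equality exactly when $(D_1+D_2)^2 < 0$. For move (iii)$_t$, each new class is $D_i + E$ with $(D_i + E)^2 = D_i^2 + 1 \le -1$, so $l_{D_i + E} = 0$; the disappearing $E$ and $D_i$ also have $l = 0$ since $E^2 = -1$ and $D_i^2 \le -2$, and so $l_G = l_{G'}$, consistent with every new vertex having negative self-intersection.

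For the final sentence, once $l_G \le l_{G'}$ is established, $G'$ being a connected curve configuration with total class $e$ and more than one vertex lets us apply Corollary \ref{reducible-dim'} to get $l_{G'} \le L - 1$. Together with $l_G = L - 1 \le l_{G'}$, we conclude $l_G = l_{G'} = L - 1$. The main obstacle I anticipate is the bookkeeping for moves (i) and (ii); the crux is showing that the multiplicity hypothesis combined with the move-(ii) condition forces $D_1^2 = -1$, since without this the argument in (ii) would split into cases that could violate the inequality (and indeed the need to exclude move (i)$_1$ with $D_1^2 \ne -1$ in the hypothesis reflects exactly such a degenerate scenario).
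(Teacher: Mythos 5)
Your proof is correct in the context where the lemma is applied, and its skeleton is the same as the paper's: compute the local change in $l$ move by move, then get the last sentence from Corollary \ref{reducible-dim'}. The difference is in how you handle moves (ii) and (iii): you import the ambient tree structure (all edge labels equal $1$, which really comes from Definition \ref{deftree} via Proposition \ref{graph tree}, not from Lemma \ref{tree2}) to force $D_1\cdot D_2=1$ and hence $D_1\cdot D_1=-1$ in move (ii), and to make move (iii) give $l_G=l_{G'}$ automatically with all new squares negative. The paper does not use unit labels at all: for move (ii) it only uses $n_1\ge 2$ (so $l_{D_1}=0$) together with the move condition $D_1\cdot D_2\ge -D_1\cdot D_1$, which gives $(D_1+D_2)^2>D_2^2$ directly, and for move (iii) it simply notes that the disappearing vertices contribute $0$ while any new vertex $V$ has $l_V\ge 0$, with equality exactly when $V^2<0$ (so with larger labels a type (iii) move can genuinely increase $l$, a case your argument never sees). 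So the paper's proof is marginally more general and avoids invoking Proposition \ref{graph tree}, whereas yours leans on the tree hypothesis that is anyway implicit in the definition of $l_G$; both are fine here, but note the two citation slips (genus $0$ of the new vertices is best sourced from Proposition \ref{emb-comp-graph} or from the label-$1$ genus computation, not from Corollary \ref{smooth component}, which is about subvarieties).
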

\begin{proof}

For move (i)$_n$ with $n\ge 2$, the part modified has $l_{D_1}+l_{D_2}=0$.

For move (iii), the part modified has $\sum_{i=1}^tl_{D_i}+l_{E}=0$.

In these two cases, $l_G \leq l_{G'}$ since a new vertex $V$ always has $l_V\ge 0$. The equality $l_G = l_{G'}$ holds if and only if $V$ has negative self-intersection.

For move (ii), since $n_1\ge 2$, we have $l_{D_1}=0$. Meanwhile, $$(D_1+D_2)\cdot (D_1+D_2)>D_2\cdot D_2,$$which implies $l_G \leq l_{G'}$. The equality $l_G=l_{G'}$ holds if and only if $(D_1+D_2)\cdot (D_1+D_2)<0$.

For move (i)$_1$ with  $D_1\cdot D_1=-1$, we have $$D_2\cdot D_2+1<(D_1+D_2)\cdot (D_1+D_2)+1.$$ Similarly, $l_G=l_{G'}$ if and only if $(D_1+D_2)\cdot (D_1+D_2)<0$.

The last statement follows from Corollary \ref{reducible-dim'}.
\end{proof}

\subsubsection{$\tilde G$ in Lemma \ref{combination} revisited}
 Given the two lemmas above, we have the following  more precise description of $\tilde G$.

\begin{lemma}\label{tilde G}
Suppose $G$ contains a $-1$ vertex and $l_G=L-1$. We apply curve  moves   as in  Lemma \ref{combination}
 to adjust $G$ to a configuration $\tilde G$ satisfying all the three bullets there. 
 Let $G_p, ..., G_1$ be the intermediate graphs. 
 Then 
 
 \begin{itemize} 
 
\item  $l_{G_i}=L-1$, 

\item   $l_{\tilde G}=L-1$ or $L$,

\item  There are no $-1$ vertices in $\tilde G$,

\item If $\tilde G$ has at least 2 vertices,  then it is either a  graph with precisely 2 vertices as in   Lemmas \ref{caseI},  or a centered graph as in  \ref{unique vertex},

\item      $\tilde G$ is a simple tree graph.

\item $\tilde G_-$ contains at most one vertex.

\end{itemize}
  \end{lemma}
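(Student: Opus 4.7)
The plan is to track the dimension $l_{G_i}$ through the rearrangement chain $G = G_{p+1} \to G_p \to \cdots \to G_1 \to \tilde G$, exploiting the tension between Lemma \ref{dimnd} (curve combinations never decrease $l$) and Lemma \ref{lG-expan} (curve expansion strictly increases $l$). For bullet 1, I would first observe that each intermediate $G_i$ must have at least two vertices, since otherwise it would already satisfy the three bullets of Lemma \ref{combination} and coincide with $\tilde G$. Corollary \ref{reducible-dim'} therefore gives $l_{G_i} \le L-1$ for every such $G_i$. I then argue inductively that no interior move can be a curve expansion: were $G_i$ obtained from $G_{i+1}$ by an expansion, Lemma \ref{lG-expan} would force $l_{G_i} \ge L$, yet $G_i$ remains multi-vertex since expansion only increases the vertex count, contradicting Corollary \ref{reducible-dim'}. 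Thus every interior move is a combination, so Lemma \ref{dimnd} makes the sequence $l_{G_{p+1}}, l_{G_p}, \ldots, l_{G_1}$ non-decreasing and bounded above by $L-1$; combined with $l_G = L-1$, this forces equality throughout.

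For bullet 2, apply the same dichotomy to the terminal move $G_1 \to \tilde G$. If $\tilde G$ still has at least two vertices, the argument above yields $l_{\tilde G} = L-1$. If $\tilde G$ has a single vertex, its class must be $e$; one then checks that its multiplicity is one, since $J$-nefness of $e$ rules out a negative self-intersection and Lemma \ref{multi 1} rules out multiplicity $\ge 2$ given $g_J(e) = 0$; consequently $l_{\tilde G} = e \cdot e + 1 = L$. Bullets 3--6 then follow from the structure of $\tilde G$. Bullet 3 is exactly Lemma \ref{after rearrangement} given $l_{\tilde G} \in \{L-1, L\}$. For bullet 4, when $\tilde G$ has multiple vertices, the absence of $-1$ vertices splits according to whether $\tilde G_-$ is empty: if empty, Lemma \ref{caseI} yields a two-vertex configuration with multiplicities one; if non-empty, Lemma \ref{unique vertex} applies (precisely because no $-1$ vertices are present) and gives a centered configuration with a unique vertex in $\tilde G_-$ and all multiplicities one. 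Bullet 5 follows immediately from the multiplicity-one statements combined with Proposition \ref{graph tree} for the tree property, and bullet 6 is a direct read-off of the classification in bullet 4, together with the observation that in the single-vertex sub-case the lone vertex has non-negative self-intersection so $\tilde G_-$ is empty.

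The main obstacle is controlling curve expansion in the interior of the rearrangement, given that Lemma \ref{combination} explicitly allows expansions after a move (iii) creates a vertex of non-negative self-intersection and multiplicity $\ge 2$. The resolution is precisely the strict $l$-increase from Lemma \ref{lG-expan} combined with the cap from Corollary \ref{reducible-dim'}: any interior expansion would push $l$ past $L-1$ while keeping the graph multi-vertex, which is incompatible. Once this linchpin is established, the remaining bullets fall out by invoking the classification lemmas \ref{caseI} and \ref{unique vertex} and the tree structure from Proposition \ref{graph tree}.
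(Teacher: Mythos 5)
Your proposal is correct and follows essentially the same route as the paper: the first two bullets are derived from Lemmas \ref{lG-expan}, \ref{dimnd} and Corollary \ref{reducible-dim'} via the cap-versus-monotonicity tension, the third from Lemma \ref{after rearrangement}, and the fourth (hence the last two) from Lemmas \ref{caseI} and \ref{unique vertex}. The only point to note is that Lemma \ref{dimnd} excludes type (i)$_1$ moves with $D_1\cdot D_1\neq -1$, which is harmless here because, as the paper observes, every combination performed in the rearrangement of Lemma \ref{combination} involves a $-1$ vertex.
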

\begin{proof}
Notice that   the curve combinations in Lemma \ref{combination}  only involve $-1$ vertices, the first and second bullets  follow from  Lemmas \ref{lG-expan},  \ref{dimnd} and Corollary \ref{reducible-dim'}.

The third bullet follows from the second bullet and Lemma \ref{after rearrangement}. 

We now prove the  fourth bullet. If $\tilde G$ has at least 2 vertices, by Corollary \ref{reducible-dim'} and the second bullet, $l_{\tilde G}=L-1$. 
Since $\tilde G$ contains no $-1$ vertices by the third bullet, the statement follows from 
 Lemmas \ref{caseI},  \ref{unique vertex}.
 
The last two bullets follows from the fourth bullet.

\end{proof}

We will see that only the following restricted moves, which we call  combinatorial blow-downs, are needed to obtain $\tilde G$ from $G$.  
 
\subsubsection{Combinatorial blow-downs}

\begin{definition}\label{co blowdown}
A simple combinatorial blow-down applied to a weighted graph $G$ is the following  process of removing a $-1$ vertex $V$ of genus $0$.
\begin{enumerate}
\item Either  $V$ is weighted by $(v, t)$ and adjoined to only one vertex $U$ weighted by $(u, t)$ with $u\cdot v=1$, then in the new graph these two vertices are removed and a new vertex $U'$ weighted by $(u+v, t)$ is added.
\item Or  $V$ is weighted by $(v, t_1+t_2)$ and adjoined to exactly two vertices $U_1$ weighted by $(u_1, t_1)$ and $U_2$ weighted by $(u_2, t_2)$ with edges labeled by one, i.e. $v\cdot u_1=v\cdot u_2=1$, then these three vertices are replaced by two new vertices $U_1'$ weighted by $(u_1+v, t_1)$ and $U_2'$ weighted by $(u_2+v, t_2)$. 
\end{enumerate}
The inverse process is called a simple combinatorial blow-up.
\end{definition}
Geometrically, the first bullet corresponds to blowing up at a smooth point in the subvariety, 
the second bullet corresponds to blowing up at a transversal intersection point of two irreducible components.

\subsubsection{Each move is a combinatorial blow down}

\begin{lemma}\label{minimal graph}
Suppose $G$ contains a $-1$ vertex and $l_G=L-1$. Then after applying simple combinatorial blow-downs,  $G$ can be turned into a curve configuration $\tilde G$ with no $-1$ vertices.
There are two cases: 

\begin{itemize}
\item $\tilde G$ consists of  only  one vertex,   whose class has non-negative self-intersection. In this case, 
except for the last blow-down, all the  vertices involved in  blow-downs have classes
with  negative self-intersection.

\item $\tilde G$ is a centered graph. In this case, all the vertices involved in blow-downs have classes
with  negative self-intersection.
\end{itemize}
\end{lemma}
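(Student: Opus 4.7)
The approach is to combine Lemma~\ref{tilde G} with a refined analysis of each move. By Lemma~\ref{tilde G}, applying the moves of Lemma~\ref{combination} yields a sequence $G = G_p, G_{p-1}, \ldots, G_1, G_0 = \tilde G$ of connected nef configurations with total class $e$, each intermediate $G_i$ satisfying $l_{G_i} = L-1$ and being a tree by Proposition~\ref{graph tree} (so by Definition~\ref{deftree} every edge has label one), while $\tilde G$ contains no $-1$ vertices and takes one of the forms of Lemma~\ref{caseI} or Lemma~\ref{unique vertex}. The plan is to show each individual move in the sequence is a simple combinatorial blow-down per Definition~\ref{co blowdown}, then identify $\tilde G$ with the two conclusions.

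The crucial technical input, and the main obstacle in executing the plan, is the following multiplicity identity for every intermediate graph $G_i$: tracing equality in the chain~\eqref{3terms} under the assumption $l_{G_i} = L-1$ forces $e_V \cdot e = 0$ for every $V \in G_-$, which combined with the tree edge label one yields $\sum_{U \sim E} m_U = m_E$ for every $-1$ vertex $E$ in $G_i$. Armed with this identity, I rule out the non-blow-down moves. Curve expansion is impossible: it strictly increases $l$ by Lemma~\ref{lG-expan}, whereas $l \leq L-1$ on any multi-vertex configuration by Corollary~\ref{reducible-dim'}. Move~(ii) would involve a $-1$ vertex $D_1$ of multiplicity $n_1$ adjoined to $D_2$ of smaller multiplicity $n_2$ with $D_2 \cdot D_2 \geq -1$: if $D_2 \in G_+$, then Lemma~\ref{distinctivecurve} identifies $D_1$ with the multiplicity-one vertex $V_1$, forcing $n_1 = 1$ and contradicting $n_1 > n_2 \geq 1$; if $D_2 \cdot D_2 = -1$, applying the identity to $D_2$ yields $n_2 \geq n_1$, again a contradiction.

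Next I show moves~(i) and~(iii) are blow-downs. For move~(i)$_n$ on a $-1$ vertex $E$ with equal-multiplicity neighbor $D$, the identity forces all other neighbors of $E$ to have multiplicities summing to zero, so $E$ is adjoined only to $D$, matching case~(1) of Definition~\ref{co blowdown}. For move~(iii)$_t$, preservation of the tree structure between $G_i$ and $G_{i-1}$ requires the sum of labels to drop by exactly one; by~\eqref{iii12} the change is bounded below by $\frac{t(t-3)}{2}$, so $t \leq 2$, and the equality condition forces $D_i \cdot E = 1$. The tree condition then gives $\sum_{i=1}^t n_i = n_0$, which matches case~(1) when $t = 1$ and case~(2) when $t = 2$.

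Finally I match $\tilde G$ with the two conclusions. The Lemma~\ref{caseI} scenario for $\tilde G$ (two $G_+$-vertices, $G_-$ empty) is excluded here by tracing a putative last blow-down backwards and deriving a multiplicity contradiction analogous to the move~(ii) analysis. Thus $\tilde G$ is either a single vertex with non-negative self-intersection (case~(a)) or a centered graph (case~(b)). By Lemma~\ref{dimnd}, a blow-down preserves $l$ if and only if every new vertex has negative self-intersection; in case~(a), $l$ jumps from $L-1$ to $L$ precisely at the last blow-down, corresponding to the terminal $V_1$ being blown down against a single tooth, while in case~(b) every blow-down preserves $l = L-1$ and involves only vertices of $G_-$, all of negative self-intersection.
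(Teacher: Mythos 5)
Your overall architecture coincides with the paper's: take the sequence $G=G_{p+1},\dots,G_0=\tilde G$ from Lemma \ref{tilde G}, rule out expansions via Lemma \ref{lG-expan} and Corollary \ref{reducible-dim'}, rule out move (ii), show that moves (i) and (iii)$_t$ (with $t\le 2$ forced by the tree structure and \eqref{iii12}) are simple combinatorial blow-downs, and read off the two shapes of $\tilde G$ together with the negativity statements from Lemma \ref{dimnd}. Your extra ingredient, the identity $\sum_{U\sim E}m_U=m_E$ for every $-1$ vertex $E$ of an intermediate graph (the equality case \eqref{negativeint} of Lemma \ref{distinctivecurve} combined with label-one edges of a tree), is correct and is a genuinely useful refinement: it is exactly what is needed to check that in a type (i) move the $-1$ vertex is adjoined to no vertex other than its merging partner, i.e.\ the adjacency hypothesis of Definition \ref{co blowdown}(1), a point the paper passes over with ``clearly''.

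Two spots need repair. First, your exclusion of move (ii) only treats the orientation in which the $-1$ vertex is the higher-multiplicity vertex $D_1$; in the rearrangement of Lemma \ref{combination} the move can equally be applied with the $-1$ vertex in the role of $D_2$ (smaller multiplicity, adjoined to a higher-multiplicity vertex of self-intersection at least $-1$). Your identity kills this orientation too, since a $-1$ vertex cannot have a neighbor of strictly larger multiplicity, but you must say so; the paper instead rules out (ii) at intermediate steps by noting $(D_1+D_2)\cdot(D_1+D_2)\ge 0$ contradicts Lemma \ref{dimnd}, and at the last step via Lemma \ref{G+}. Second, and more seriously, your exclusion of the two-vertex Lemma \ref{caseI} shape for $\tilde G$ ``by a multiplicity contradiction analogous to the move (ii) analysis'' does not work as stated: if the last move were of type (i)$_1$ or (iii)$_1$ and produced such a $\tilde G$, every multiplicity in $G_1$ is already $1$, so there is no multiplicity to contradict. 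The contradiction must come from elsewhere: either from Lemma \ref{dimnd} together with Corollary \ref{reducible-dim'} (a final blow-down creating a vertex of non-negative self-intersection forces $l_{\tilde G}\ge L$, hence $\tilde G$ is a single vertex, while a final blow-down all of whose new vertices are negative leaves $\tilde G_-$ nonempty, so $\tilde G$ is centered), which is essentially the paper's route, or from the structural constraints of Lemma \ref{distinctivecurve} applied to $G_1$. Relatedly, your case (b) assertion that the old neighbor in a type (i) move lies in $G_-$ still needs the one-line computation $u_1\cdot u_1=(u_1+u_2)\cdot(u_1+u_2)-1<0$ supplied by Lemma \ref{dimnd}.
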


\begin{proof}
We apply curve  moves  to adjust $G$ to a configuration $\tilde G$ as in Lemma \ref{tilde G}. 

 Let $G_p, ..., G_1$ be the intermediate graphs. It is convenient to sometimes write $G=G_{p+1}$ and $\tilde G=G_0$. 

By Lemma \ref{tree2} and Lemma \ref{tree3},  for each $i\geq 0$, each $G_i$ is a tree graph of genus $0$ vertices.  Further, by Lemma \ref{tilde G}, $G_0=\tilde G$ is a {\it simple} tree graph and $\tilde G_-$ contains at most one vertex. 

 For $i\geq 1$, $G_i$ contains at least 2 vertices, one of them is a $-1$ vertex. In fact, the move from $G_i$ to $G_{i-1}$ involves a
$-1$ vertex of $G_i$.  By Lemma  \ref{tilde G},  $\tilde G$ has no $-1$ vertices, 
\begin{equation} \label{lGi} l_{G_i}=L-1  \hbox{    for $i>0$, \quad and    \quad }    l_{\tilde G}=L-1  \hbox{ or $L$.} 
\end{equation}

 \noindent{\bf No expansion moves:}  
First we notice that curve expansion never occurs in the process. 
By Lemma \ref{lG-expan} and \eqref{lGi}, it could only possible occur in the last step,  from $G_1$ to $\tilde G$ and  when $l_{\tilde G}=L$. If this is the case,
then $\tilde G$ has more than one vertex since expansion increases the number of vertices. However, this is impossible since    in this case  $\tilde G$ consists of a single vertex with multiplicity one due to the assumption $l_{\tilde G}=L$.

\noindent {\bf The  move from $G_q$ to $G_{q-1}$ for $q\geq 2$:}
We know it is a {\it combination} move involving a $-1$ vertex. We   will show that it is a  simple combinatorial blow down.

We first make a general observation. 
Notice that for $q\geq 2$,  $l_{G_q}=l_{G_{q-1}}$.  Therefore, by Lemma \ref{dimnd},  the classes of  the new vertices in $G_{q-1}$ have  negative self-intersection.  

I.   Suppose for some $q\geq 2$ the move from $G_q$ to $G_{q-1}$ is a type (i) move. 
Then there are two adjoined vertices of the tree graph $G_1$, 
 $U_1$ weighted by $(u_1, t)$ and  $U_2$ weighted 
by $(u_2, t)$,
one of them, say $U_2$,  is a $-1$ vertex, and they are replaced by  a vertex $U$ of $\tilde G$ weighted by 
$(u_1+u_2, t)$.     Clearly,  this move is just  a  type (1)  simple combinatorial blow-down.

Notice that $(u_1+u_2)\cdot (u_1+u_2) =u_1\cdot u_1+2-1=u_1\cdot u_1+1$ is negative. Therefore $u_1\cdot u_1$ is negative as well.

II. Moves (ii) are not needed. 
Suppose for some $q\geq 2$ the move from $G_q$ to $G_{q-1}$ is a type (ii) move in the proof of Lemma \ref{combination}. 
Since such a move is applied to a $-1$ vertex and another vertex whose class has self-intersection at least $-1$, 
 the class of one new vertex of $G_{q-1}$ has non-negative self-intersection. But this is impossible.

III.  Suppose for some $q\geq 2$ the move from $G_q$ to $G_{q-1}$ is a type (iii)$_t$ move. 
Then there are $t$  
vertices $U_i$ of $G_1$ weighted by $(u_i, n_i)$, $1\le i\le t$, and a $-1$ vertex $V$ of $G_1$ weighted by $(v, n_0)$, 
such that $$u_i\cdot u_i\leq -2, \quad \hbox{and}  \quad \sum_j n_i (u_i\cdot v)=n_0.$$
They are replaced by  vertices $W_i$ weighted by $(u_i+v, n_i)$.

Both $G_q$ and $G_{q-1}$ are tree graphs, and since the number of vertices of $G_{q-1}$ is 1 less than that of vertices of $G_q$, the number of edges of 
$G_{q-1}$ is also 1 less than that of labels of $G_q$. 
Apply the inequality \eqref{iii12},  we find that it is only possible that $t=1$ or $2$.

When $t=1$, the  move is also a type (i) move, so it is a type (1) blow-down.  As already shown, the classes of the involved vertices all have negative self-intersection.

Now assume that  $t=2$.   Notice that  $u_i\cdot v=1$ since $G_q$ is a tree graph. Hence this move  is a type (2)  simple combinatorial blow-down.
We just need to verify the classes of the involved vertices all have negative self-intersection. This is true for $U_1, U_2$ and $V$ by assumption. For $W_i$, this is also true since
$(u_i+v)\cdot (u_i+v)\leq -2+2-1=-1$. 

\noindent {\bf The move from $G_1$ to $\tilde G$:} 
The next step is to analyze  the  curve  move from $G_1$ to $G_0=\tilde G$.

I. Suppose this step is a type (i) move. 
We have already shown that it is  a type  (1) combinatorial blow-down.
Here we have 

Since $\tilde G$ is simple, $t$ can only be equal to $1$.

If $(u_1+u_2)\cdot (u_1+u_2)\geq 0$,
then $l_{u_1+u_2}\ge l_{u_1}+l_{u_2}+1$ and hence $l_{\tilde G}\geq l_{G_1}+1=L$.  By Corollary \ref{reducible-dim'}, $\tilde G$  consists of  a single vertex weighted by $(u_1+u_2, 1)$. 
Thus this case  corresponds to the  first bullet of Lemma \ref{minimal graph}.

If $(u_1+u_2)\cdot (u_1+u_2)<0$,  then $l_{\tilde G}=l_{G_1}=L-1$ and $\tilde G_-$ is not empty.  
Hence $\tilde G$ is a centered graph as in  Lemma \ref{unique vertex}.
Moreover, notice also that $u_1\cdot u_1<- u_2\cdot u_2-2u_1\cdot u_2=1-2=-1$. 
Thus this  move 
is  a combinatorial blow-down and the classes of all the vertices involved have negative self-intersection.  This case corresponds to   the second bullet of Lemma \ref{minimal graph}.

II.  Suppose this step  is a type (ii) move. 
Then there are two adjoined vertices of the tree graph $G_1$, 
 $U_1$ weighted by $(u_1, t_1)$ and  $U_2$ weighted 
by $(u_2, t_2)$ with $t_1>t_2$ and  
\begin{equation} \label{u1} u_1\cdot u_1\geq -u_1\cdot u_2=-1.
\end{equation}
One of them  is a $-1$ vertex, and  
they are replaced by  a vertex $U$ weighted by 
$(u_1+u_2, t_2)$ and a vertex $V$ weighted by $(u_1, t_1-t_2)$.

If  $U_1$ is  a $-1$ vertex.
then the vertex $V$ of $\tilde G$ is  a $-1$ vertex. But $\tilde G$ doesn't contain any $-1$ vertex, so $U_1$ is not a $-1$ vertex, and from \eqref{u1} we must have  $u_1\cdot u_1\geq 0$. 
We then conclude that  $t_1=1$ by Lemma \ref{G+}. But then $t_2=0$ since $t_2<t_1$. This simply means that this step cannot be a type (ii) move.

III. Suppose this step is a type  (iii)$_t$ move. 

  Since $G_1$ is a tree graph,  $u_i\cdot v=1$. 
Hence $$(u_i+v)\cdot (u_i+v)\leq -2 +2-1=-1$$ for $1\leq i\leq t$. Since $\tilde G_-$ contains at most one vertex, we have $t=1$. 
Thus in this case $\tilde G$ is a centered graph. 

Since $\tilde G$ is a simple graph, 
we have $n_1=1=n_0$.  In other words, this move  is actually a (special case of) type (i)$_1$ move, and this case corresponds to the first bullet of Lemma \ref{minimal graph}.

We thus complete our proof.
\end{proof}

\subsubsection{Summary}
Thanks to  Lemmas \ref{caseI},  \ref{unique vertex},   \ref{minimal graph}, 
we can completely describe those $G$ with $l_G=L-1$.

\begin{prop}\label{final}
Suppose $b^+(M)=1$,  $e$ is a $J-$nef class with $g_J(e)=0$. 
Let $G$ be a connected curve configuration with class $e$ and $l_G=L-1$.  

If $G$ does not contain any $-1$ vertex,  then $G$ is a simple graph tree. Moreover, it is either a  graph with precisely 2 vertices as in   Lemmas \ref{caseI},  or a centered graph as in  \ref{unique vertex}.
If $G$ contains a $-1$ vertex, then $G$ is as described in Lemma \ref{minimal graph}.
\end{prop}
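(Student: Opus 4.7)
The plan is to present Proposition \ref{final} as a direct synthesis of the preceding lemmas, splitting into the two cases according to whether $G$ contains a $-1$ vertex. Essentially no new computation is required; the role of this final statement is to package the case analysis into one clean classification.

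First I would treat the case where $G$ contains no $-1$ vertex. Here I need to rule out high multiplicity and reduce to either Lemma \ref{caseI} or Lemma \ref{unique vertex}. The $G_+$ part is already simple by Lemma \ref{G+}. For the $G_-$ part, since no $-1$ vertex appears, no curve combination move of type (iii) or (ii) needs to be performed to bring $G$ into the form of Lemma \ref{combination}; only curve expansions would be needed, but expansions strictly increase $l_G$ (Lemma \ref{lG-expan}), which would violate $l_G = L-1$ by Corollary \ref{reducible-dim'}. Hence $G$ already satisfies the bullets of Lemma \ref{combination}, and Lemma \ref{multi 1} forces every vertex of $G$ to have multiplicity one. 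Combined with Proposition \ref{graph tree} this makes $G$ a simple tree graph. Then I split further: if $G_-$ is empty, Lemma \ref{caseI} forces $G$ to have exactly two vertices; if $G_-$ is non-empty, Lemma \ref{unique vertex} forces $G$ to be a centered graph.

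Next I would handle the case where $G$ contains a $-1$ vertex. This case is already the content of Lemma \ref{minimal graph}, so essentially the proof is a one-line citation: by Lemma \ref{minimal graph}, $G$ reduces via simple combinatorial blow-downs either to a single vertex of non-negative self-intersection, or to a centered graph with no $-1$ vertex, with all vertices involved in the blow-downs having classes of negative self-intersection (with the one exception noted in the first bullet of that lemma). Putting the two cases together gives precisely the statement of Proposition \ref{final}.

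The only point that requires any genuine care, and thus the place where I would focus attention, is the justification that in the no-$(-1)$-vertex case one may really apply Lemma \ref{multi 1} directly to $G$ itself without first running the rearrangement. The subtlety is that Lemma \ref{multi 1} is formulated for graphs satisfying the three bullets of Lemma \ref{combination}, and I need the observation above (that under the hypothesis $l_G = L-1$ with no $-1$ vertex, no combination or expansion move is actually forced) to legitimately bypass the rearrangement step. Once this is noted the rest is bookkeeping, and the proof of the proposition amounts to a short case split followed by invocations of Lemmas \ref{caseI}, \ref{unique vertex}, and \ref{minimal graph}.
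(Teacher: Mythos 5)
Your proposal is correct and takes essentially the same route as the paper, which proves Proposition \ref{final} simply by combining Lemmas \ref{caseI}, \ref{unique vertex} and \ref{minimal graph} according to whether a $-1$ vertex is present. The one point you flag as needing care---that with no $-1$ vertex the configuration already satisfies the bullets of Lemma \ref{combination} (no combination or expansion move is forced, using Lemma \ref{G+}), so Lemma \ref{multi 1} applies directly to give multiplicity one and hence a simple tree---is precisely the argument the paper carries out inside the proof of Lemma \ref{unique vertex}, so your write-up adds no new content beyond the paper's own synthesis.
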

\subsubsection{Maximal dimension strata of $\mathcal M_{red, e}$}
We translate Proposition \ref{final} to the description of 
the maximal dimension strata of  $\mathcal M_{red, e}$. 

To state the result,  for $\Theta \in \mathcal M_{red, e}$, write $\Theta=\Theta_{+}\cup \Theta_{-}$, 
where $\Theta_{-}$ contains each  pair  $(C,m)$ with  $e_C\cdot e_C\leq -1$. Label  the pairs in $\Theta_{-}$ by $(C_1, m_1),\cdots, (C_k, m_k)$.
A pair $(C, 1)$ is called a $-1$ component if $C$ is a smooth rational curve with $e_C\cdot e_C=-1$. 

\begin{theorem}\label{cd1conf}
Suppose $b^+(M)=1$,  $e$ is a $J-$nef class with $g_J(e)=0$. 
Let $\Theta=\{(C_i,m_i), 1\leq i\leq n\}$ be a subvariety in $\mathcal M_{red, e}$, and $L=l_e$. 
Then $L= 1+\sum_{i=1}^n l_{e_{C_i}}$  only if  each $m_i$ is equal to  $1$.

Moreover, $\Theta$ satisfies the following conditions: 

$\bullet$ If $\Theta_-$ is empty then 
$n=2$,   $e_{C_1}\cdot e_{C_2}=1$,  $e_{C_i}\cdot e_{C_i}\geq 0$.

$\bullet$ If $\Theta_{-}$ is not empty and there is no $-1$ component, then 
$\Theta_{-}$ consists of a unique component $C_1$ with $e_{C_1}\cdot e_{C_1}=1-n\leq -2$, and 
$\Theta_{+}$  consists of  at least $n-1\geq 2$ components $C_i, i\geq 2$, with $e_{C_i}=\cdots= e_{C_n}$ and $e_{C_i}\cdot e_{C_i}=0$.
Moreover,  $e_{C_1}\cdot e_{C_2}=1$. In short,   
$\Theta$ is a comb like configuration.  

$\bullet$ Suppose $\Theta_{-}$ contains a $-1$ component.  Then there are two cases.

\begin{enumerate}
\item   $\Theta$ is a successive  blow-up of
a  smooth rational curve with non-negative self-intersection. And from the second blow-up, 
we only blow up at a point not lying in any component  with non-negative self-intersection (there is at most one such component).

\item  $\Theta$ is a successive   blow-up
of a comb like configuration in the second bullet at points  in $C_1$ and its proper transforms.  
Moreover, the blow-up points do not lie in $C_i$ for each $i\geq 2$. 
\end{enumerate}
Conversely, if $\Theta$ is as in any bullet above, then $L= 1+\sum_{i=1}^n l_{e_{C_i}}$.
\end{theorem}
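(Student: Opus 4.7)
The plan is to translate Proposition \ref{final}---the graph-level classification of configurations with $l_G=L-1$---back into the language of subvarieties, and then verify the converse direction by direct dimension computations. Setting $G$ to be the underlying weighted graph of $\Theta$ with vertices $(e_{C_i},m_i)$, one has $l_G=\sum_i l_{e_{C_i}}$ by definition, so the hypothesis $L=1+\sum_i l_{e_{C_i}}$ is equivalent to $l_G=L-1$, the maximum allowed by Corollary \ref{reducible-dim'}. Each $C_i$ is a smooth rational curve by Corollary \ref{smooth component}. For the claim $m_i=1$, Lemma \ref{reducible-dim} supplies $\sum_i m_i l_{e_{C_i}}\le L-1=\sum_i l_{e_{C_i}}$, which already forces $m_i=1$ whenever $e_{C_i}\cdot e_{C_i}\ge 0$; the explicit case analysis of Proposition \ref{final} then pins down the multiplicities along the negative-self-intersection part (Lemmas \ref{caseI} and \ref{unique vertex} produce simple graphs directly, and in Lemma \ref{minimal graph} the simple-blow-down tower presents every vertex with multiplicity one).

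Next I would read off the three bullets directly from the three cases of Proposition \ref{final}. The case $\Theta_-=\emptyset$ corresponds to Lemma \ref{caseI}, yielding bullet one: $n=2$, $e_{C_1}\cdot e_{C_2}=1$, and each $e_{C_i}\cdot e_{C_i}\ge 0$. The case $\Theta_-$ nonempty with no $-1$ component corresponds to Lemma \ref{unique vertex}, yielding bullet two: a comb whose unique spike $C_1$ has self-intersection $1-n\le -2$ and is joined with label one to each of $n-1\ge 2$ identical teeth $C_i$ of self-intersection zero. The case with a $-1$ component corresponds to Lemma \ref{minimal graph}, which presents $G$ as a successive simple combinatorial blow-up of either a single smooth rational curve of non-negative self-intersection (bullet three case (1)) or the comb of bullet two (bullet three case (2)); each step in Definition \ref{co blowdown} is geometrically an infinitely near blow-up of the preceding configuration, and the self-intersection constraint in Lemma \ref{minimal graph} amounts, in case (2), to the restriction that blow-ups occur at points of $C_1$.

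For the converse I would verify $L=1+\sum_i l_{e_{C_i}}$ by direct computation in each bullet. Bullets one and two reduce to one-line expansions: in bullet one, $L=e\cdot e+1=e_{C_1}^2+e_{C_2}^2+3$ while $\sum_i l_{e_{C_i}}=e_{C_1}^2+e_{C_2}^2+2$; in bullet two, $e\cdot e=(1-n)+2(n-1)=n-1$ gives $L=n$ and $\sum_i l_{e_{C_i}}=n-1$. Bullet three is by induction on the number of blow-ups, using the equality case of Lemma \ref{dimnd} to show that each permitted simple combinatorial blow-up preserves the relation $L=1+\sum_i l_{e_{C_i}}$. The main obstacle will be the bookkeeping in bullet three case (1): the very first blow-up of a single smooth curve drops $\sum_i l_{e_{C_i}}$ by one (thereby converting the trivial identity $L=\sum_i l_{e_{C_i}}$ into the desired $L=1+\sum_i l_{e_{C_i}}$), whereas every subsequent blow-up must exactly preserve the relation, matching the ``except for the last blow-down'' phrasing in Lemma \ref{minimal graph}.
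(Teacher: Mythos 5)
Your overall route coincides with the paper's: there, Theorem \ref{cd1conf} is obtained simply by translating Proposition \ref{final} (itself a summary of Lemmas \ref{caseI}, \ref{unique vertex} and \ref{minimal graph}) into the language of subvarieties, and your reading of the three bullets, as well as your verification of the converse (direct computation for the first two bullets, induction over blow-ups via the equality case of Lemma \ref{dimnd} for the third), is in line with, and in places more explicit than, what the paper records.

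The genuine gap is in your justification that every $m_i$ equals $1$. The appeal to Lemma \ref{reducible-dim} does force $m_i=1$ whenever $e_{C_i}\cdot e_{C_i}\ge 0$, and Lemmas \ref{caseI} and \ref{unique vertex} do yield simple graphs in their cases; but the parenthetical claim that ``in Lemma \ref{minimal graph} the simple-blow-down tower presents every vertex with multiplicity one'' is not what that lemma says and does not follow from it. The proof of Lemma \ref{minimal graph} explicitly permits type (iii)$_2$ combination moves, i.e.\ type (2) simple combinatorial blow-downs of Definition \ref{co blowdown}, and such a move removes a $-1$ vertex whose multiplicity is $t_1+t_2\ge 2$; the lemma only asserts that the terminal graph $\tilde G$ is simple, not the initial or intermediate ones. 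So to conclude $m_i=1$ along $\Theta_-$ you would have to rule out type (2) blow-downs, which you do not do, and this cannot be extracted from the cited lemmas: on $\mathbb{C}\mathbb{P}^2\#3\overline{\mathbb{C}\mathbb{P}^2}$ with a Kähler $J$ obtained by blowing up a point of a fiber of the ruling and then the node of the resulting reducible fiber, the configuration $\{(H-E_1-E_2-E_3,1),(E_2-E_3,1),(E_3,2)\}$ lies in the $J$-nef class $e=H-E_1$ with $g_J(e)=0$, is connected and nef, and satisfies $\sum_i l_{e_{C_i}}=0=L-1$ while carrying a multiplicity-$2$ component; it is exactly of the ``successive infinitely near blow-up of a smooth rational curve'' type in the third bullet. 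Thus the multiplicity-one assertion is precisely the point where a bare translation of Proposition \ref{final} (the paper's one-sentence proof) does not suffice, and your proposal asserts it rather than proves it; any complete write-up must either supply an argument excluding type (2) blow-downs (which the example above shows is impossible in general) or treat the multiplicities of the negative components, in particular of the $-1$ components created at nodes, with explicit care.
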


The next example  illustrates the two cases containing a $-1$ component.  Notations are as in Example \ref{referee}.

\begin{example} 
For case (i) suppose we start off with a smooth rational  curve in class $b$.  Since $b\cdot b=0$ we have  $L=1$. 
Then the proper transform of the $b$ curve has negative self-intersection after one blow-up, so we can do any further blow-ups. However, if we start off with a smooth rational curve $C$  in class $a+b$, then the blow-ups are restricted: only the first one can be at a point meeting the curve $C$ (or its proper transform). 


For case (2) suppose we start off with the comb like configuration in Example \ref{centered graph}
with $C_1$ being the smooth curve in class $a-2b$. Then we can blow up  at two different points in $C_1$ not lying in any of the four  curves in class $b$. 
\end{example}

\begin{remark} 
By Proposition \ref{existence}, the condition $b^+(M)=1$ is automatic if $J$ is assumed to be tamed.
\end{remark}

\subsection{Tamed $J$}

In this subsection $J$ is assumed to be a tamed almost complex structure on $M$. 

Let $e$ be a class in $S_{K_J}$.  
 Recall that $S_{K_J}$ is 
the set of  $K_J-$spherical classes, defined to be  $\{e\in S|g_J(e)=0\}. $
Here $S$ is the set of homology classes which are represented by smoothly embedded spheres.

\subsubsection{Connectedness and $J-$nef class}

\begin{prop}\label{connected}
Suppose $e\in S_{K_J}$ and $\Theta=\{(C_i, m_i)\}\in \mathcal M_e$. If $e\cdot e_{C_i}\ge 0$ for each $i$, then $\Theta$ is connected and each component $C_i$ is a smooth rational curve.
\end{prop}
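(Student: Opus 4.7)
The plan is to first extract the correct nefness statement from the hypothesis, then prove connectedness by a light-cone argument, and finally read off the rational-curve conclusion from Proposition \ref{emb-comp-graph}.

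First I would observe that the hypothesis $e\cdot e_{C_i}\ge 0$ for every $i$ is exactly the condition that the weighted graph $\Theta$ is nef in the sense of Definition \ref{nef graph}. Pairing gives $e\cdot e=\sum_i m_i(e\cdot e_{C_i})\ge 0$, so by the second bullet of Proposition \ref{existence} the manifold $M$ is rational or ruled, and hence $b^+(M)=1$; this is what will make the light cone lemma available in the next step.

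For connectedness I would argue by contradiction. Suppose $\Theta=\Theta_1\sqcup\cdots\sqcup\Theta_r$ with $r\ge 2$ connected components, and set $e_j=e_{\Theta_j}$. Since distinct pieces are disjoint unions of distinct irreducible $J$-holomorphic subvarieties, positivity of intersection forces $e_j\cdot e_k=0$ for $j\ne k$. Hence $e_j\cdot e_{C_i}=e\cdot e_{C_i}\ge 0$ for every $C_i\in\Theta_j$, so $e_j^2=\sum_{C_i\in\Theta_j}m_i(e_j\cdot e_{C_i})\ge 0$. Tameness of $J$ (by some $\omega$) gives $\omega\cdot e_j>0$. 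Now the light cone lemma for $b^+=1$, applied to any two such $e_j$'s together with $e_j\cdot e_k=0$, forces $e_j^2=0$ and all $e_j$ to be positive integer multiples of a single primitive class $c$ with $c^2=0$; writing $e_j=p_j c$ one has $e=(\textstyle\sum_j p_j)c$ with $\sum_j p_j\ge 2$.

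The contradiction then comes from the arithmetic genus. Since $e^2=0$ and $g_J(e)=0$, adjunction gives $K_J\cdot e=-2$, so $K_J\cdot c=-2/\sum_j p_j$. But $c$ is an integral class with $c^2=0$, and integrality of $g_J(c)=\tfrac12 K_J\cdot c+1$ forces $K_J\cdot c$ to be an even integer. Neither $\sum_j p_j=2$ (which gives $K_J\cdot c=-1$, odd) nor $\sum_j p_j\ge 3$ (which gives $K_J\cdot c$ non-integral) is possible, so $\Theta$ must be connected. With connectedness in hand, $\Theta$ is a connected nef curve configuration with $g_J(e)=0$, and Proposition \ref{emb-comp-graph} yields $\sum_i g_J(e_{C_i})\le 0$. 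Since the adjunction inequality gives $g_J(e_{C_i})\ge 0$, each is zero, and Lemma \ref{0} concludes that every $C_i$ is a smooth rational curve. The main obstacle is the light-cone step, which is where tameness and $b^+=1$ are genuinely used to convert the purely combinatorial nefness of the graph into a rigid constraint on the classes.
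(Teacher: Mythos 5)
Your proof is correct and takes essentially the same route as the paper: establish $b^+(M)=1$ via Proposition \ref{existence}, rule out disconnectedness by a signature/light-cone argument combined with the fact that $K_J$ is characteristic, and then apply Proposition \ref{emb-comp-graph} together with the adjunction inequality and Lemma \ref{0} to conclude each component is a smooth rational curve. If anything, your handling of the square-zero case is slightly more careful than the paper's, since you only use that the disjoint pieces are proportional (from the light cone lemma) rather than homologous, and exclude this by integrality and parity of $K_J\cdot c$.
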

\begin{proof}

First observe that $e\cdot e=e\cdot \sum_i e_{C_i}\geq 0$. Hence $b^+(M)=1$ by Proposition \ref{existence}.

Suppose $\Theta$ is not connected. Since $b^+(M)=1$, and each class $e_{C_i}$ is nontrivial by Lemma \ref{non-trivial}, 
 either $\Theta$ has a connected component $D$ with 
negative self-intersection, or it consists of $p\geq 2$ homologous connected components, $D_i$, with self-intersection $0$. 

The first case is impossible since $e\cdot e_{D}=e_D\cdot e_D<0$. 

In the second case, denote $e'=e_{D_i}$. Then  $-2=K_J\cdot e=K_J\cdot p e'$. 
Thus $p=2$. But $K_J\cdot e'=1$ and $e'\cdot e'=0$, which is impossible since $K_J$ is characteristic.

Since $\Theta$ is a nef configuration, Proposition \ref{emb-comp-graph} implies each component $C_i$ is a smooth rational curve.
\end{proof}

\begin{example}\label{not connected}
In $\mathbb C \mathbb P^2\#2\overline{\mathbb C \mathbb P^2}$, if $E_1-E_2$ is $J-$effective, then the  class
$3H-2E_2$ in $S_{K_J}$ is not $J-$nef, and there is a disconnected 
curve in this class with connected components in 
$3H-E_1-E_2$ and $E_1-E_2$. 
\end{example}

\begin{proof} [Proof of Theorem \ref{emb-comp}] 
 The first bullet follows from Corollary \ref{smooth component} and  
 Corollary \ref{tree}.
 
 The second bullet follows from Proposition \ref{connected}.
\end{proof}

\begin{proof} [Proof of Corollary \ref{embsphere}]

When $e\cdot e<0$, the conclusion follows from Lemma \ref{uniqueness}.

Suppose now  that $e\cdot e\geq 0$. Observe first that $g_J(e)=0$. 
Observe also that  by Lemma \ref{irr=nef}, $e$ is $J-$nef. Hence
the conclusion follows from Proposition \ref{connected}.

\end{proof}

\subsubsection{Remarks on Theorem \ref{emb-comp} and Corollary \ref{embsphere}} \label{rk-smooth}
Examples \ref{-2K} and \ref{not connected} demonstrate that  $J-$nefness is necessary for  Theorem \ref{emb-comp}.

As mentioned in the introduction,  Theorem \ref{emb-comp} and Proposition  \ref{reducible-dim}  are  crucial in \cite{LZ12} 
in  applying  Taubes's subvarieties-current-form's approach to Donaldson's  tamed versus compatible question for an arbitrary tamed almost complex structure on rational manifolds.

Various versions  of Corollary  \ref{embsphere} have appeared in the  literature. 
When $J$ is integrable,  it is used in the classification of rational surfaces in \cite{GH}. On  page 521 in \cite{GH}, a simple  argument is given, but unfortunately, it is not correct. \footnote {Since the third term there should be $\frac{1}{2}\sum_{\nu\neq \nu'}a_{\nu'}C_{\nu}\cdot C_{\nu'}-\sum a_{\nu}+1$, which is not necessarily nonnegative.} Presumably there is a substitute for this argument, but 
we have not  been able to find out if our result is new in this case.

When $M=\mathbb C \mathbb P^2\#k\overline{\mathbb C \mathbb P^2}$ with $k\le 9$, 
it is shown  in \cite{Z} that for any tamed $J$, 
 an irreducible curve $C$ with $C\cdot C<0$ must 
be a smooth rational curve. One can easily deduce Corollary \ref{embsphere} for such manifolds from this fact.

For a generic tamed $J$, McDuff \cite{McD} provided a more intricate argument and established  several special cases,  which are essential  in characterizing  rational symplectic $4-$manifolds
in  terms of embedded symplectic spheres with positive self-intersection. 
Recently, McDuff and Opshtein  in \cite{McD1}  investigate the structure of generic  pseudo-holomorphic curves in a relative setting. The reducible $J-$holomorphic curves considered there
are limits of irreducible embedded $J'-$holomorphic curves for generic $J'$ converging to $J$,
and hence the Gromov compactness can be applied to bound the topological type of the reducible
subvariety. 
A related general remark is   that Corollary \ref{embsphere} applies  to an  arbitrary 
subvarieties in the moduli space.
If the subvariety lies in a connected component of the moduli space which contains a smooth rational curve,  one might be able to  prove the result using the Gromov compactness. 



\begin{thebibliography}{99}
\bibitem{DP} J. Demailly, M. Paun, \textit{Numerical characterization of the K\"ahler cone
of a compact K\"ahler manifold}, Ann. of Math. (2) 159 (2004),
no.3, 1247--1274.

\bibitem{GH} P. Griffiths, J. Harris, \textit{Principles of algebraic geometry}, Pure and Applied Mathematics, Wiley-Interscience [John Wiley \& Sons], New York, 1978. xii+813 pp. ISBN: 0-471-32792-1.

\bibitem{Gr} M. Gromov, \textit{Pseudoholomorphic curves in symplectic manifolds},  Invent. Math.  82  (1985),  no. 2, 307--347.

\bibitem {HLS97} H. Hofer, V. Lizan, and J.-C. Sikorav, \emph{On
genericity for holomorphic curves in four-dimensional almost-complex
manifolds}, J. Geom. Anal. 7 (1997), no. 1, 149--159.

\bibitem{LL1} B. Li, T.J. Li, \textit{Symplectic genus, minimal genus and diffeomorphisms}, Asian J. Math. 6 (2002), no. 1, 123--144.

\bibitem{LZ12} T.J. Li, W. Zhang, \textit{Almost K\"ahler forms on rational $4$-manifolds}, arXiv:1210.2377, to appear in Amer. J. Math.

\bibitem{McD}  D. McDuff, \textit{The structure of rational and ruled symplectic
$4$-manifolds}, J. Amer. Math. Soc.  3 (1990), 679--712.

\bibitem{McD1} D. McDuff, E. Opshtein, \textit{Nongeneric J-holomorphic curves and singular inflation}, arXiv:1309.6425.


\bibitem{T1} C. Taubes, \textit{Tamed to compatible: Symplectic forms via
moduli space integration}, J. Symplectic Geom. 9 (2011), 161--250.

\bibitem{Z} W. Zhang, \textit{The curve cone of almost complex $4$-manifolds}, arXiv:1501.06744.
\end{thebibliography}
\end{document}